\newcommand{\R}{\ensuremath{\mathbb{R}}}	% direct access to R
\newcommand{\Z}{\ensuremath{\mathbb{Z}}}	% direct access to Z
\newcommand{\Sp}{\ensuremath{\mathbb{S}}}	% direct access to S
\newcommand{\Xt}{X^\theta}
\newcommand{\Xs}{X^*}
\newcommand{\ES}{E_\text{sym}}
\newcommand{\EA}{E_\text{asym}}
\newcommand{\mds}{m_2^*}
\newcommand{\mts}{m_3^*}
\newcommand{\beq}{\begin{equation}}
\newcommand{\eeq}{\end{equation}}
\newcommand{\loc}{\text{loc}}
\newcommand{\defas}{\mathrel{\mathop{:}\!\!=}}  % := 
\newcommand{\with}{\;\middle\vert\;}  % for set definitions {x | ...}
\newcommand{\wto}{\xrightharpoonup{}}          % weak convergence
\newcommand{\lm}{\mathcal{L}}
\newcommand{\hm}{\mathcal{H}}
\newcommand{\co}{\mathcal{O}}
\newcommand{\so}{o}
\def\Xint#1{\mathchoice
{\XXint\displaystyle\textstyle{#1}}%
{\XXint\textstyle\scriptstyle{#1}}%
{\XXint\scriptstyle\scriptscriptstyle{#1}}%
{\XXint\scriptscriptstyle\scriptscriptstyle{#1}}%
\!\int}
\def\XXint#1#2#3{{\setbox0=\hbox{$#1{#2#3}{\int}$}
\vcenter{\hbox{$#2#3$}}\kern-.5\wd0}}
\def\dashint{\Xint-}
\DeclareMathOperator{\id}{id}
\DeclareMathOperator{\sgn}{sgn}
\newcommand{\tod}{\downarrow}
\newcommand{\tou}{\uparrow}
\newcommand{\vect}[1]{\begin{smallmatrix}#1\end{smallmatrix}}
\theoremstyle{plain}
\newtheorem{thm}{Theorem}
\newtheorem*{thm*}{Theorem}
\newtheorem{prop}{Proposition}
\newtheorem*{prop*}{Proposition}
\newtheorem{lem}{Lemma}
\newtheorem{cor}{Corollary}
\theoremstyle{definition}
\theoremstyle{plain}
\newtheorem{rem}{Remark}
\title{Asymmetric domain walls of small angle\\in soft ferromagnetic films}
\author{Lukas D\"oring\thanks{RWTH Aachen, Lehrstuhl I f\"ur Mathematik, Pontdriesch 14-16, 52056 Aachen (email: L.Doering@math1.rwth-aachen.de)} \quad Radu Ignat\thanks{Institut de Math\'ematiques de Toulouse, Universit\'e Paul Sabatier, 31062 Toulouse, France (email: Radu.Ignat@math.univ-toulouse.fr)}}
\begin{document}
\maketitle
%\tableofcontents

\begin{abstract}
We focus on a special type of domain walls appearing in the Landau-Lifshitz theory for soft ferromagnetic films. These domain walls are divergence-free $\Sp^2$-valued transition layers that connect two directions $m_\theta^\pm\in\Sp^2$ (differing by an angle $2\theta$) and minimize the Dirichlet energy. Our main result is the rigorous derivation of the asymptotic structure and energy of such ``asymmetric'' domain walls in the limit $\theta\tod 0$. As an application, we deduce that a supercritical bifurcation causes the transition from symmetric to asymmetric walls in the full micromagnetic model.

\end{abstract}
\noindent{\scriptsize\textbf{Keywords:} micromagnetics, transition layer, $\Gamma$-convergence, $\Sp^2$-harmonic maps, spectral gap, Eikonal equation.}\\
{\scriptsize\textbf{MSC:} 49S05, 49J45, 78A30, 35B32, 35B36}

\section{Introduction}\label{sec:introduction}
\subsection{Model}
We consider the following model for asymmetric domain walls: The magnetization is described by a unit-length vector field
\begin{align*}
m=(m_1, m_2, m_3) \colon \Omega \to \mathbb{S}^2,
\end{align*}
where the two-dimensional domain 
\begin{align*}
x=(x_1,x_3) \in \Omega = \R \times (-1,1)
\end{align*}
corresponds to a cross-section of a ferromagnetic sample that is parallel to the $x_1x_3$-plane. The following ``boundary conditions at $x_1=\pm\infty$'' are imposed so that a transition from the angle $-\theta$ to $\theta\in(0,\tfrac{\pi}{2}]$ is generated and a domain wall forms parallel to the $x_2x_3$-plane (see Figure \ref{fig:samplegeometry}):
\begin{align}\label{eq:bcinfty}
m(\pm\infty,\cdot) = m^\pm_\theta := (\cos\theta,\pm\sin\theta,0),
\end{align}
with the convention
\begin{align}
\label{convent}
  f(\pm\infty,\cdot) = a_\pm \iff \int_{\Omega_+} \lvert f - a_+ \rvert^2 \, dx + \int_{\Omega_-} \lvert f - a_- \rvert^2 \, dx < \infty,
\end{align}
where $\Omega_+=\Omega\cap\{x_1\geq0\}$ and $\Omega_-=\Omega\cap\{x_1\leq0\}$. Throughout the paper, we use the variables $x=(x_1,x_3)\in\Omega$ together with the differential operator $\nabla=(\partial_1 ,\partial_3 )$, and we denote by 
\[ m'=(m_1,m_3) \]
the projection of $m$ on the $x_1x_3$-plane.
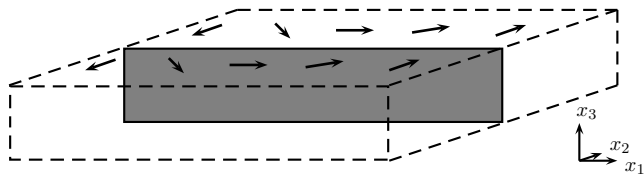
\begin{figure}[b]
\centering
\begin{pspicture}(0,0)(8,2.5)
% cross-section
\psframe[fillstyle=solid,fillcolor=gray](1.5,0.5)(6.5,1.5)
% sample
\psframe[linestyle=dashed](0,0)(5,1)
\psline[linestyle=dashed](0,1)(3,2)
\psline[linestyle=dashed](5,1)(8,2)
\psline[linestyle=dashed](5,0)(8,1)
\psline[linestyle=dashed](3,2)(8,2)
\psline[linestyle=dashed](8,1)(8,2)
% coordinate directions
\psline{->}(7.5,0)(8,0)
\rput[l](8.1,-0.1){\psscalebox{0.7}{$x_1$}}
\psline{->}(7.5,0)(7.8,0.1)
\rput[lb](7.9,0.1){\psscalebox{0.7}{$x_2$}}
\psline{->}(7.5,0)(7.5,0.5)
\rput[b](7.6,0.55){\psscalebox{0.7}{$x_3$}}
% magnetization
% top
\psline[linewidth=1pt]{<-}(2.4,1.66)(2.8,1.8)
\psline[linewidth=1pt]{<-}(3.7,1.62)(3.5,1.82)
\psline[linewidth=1pt]{->}(4.3,1.73)(4.8,1.73)
\psline[linewidth=1pt]{->}(5.3,1.7)(5.8,1.78)
\psline[linewidth=1pt]{->}(6.4,1.66)(6.8,1.8)
% bottom
\psline[linewidth=1pt]{<-}(1,1.2)(1.4,1.34)
\psline[linewidth=1pt]{<-}(2.3,1.16)(2.1,1.36)
\psline[linewidth=1pt]{->}(2.9,1.27)(3.4,1.27)
\psline[linewidth=1pt]{->}(3.9,1.24)(4.4,1.32)
\psline[linewidth=1pt]{->}(5,1.2)(5.4,1.34)
%% wall
%\psline[linestyle=dotted,linewidth=1pt](2.5,1)(5.5,2)
%\psline[linestyle=dotted,linewidth=1pt](2.5,0)(4,0.5)
%\psbezier[linestyle=dotted,linewidth=1pt](2.5,0)(2.5,0.3)(2,0.1)(2,0.5)
%\psbezier[linestyle=dotted,linewidth=1pt](2,0.5)(2,0.9)(2.5,0.7)(2.5,1)
%\psbezier[linestyle=dotted,linewidth=1pt](4,0.5)(4,0.8)(3.5,0.6)(3.5,1)
%\psbezier[linestyle=dotted,linewidth=1pt](3.5,1)(3.5,1.4)(4,1.2)(4,1.5)
\end{pspicture}
\caption{The cross-section $\Omega$ in a ferromagnetic sample on a mesoscopic level.}
\label{fig:samplegeometry}
\end{figure}

The set $X^\theta$ of all magnetization configurations of wall angle $\theta\in[0,\pi]$ is defined as
\begin{align}
X^\theta := \left\{m \in \dot{H}^1(\Omega,\Sp^2) \with m(\pm\infty,\cdot) \stackrel{\eqref{convent}}{=} m^\pm_\theta\right\}.
\end{align}
The asymmetry of ``asymmetric'' domain walls is a consequence of $m$ trying to avoid ``magnetic charges'' in the bulk and on the surface of the sample, so that no magnetic stray-field is generated via Maxwell's equations, see Remark~1(ii) below. In other words, the main feature of asymmetric walls is the flux-closure constraint
\begin{align}\label{flux-clos}
\nabla\cdot (m' \mathbf{1}_\Omega)=0 \quad \text{in} \quad \mathcal{D}'(\R^2).
\end{align}
Observe that for any $m\in\dot{H}^1(\Omega,\mathbb{S}^2)$ satisfying \eqref{flux-clos}, i.e., 
\begin{align}\label{flux-clos-classical}
\nabla\cdot m'=0 \text{ in $\Omega$} \quad \text{and} \quad m_3=0 \text{ on $\partial \Omega$},
\end{align}
there exists a unique constant angle $\theta_m\in[0,\pi]$ such that the $x_3$-average (which will always be denoted by a bar $\bar{\quad}$) satisfies
\begin{align}\label{deftheta}
  \bar{m}_1 (x_1):= \dashint_{-1}^{1} \! m_1(x_1, x_3) \, dx_3 = \cos\theta_m \quad  \text{for all } x_1\in \R.
\end{align}
Moreover, such vector fields have the property\footnote{It is a direct consequence of Poincar\'e's inequalities \eqref{eq:poincm1} and \eqref{eq:poincm3}, together with Remark \ref{rem_zero} (see below) and \cite[Lemma~3]{dioreducedmodel13}.}
\[ m'(\pm\infty,\cdot)=(\cos \theta_m, 0) \quad \text{and} \quad \lvert m_2\rvert(\pm\infty,\cdot)=\sin\theta_m \]
in the sense of \eqref{convent}. We define the set $X_0$ as the non-empty (see Proposition~\ref{prop:upperbd} below and \cite[Appendix~A]{dioreducedmodel13}) set of such configurations $m$ that additionally change sign as $\lvert x_1 \rvert \to \infty$, namely $m_2(\pm\infty,\cdot)=\pm\sin\theta_m$ in the sense of \eqref{convent}:
\begin{align}\label{defx0}
X_0 := \left\{ m\in\dot{H}^1(\Omega,\Sp^2) \with \nabla \cdot m'=0 \text{ in }\Omega,\, m_3=0 \text{ on }\partial\Omega,\,m(\pm\infty,\cdot) \stackrel{\eqref{convent}}{=} m^\pm_{\theta_m} \right\}
\end{align}

\medskip
\begin{rem}
\label{rem_zero}
\begin{enumerate}
\item Observe that if $\theta_m\in \{0,\pi\}$ for $m\in\dot{H}^1(\Omega,\Sp^2)$ with $\nabla\cdot(m'\mathbf{1}_\Omega)=0$ in $\mathcal{D}'(\R^2)$ -- in particular if $m\in X_0$ --, then $m\in\{\pm \mathbf{e}_1\}$: Indeed, since $\lvert\bar{m}_1\rvert\equiv 1$ in $\R$ and $\lvert m \rvert=1$ in $\Omega$, we deduce $\lvert m_1 \rvert \equiv 1$ and $m_2 \equiv m_3\equiv 0$ in $\Omega$.
\item Observe that the set $X_0$ as defined in \eqref{defx0} does not contain symmetric magnetization configurations $m=m(x_1)$ provided $m\not\in\{\pm \mathbf{e}_1\}$. Indeed, if $m=m(x_1)$, then \eqref{flux-clos-classical} implies $\partial_1 m_1 = 0$, i.e. $m_1\equiv\cos\theta_m$, and $m_3\equiv 0$. This is incompatible with the requirement in \eqref{defx0} that $m_2$ changes sign, unless $\theta_m\in\{0,\pi\}$, i.e. $m\in\{\pm\mathbf{e}_1\}$.
\end{enumerate}
\end{rem}

Our aim is to study the following minimization problem:\footnote{We refer to Section~\ref{sec:origineasym} for a brief motivation of $\EA(\theta)$ and to \cite{dioreducedmodel13} for its rigorous derivation from the full Landau-Lifshitz energy.}
\begin{align}\label{eq:defsffree}
\EA(\theta) \defas \min_{m\in X_0\cap X^\theta} \int_{\Omega} \lvert \nabla m \rvert^2 dx
\end{align}
For every $\theta\in [0, \pi]$, the minimum in \eqref{eq:defsffree} is indeed attained, which is essentially due to a concentration-compactness result that copes with the change of sign of $m_2(\pm\infty,\cdot)$ (see \cite[Theorem~3]{dioreducedmodel13}).
The minimizers stand for asymmetric domain walls and we are going to characterize their structure and energy as the angle $\theta\tod0$. The variational problem \eqref{eq:defsffree} closely resembles the $\Sp^2$-harmonic map problem with an additional divergence constraint.

\subsection{Asymmetric domain walls}
In the physics literature \cite{labonte69,hubert69}, two different types of asymmetric domain walls have been found via the construction of models and also numerical simulation: asymmetric Bloch walls and asymmetric N\'eel walls. These transition layers have a width that is comparable to the film thickness and ensure \eqref{flux-clos-classical} at the expense of non-vanishing $\partial_3 m$ in $\Omega$, see Remark 1(ii). This makes asymmetric walls favored over other types of transition layers only in sufficiently thick films. Both asymmetric N\'eel and Bloch walls can also be obtained numerically as critical points\footnote{Actually, we conjecture them to be (local) minimizers of $\EA(\theta)$, at least for certain ranges of wall angles $\theta$.} of a discretized $\EA(\theta)$ (cf. Figures~\ref{asym}~and~\ref{fig:asymenergy}).
\begin{figure}[ht] %
  \begin{minipage}{0.48\linewidth}
    \centering
    \includegraphics[height=4cm]{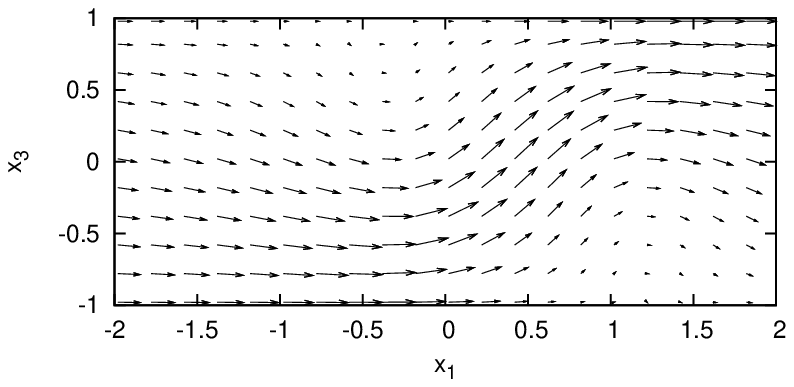} 
    \rput(3.2,2.8){
      \pscircle*[linecolor=white](0,0){0.31cm}
      \pscircle(0,0){0.3cm}
      \rput{45}(0,0){\psline(-0.3,0)(0.3,0)}
      \rput{-45}(0,0){\psline(-0.3,0)(0.3,0)}
    }
    \rput(-2.0,2.8){
      \pscircle*[linecolor=white](0,0){0.31cm}
      \pscircle(0,0){0.3cm}
      \pscircle*(0,0){0.05cm}
    }
  \end{minipage}
  \;
  \begin{minipage}{0.48\linewidth}
    \centering
    \includegraphics[height=4cm]{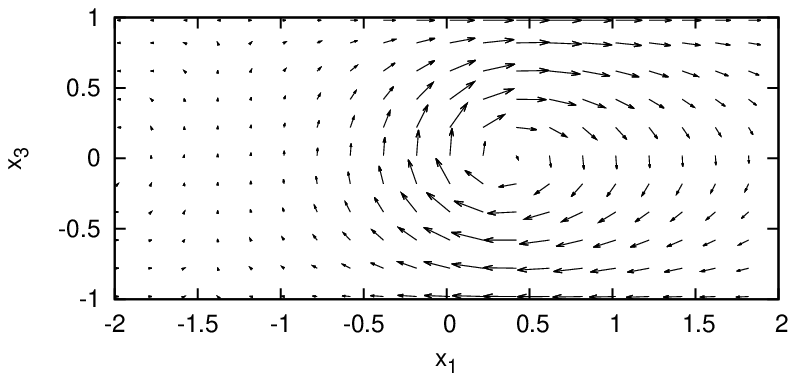} 
    \rput(3.2,2.85){
      \pscircle*[linecolor=white](0,0){0.31cm}
      \pscircle(0,0){0.3cm}
      \rput{45}(0,0){\psline(-0.3,0)(0.3,0)}
      \rput{-45}(0,0){\psline(-0.3,0)(0.3,0)}
    }
    \rput(1.35,2.85){\psscalebox{0.7}{
      \pscircle*[linecolor=white](0,0){0.31cm}
      \pscircle(0,0){0.3cm}
      \rput{45}(0,0){\psline(-0.3,0)(0.3,0)}
      \rput{-45}(0,0){\psline(-0.3,0)(0.3,0)}
    }}
    \rput(-2.0,2.85){
      \pscircle*[linecolor=white](0,0){0.31cm}
      \pscircle(0,0){0.3cm}
      \pscircle*(0,0){0.05cm}
    }
  \end{minipage}
  \vspace{-4ex}
  \caption{Asymmetric N\'eel wall (on the left) and asymmetric Bloch wall (on the right). Numerics.}
  \label{asym}
\end{figure}
\begin{figure}
\centering
\input{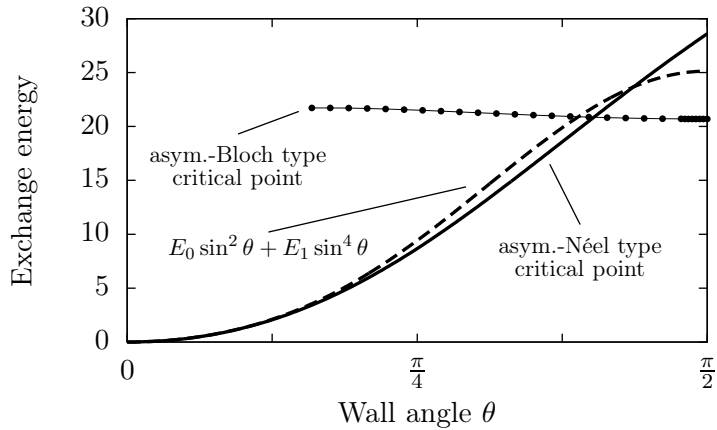}
\rput(-6.6,2.5){\psscalebox{0.8}{$E_0 \sin^2\theta + E_1 \sin^4\theta$}}
\psline[linewidth=0.5pt](-6.6,2.7)(-4.0,3.3)
\rput(-7,3.7){\psscalebox{0.8}{asym.-Bloch type}}
\rput(-7,3.4){\psscalebox{0.8}{critical point}}
\psline[linewidth=0.5pt](-7,4.0)(-6.2,4.3)
\rput(-2.5,2.5){\psscalebox{0.8}{asym.-N\'eel type}}
\rput(-2.5,2.2){\psscalebox{0.8}{critical point}}
\psline[linewidth=0.5pt](-2.5,2.8)(-2.9,3.6)
\caption{The energies of asymmetric-N\'eel (solid line) and asymmetric-Bloch (dotted line) type critical points of a discretized $\EA(\theta)$, as well as the expansion of $\EA(\theta)$ that we give in \eqref{eq:expansioneasym} (dashed line). According to the numerics, the expansion \eqref{eq:expansioneasym} is a good approximation of the energy of asymmetric N\'eel walls, with a relative error less than $15\%$ across the whole range of wall angles.}
\label{fig:asymenergy}
\end{figure}
We refer to \cite{hubertschaefer98} for experimental pictures of magnetic domains and the domain walls in-between, and to \cite{ottocrossover02} for a rigorous derivation of a precise regime in which asymmetric walls minimize the Landau-Lifshitz energy.

Judging from the models and the numerical results, there are (at least) three ways to distinguish asymmetric N\'eel from asymmetric Bloch walls:
\begin{itemize}
\item On the film surface, the asymmetric N\'eel wall rotates in a non-monotonic way (i.e., considered as a map $m\big\vert_{\partial\Omega} \colon \partial\Omega\simeq\Sp^1\to\Sp^1$, its phase is non-monotonic), while the asymmetric Bloch wall rotates monotonically; this feature is actually used to experimentally distinguish the asymmetric N\'eel wall from other wall types in images obtained by Kerr microscopy \cite[Sec.~5.5.3~(B)]{hubertschaefer98}.
\item The asymmetric N\'eel wall (up to a translation in $x_1$) is invariant under the symmetry $x\mapsto -x$, $m_2 \mapsto -m_2$, while (except for the special case $\theta=\tfrac{\pi}{2}$) the asymmetric Bloch wall does not respect any of the symmetries of the energy functional $\EA(\theta)$.
\item Defining the winding number $W := \deg(m\big\vert_{\partial\Omega}\colon \Sp^1 \to \Sp^1)$, the asymmetric N\'eel wall has a trivial $W=0$, while the asymmetric Bloch wall satisfies $\lvert W \rvert=1$ (see also Section~\ref{sec:topology}).
\end{itemize}

Since the magnetization of an asymmetric Bloch wall points into the opposite direction on the top film surface with respect to the bottom surface, it is expected to be energetically more costly than the asymmetric N\'eel wall when sufficiently strong magnetic fields are applied along the $x_1$ direction, i.e. when the wall angle $\theta$ decreases from $\tfrac{\pi}{2}$ to $0$ (cf. \cite{berkovetal93}, the quotation from \cite{hubertschaefer98} in Section~\ref{sec:origineasym}, and Figure~\ref{fig:asymenergy}).

\subsection{Main results}
Our main goal is to establish the following asymptotic expansion of $\EA$ in the wall angle~$\theta$:
\begin{align}
  E_\text{asym}(\theta) = C_0 \, \theta^2 + C_1 \, \theta^4 + \so(\theta^4) \quad \text{as }\theta\tod 0,\label{eq:expansionC}
\end{align}
with some positive constants $C_0>0$ and $C_1>0$ that we compute explicitly.
In fact it will turn out to be more convenient to expand $\EA$ in terms of $\sin\theta$, i.e. derive  
\begin{align}\label{eq:expansioneasym}
  E_\text{asym}(\theta) = E_0 \, \sin^2\theta + E_1 \, \sin^4\theta + \so(\theta^4) \quad \text{as }\theta\tod 0.
\end{align}
Both expansions are related via $C_0=E_0$ and $C_1=-E_0/3+E_1$. Our method is based on an asymptotic development by $\Gamma$-convergence. In deriving \eqref{eq:expansioneasym}, we will obtain an asymptotic expansion up to order $\so(\theta^2)$ also of minimizers $m_\theta$ of $\EA(\theta)$. The expansion indicates that the variational problem \eqref{eq:defsffree} has -- up to translation and for small $0<\theta\ll 1$ -- exactly two global minimizers that are related by the reflection $x_3\leadsto-x_3$, $m_3\leadsto-m_3$. Moreover, both minimizers rotate non-monotonically on the sample surface, satisfy -- at least up to order $\theta^2$ -- the symmetry $x\leadsto-x$, $m_2\leadsto -m_2$, and are topologically trivial, see Proposition~\ref{prop:topology}.

In conjunction with a reduced model for extended tails of asymmetric domain walls that was derived in \cite{dioreducedmodel13}, the asymptotic expansion \eqref{eq:expansionC} allows us to prove that symmetric N\'eel walls turn into their asymmetric variant as the global wall angle increases via a supercritical bifurcation (see Sections~\ref{sec:origineasym}~and~\ref{sec:crossover} for details).

\subsubsection*{The leading-order coefficient $E_0$}
The leading-order coefficient in \eqref{eq:expansioneasym} is obtained as a consequence of deriving the asymptotic behavior of a minimizer $m_\theta$ of $\EA(\theta)$ up to order $\theta^2$ (cf. Lemma~\ref{lem:compsecondleadingm3} below):
\begin{align*}
m_\theta=\Bigl(\begin{smallmatrix}\cos \theta\\\sin\theta \, m_2^*\\0\end{smallmatrix}\Bigr)+\co(\theta^2) \quad \text{as } \theta\tod 0,
\end{align*}
where $m_2^*$ is a minimizing transition layer of the variational problem
\begin{align}\label{eq:defe0}
  E_0 = \min_{f\in \Xs} \int_\Omega \bigl\lvert \nabla f \bigr\rvert^2 dx.
\end{align}
The set of admissible configurations is defined as
\begin{align}
\label{defXstar}
\Xs=\left\{ f \in \dot{H}^1(\Omega, \R) \, \with \, f(\pm \infty, \cdot) \stackrel{\eqref{convent}}{=} \pm 1,\, \dashint_{-1}^1 \! f^2(\cdot, x_3) \, dx_3 = 1,\, \bar{f}(0)=0\right\}.
\end{align}
Observe that due to translation invariance of the minimization problem \eqref{eq:defe0} and the boundary conditions of admissible $f$ at $x_1=\pm\infty$, the constraint $\bar{f}(0)=\dashint_{-1}^1 \! f(0,x_3) \, dx_3 = 0$ is not a restriction.
The requirement that the average $\overline{f^2} \equiv 1$ follows from the property $\bar{m}_1 \equiv \cos\theta_m$ of any $m\in X_0$ by letting $\theta\tod 0$.

By matching upper and lower bounds on $\EA(\theta)$ in the spirit of $\Gamma$-convergence at the level of minimizers we prove:

\medskip
\begin{thm}\label{thm:mine0}
The leading-order coefficient of $\EA$ is given by $E_0$ as defined in \eqref{eq:defe0}, i.e.
\begin{align*}
\lim_{\theta \tod 0} \bigl(\sin^{-2}\theta \;\EA(\theta)\bigr) = E_0.
\end{align*}
Problem \eqref{eq:defe0} has exactly two minimizers, determined by:
\begin{align}\label{eq:defm2star}
  m_2^*(x) = \tanh(\tfrac{\pi}{2} x_1) + \sigma \sqrt{2} \sin(\tfrac{\pi}{2} x_3) \sqrt{1-\tanh^2(\tfrac{\pi}{2} x_1)}, \quad x=(x_1, x_3)\in \Omega,
\end{align}
corresponding to the choice of $\sigma \in \{\pm 1\}$. Moreover, one computes $E_0 = 4\pi$.
\end{thm}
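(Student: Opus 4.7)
My plan is to first solve \eqref{eq:defe0} explicitly by decomposing $f\in\Xs$ in the $x_3$-direction (giving (ii) and (iii)), then match upper and lower bounds on $\sin^{-2}\theta\,\EA(\theta)$ in the spirit of $\Gamma$-convergence to deduce (i). I would expand $f\in\Xs$ in the Neumann eigenbasis of $-\partial_3^2$ on $(-1,1)$: $\phi_0\equiv 1$, $\phi_1(x_3)=\sqrt2\sin(\tfrac{\pi}{2}x_3)$, and $\phi_k$ ($k\geq 2$) with eigenvalues $\lambda_k\geq\pi^2>\lambda_1=\tfrac{\pi^2}{4}$, normalised so that $\overline{\phi_j\phi_k}=\delta_{jk}$. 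Writing $f=\sum_k f_k(x_1)\phi_k(x_3)$, the constraint $\overline{f^2}\equiv 1$ becomes $\sum_k f_k^2\equiv 1$ and the Dirichlet energy reads $2\sum_k\int_\R[(f_k')^2+\lambda_k f_k^2]\,dx_1$. Discarding $(f_k')^2\geq 0$ for $k\geq 2$ and using $\lambda_k\geq\lambda_1$ together with $\sum_{k\geq 1}f_k^2=1-f_0^2$, this is bounded below by $2\int_\R[(f_0')^2+(f_1')^2+\lambda_1(1-f_0^2)]\,dx_1$, with equality iff $f_k\equiv 0$ for $k\geq 2$. Under this equality $f_0^2+f_1^2\equiv 1$; parametrising $f_0=\cos\beta$, $f_1=\sigma\sin\beta$ ($\sigma\in\{\pm1\}$) with $\beta(-\infty)=\pi$, $\beta(+\infty)=0$ and $\bar f(0)=0\Rightarrow\beta(0)=\tfrac{\pi}{2}$, the reduced energy is the classical sine-Gordon functional $2\int_\R[(\beta')^2+\lambda_1\sin^2\beta]\,dx_1$. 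A Bogomolnyi/AM-GM bound yields
\begin{equation*}
2\int_\R\bigl[(\beta')^2+\lambda_1\sin^2\beta\bigr]\,dx_1 \;\geq\; 4\sqrt{\lambda_1}\int_0^\pi\sin\beta\,d\beta \;=\; 8\sqrt{\lambda_1}\;=\;4\pi,
\end{equation*}
with equality exactly for $\beta'=-\sqrt{\lambda_1}\sin\beta$, whose unique solution with $\beta(0)=\tfrac{\pi}{2}$ is $\cos\beta=\tanh(\tfrac{\pi}{2}x_1)$, $\sin\beta=\sqrt{1-\tanh^2(\tfrac{\pi}{2}x_1)}$. This produces precisely the two minimizers in \eqref{eq:defm2star} (indexed by $\sigma$) and identifies $E_0=4\pi$.

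\emph{Upper bound.} For $\limsup_{\theta\tod 0}\sin^{-2}\theta\,\EA(\theta)\leq E_0$, I would construct a competitor $m_\theta\in X_0\cap\Xt$ from $m_2^*$ by setting $m_{\theta,2}\defas\sin\theta\,m_2^*$ and $m_{\theta,3}\defas\sin^2\theta\,w$, where $w(x_1,x_3)\defas\int_{-1}^{x_3}m_2^*\,\partial_1 m_2^*\,dt$. The identity $\overline{(m_2^*)^2}\equiv 1$ (equivalently $\partial_1\overline{(m_2^*)^2}\equiv 0$) guarantees $w(\cdot,\pm 1)\equiv 0$, so $m_{\theta,3}|_{\partial\Omega}=0$; taking $m_{\theta,1}\defas\sqrt{1-m_{\theta,2}^2-m_{\theta,3}^2}$ enforces $|m_\theta|=1$. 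By construction $\partial_1 m_{\theta,1}+\partial_3 m_{\theta,3}$ vanishes at leading order $\sin^2\theta$; the $O(\sin^4\theta)$ remainder is absorbed by a small perturbative correction of $w$ (standard implicit-function argument around the eikonal $|\nabla\psi|=1$) without affecting the leading energy. A direct computation then gives $\int_\Omega|\nabla m_\theta|^2\,dx = \sin^2\theta\,E_0 + O(\sin^4\theta)$.

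\emph{Lower bound and main obstacle.} Let $m_\theta$ minimize $\EA(\theta)$; the upper bound yields $\sin^{-2}\theta\int|\nabla m_\theta|^2\leq C$, so $f_\theta\defas m_{\theta,2}/\sin\theta$ is bounded in $\dot H^1(\Omega)$. Up to an $x_1$-translation enforcing $\bar f_\theta(0)=0$ and a subsequence, $f_\theta\wto f^*$ weakly in $\dot H^1$ and strongly in $L^2_{\loc}$; the boundary conditions pass via \eqref{convent} and weak lower semicontinuity gives $\liminf_{\theta\tod 0}\sin^{-2}\theta\int|\nabla m_\theta|^2\geq\int|\nabla f^*|^2$. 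The central obstacle is verifying $f^*\in\Xs$, i.e.\ the nonlinear constraint $\overline{(f^*)^2}\equiv 1$. I would handle this via the stream function $\psi_\theta$ associated with the divergence-free field $m_\theta'$ (with $\psi_\theta(\cdot,\mp1)=\pm\cos\theta$), which satisfies the eikonal-type identity $|\nabla\psi_\theta|^2 = 1-\sin^2\theta\,f_\theta^2$. Writing $\psi_\theta=-\cos\theta\cdot x_3+\sin^2\theta\,u_\theta$ with $u_\theta|_{x_3=\pm1}=0$, expansion of the eikonal to leading order yields $-2\partial_3 u^*=1-(f^*)^2$; integration in $x_3$ from $-1$ to $1$ together with $u^*(\cdot,\pm1)=0$ forces $\overline{(f^*)^2}\equiv 1$. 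Hence $f^*\in\Xs$, $\int|\nabla f^*|^2\geq E_0$, and combined with the upper bound this completes the proof of (i).
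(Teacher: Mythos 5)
Your identification of minimizers of $E_0$ contains a genuine gap. After the Fourier decomposition you \emph{discard} $(f_k')^2$ for $k\geq 2$ and arrive at the lower bound
\[
\int_\Omega|\nabla f|^2\,dx \geq L(f_0,f_1):=2\int_\R\bigl[(f_0')^2+(f_1')^2+\lambda_1(1-f_0^2)\bigr]\,dx_1.
\]
You then apply the Bogomolnyi bound after parametrising $(f_0,f_1)=(\cos\beta,\sigma\sin\beta)$. But that parametrisation needs $f_0^2+f_1^2\equiv 1$, whereas the constraint $\overline{f^2}\equiv 1$ only gives $f_0^2+f_1^2\leq 1$; the pair $(f_0,f_1)$ is \emph{not} confined to the circle. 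In fact, the unconstrained infimum of $L$ over admissible $(f_0,f_1)$ is $\pi^2<4\pi$ (take $f_1\equiv 0$ and $f_0$ the Modica--Mortola ``sine'' profile), so $L\geq 4\pi$ is false and your chain does not establish $E_0\geq 4\pi$. The fix is exactly what the paper does (Proposition \ref{prop:minofe0}, Step 2): do \emph{not} discard the higher derivative terms but retain them through the Cauchy--Schwarz inequality
\[
\sum_{k\geq 1}(f_k')^2 \;\geq\; \bigl|\partial_1 g\bigr|^2,\qquad g:=\Bigl(\sum_{k\geq1}f_k^2\Bigr)^{1/2}=\sqrt{1-f_0^2},
\]
which gives a lower bound $2\int[(f_0')^2+(g')^2+\lambda_1g^2]$ in which $f_0^2+g^2\equiv 1$ is now built in, and only then does the Bogomolnyi (equivalently, the paper's Young's-inequality) argument yield $4\pi$ with the stated equality cases.

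There are two further issues, less central but worth flagging. \emph{Upper bound:} fixing $m_{\theta,2}:=\sin\theta\,m_2^*$ \emph{exactly} and only ``correcting $w$'' cannot produce an admissible competitor: a divergence-free $m_\theta'$ with $|m_\theta|=1$ and this second component forces the stream function to solve $|\nabla\psi|^2=1-\sin^2\theta\,(m_2^*)^2$ with \emph{both} boundary conditions on $\partial\Omega$, and Section \ref{sec:upper} of the paper shows this eikonal problem has no solution. The correction must also perturb $m_{\theta,2}$; this is why the paper's construction blends three characteristics-based solutions rather than invoking an implicit function theorem around the eikonal. \emph{Lower bound and $\overline{(f^*)^2}\equiv 1$:} carrying out your eikonal expansion honestly yields $-2\partial_3 u^* = 1-(f^*)^2-(m_3^*)^2$ with $m_3^*:=\lim m_{\theta,3}/\sin\theta$, so integration in $x_3$ gives $\overline{(f^*)^2}+\overline{(m_3^*)^2}=1$. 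To drop the second term you still need $m_3^*\equiv 0$, which in the paper is a separate step (Lemma \ref{lem:compleadingm1}, Step 3, from passing to the limit in $\nabla\cdot m_\theta'=0$ and the boundary trace). Similarly, the boundary conditions $f^*(\pm\infty,\cdot)=\pm 1$ do not pass ``via \eqref{convent}'' for free; they require the concentration-compactness lemma and the argument of Lemma \ref{lem:compleadingm1}, Step 5, which uses $\overline{(f^*)^2}\equiv 1$ as an input.
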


The above theorem already justifies the physical prediction for the asymmetric N\'eel wall: First of all, observe that $m_2^*$ is a non-monotonic function on the surface $\partial\Omega=\{|x_3|=1\}$, so that the same behavior is conserved by the second component of the asymmetric N\'eel wall. Second, observe that $m_2^*$ is odd with respect to the origin, so that the second component of an asymmetric N\'eel wall approximately conserves the same symmetry. Indeed, by Lemma~\ref{lem:compsecondleadingm3} $m_{2,\theta}^*\to m_2^*$ in $\dot{H}^1(\Omega)$, in particular uniformly on a.e. vertical line $\{ x_1=a \}$. Due to the symmetry $x\leadsto -x$, $m_2\leadsto -m_2$ , we expect that for small $0<\theta\ll1$ the variational problem \eqref{eq:defsffree} has only two global minimizers $m_\theta$.

\subsubsection*{Second-leading order coefficient $E_1$}
The second-leading order coefficient of the asymmetric-wall energy is obtained by expanding the minimizer $m_\theta$ of $\EA(\theta)$ to the next order:
\begin{gather}\label{asym_doi}
\left.\begin{aligned}
m'_\theta &=\bigl(\begin{smallmatrix}\cos \theta\\0\end{smallmatrix}\bigr)+\sin^2\theta\,  \hat{m}' + \so(\theta^2)\\
m_{2,\theta} &= \sin\theta \,  m^*_2 + \so(\theta^2)
\end{aligned} \quad \right\} \quad \text{in $\dot{H}^1(\Omega)$ as } \theta \tod 0,
\end{gather}
where $m_2^*$ is a minimizing transition layer of $E_0$ (cf. \eqref{eq:defm2star}) and $\hat{m}'$ is given by
\begin{align}\label{eq:defhatm}
\hat{m}_1 := \tfrac{1-(m^*_2)^2}{2} \quad \text{and} \quad \partial_3 \hat{m}_3 := -\partial_1  \hat{m}_1 = \partial_1  \tfrac{(m^*_2)^2}{2} \quad \text{in }\Omega.
\end{align} 
Note that $\hat{m}_3$ is indeed uniquely determined due to its boundary condition $\hat{m}_3=0$ on $\partial \Omega$.
Using the expansion \eqref{asym_doi} and the fact that $m_2^*$ minimizes \eqref{eq:defe0}, one computes that the expansion \eqref{eq:expansioneasym} of $\EA(\theta)$ holds with the exact constant (see Section~\ref{sec:valueofe1})
\begin{align}\label{eq:defe1}
E_1 = \int_\Omega \Bigl( \lvert \nabla \hat{m}' \rvert^2 - \mu(x_1)\,\lvert \hat{m}' \rvert^2 \Bigr)  dx =  \tfrac{148}{35} \pi,
\end{align}
where $\mu\colon\R\to\R$ is the Lagrange multiplier corresponding to the constraint $\dashint_{-1}^1 f^2 dx_3 = 1$ in \eqref{eq:defe0}. It is given by
\begin{align}\label{eq:defmu}
\mu(x_1) = \dashint_{-1}^1 \lvert \nabla m^*_2 (x_1, x_3)\rvert^2 dx_3 = \pi \tfrac{d}{dx_1} \ell(x_1) = \tfrac{\pi^2}{2}\bigl(1-\ell^2(x_1)\bigr),
\end{align}
with
\begin{align}\label{eq:defell}
\ell(x_1) := \tanh(\tfrac{\pi}{2}x_1) \quad \text{for }x_1\in\R.
\end{align}
We again rigorously establish the second-order term of the expansion \eqref{eq:expansioneasym} by finding matching upper and lower bounds on the energy $\EA(\theta)$. 

\medskip
\begin{thm}\label{thm:mine1}
The second-leading order coefficient of $\EA$ is given by $E_1$ as in \eqref{eq:defe1}, i.e.
\begin{align*}
\lim_{\theta\tod 0} \Bigl( \sin^{-4}\theta \bigl(\EA(\theta) - E_0\sin^2\theta \bigr)\Bigr) = E_1.
\end{align*}
Moreover, if $\{m_\theta\}_\theta$ is a family of minimizers of $\{\EA(\theta)\}_\theta$, then the asymptotic expansion up to the second order \eqref{asym_doi} holds in $\dot{H}^1(\Omega)$. That is, up to a translation in $x_1$-direction and a subsequence we have
\begin{align*}
\tfrac{1}{\sin^2 \theta} \biggl(m_\theta-\biggl(\begin{smallmatrix}\cos \theta+\sin^2\theta \, \hat{m}_1\\\sin \theta \, \mds\\\sin^2\theta \, \hat{m}_3\end{smallmatrix}\biggr) \biggr)\to 0 \quad \text{strongly in } \dot{H}^1(\Omega) \text{ as } \theta\tod 0,
\end{align*}
where $\hat{m}'$ is defined in \eqref{eq:defhatm} for one of the two minimizers $\mds$ given in \eqref{eq:defm2star}. 
\end{thm}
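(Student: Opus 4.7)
The plan is to prove Theorem~\ref{thm:mine1} via a second-order $\Gamma$-development, matching an upper bound from an explicit competitor and a lower bound based on expansion around the ansatz plus coercivity. The common object on both sides is
\[
A_\theta := \bigl(\cos\theta+\sin^2\theta\,\hat m_1,\;\sin\theta\, m_2^*,\;\sin^2\theta\,\hat m_3\bigr)^T,
\]
which by construction is in-plane divergence-free on $\Omega$, vanishes in $x_3$-component on $\partial\Omega$, and satisfies $|A_\theta|^2 = 1 + O(\sin^4\theta)$.

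\emph{Upper bound.} An admissible competitor $\tilde A_\theta \in X_0\cap X^\theta$ is built by pointwise projection of $A_\theta$ onto $\mathbb S^2$ (replacing the first component by $\sqrt{1-\sin^2\theta\,(m_2^*)^2 - \sin^4\theta\,\hat m_3^2}$) together with an $O(\sin^4\theta)$ correction to $\hat m_3$ that restores $\nabla\cdot\tilde A'_\theta=0$ and $\tilde A_{\theta,3}|_{\partial\Omega}=0$. Taylor-expanding $|\nabla\tilde A_\theta|^2$ in powers of $\sin^2\theta$ and integrating, the $\sin^2\theta$-term gives $E_0$. For the $\sin^4\theta$-term, the direct kinetic contribution $\int_\Omega|\nabla\hat m'|^2\,dx$ combines with a cross term from the sphere-projection; integration by parts---using the natural Neumann-type condition $\partial_3 m_2^*|_{\partial\Omega}=0$ inherited from the explicit form \eqref{eq:defm2star} and the Euler-Lagrange equation $-\Delta m_2^* = \mu(x_1)\,m_2^*$ valid because $m_2^*$ minimizes \eqref{eq:defe0} under the constraint $\overline{f^2}\equiv 1$---converts the cross term into $-\int_\Omega\mu\,|\hat m'|^2\,dx$, producing the sharp bound $\EA(\theta) \le E_0\sin^2\theta + E_1\sin^4\theta + o(\sin^4\theta)$.

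\emph{Compactness and matching lower bound.} Let $\{m_\theta\}$ be minimizers. By Theorem~\ref{thm:mine0} and its compactness lemma, after $x_1$-translation and extraction of a subsequence, $\sin^{-1}\theta\,\bigl(m_\theta-(\cos\theta,\,\sin\theta\, m_2^*,\,0)^T\bigr)\to 0$ in $\dot H^1$ for one of the two $m_2^*$ in \eqref{eq:defm2star}. Setting $\phi_\theta := \sin^{-2}\theta\,(m_\theta-A_\theta)$ and expanding
\[
|\nabla m_\theta|^2 = |\nabla A_\theta|^2 + 2\sin^2\theta\,\nabla A_\theta\cdot\nabla\phi_\theta + \sin^4\theta\,|\nabla\phi_\theta|^2,
\]
the cross term---nominally of order $\sin^3\theta$---is reduced to order $\sin^4\theta$ by combining the pointwise unit-length constraint $|m_\theta|^2\equiv 1$ with the mean constraint $\bar m_{\theta,1}\equiv\cos\theta$ (which together force $\overline{m_2^*\phi_{\theta,2}}(x_1) = O(\sin\theta)$) and the Euler-Lagrange equation for $m_2^*$. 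This yields
\[
\int_\Omega|\nabla m_\theta|^2\,dx \ge E_0\sin^2\theta + E_1\sin^4\theta + \sin^4\theta\int_\Omega|\nabla\phi_\theta|^2\,dx + o(\sin^4\theta).
\]
Matching with the upper bound forces $\int|\nabla\phi_\theta|^2\,dx\to 0$, giving simultaneously the sharp expansion of $\EA$ and the strong $\dot H^1$-convergence $\phi_\theta\to 0$. Identifying the weak limit as zero is automatic from the linearized constraints: $|m_\theta|^2=1$ forces $\phi_1 = -m_2^*\phi_2+o(1)$ pointwise, $\nabla\cdot m'_\theta=0$ together with $\phi_3|_{\partial\Omega}=0$ forces $\phi_3\equiv 0$, and the translation-normalization $\bar f(0)=0$ of $X^*$ eliminates the only zero mode.

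\emph{Main obstacle.} The technically most delicate step is the coercivity underlying the lower bound: one must use the linearized constraints to reduce the nominally $O(\sin^3\theta)$ cross term to $O(\sin^4\theta)$ and then identify the resulting quadratic form as positive definite on admissible perturbations. Equivalently this amounts to a spectral-gap statement for the Jacobi operator $-\Delta-\mu(x_1)$ at $m_2^*$ on the class of perturbations compatible with zero in-plane divergence, vanishing third component on $\partial\Omega$, and the linearized unit-length condition---the hardest coupling being the interaction between $\phi'$ and $\phi_2$ via the unit-length constraint. Isolating and killing the translation-invariance zero mode via the normalization $\bar f(0)=0$ (cf.\ \eqref{defXstar}) is the remaining bookkeeping challenge that closes the argument.
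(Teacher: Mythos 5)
Your decomposition differs from the paper's: you subtract off the full second-order profile $A_\theta=(\cos\theta+\sin^2\theta\,\hat m_1,\sin\theta\,m_2^*,\sin^2\theta\,\hat m_3)$, whereas the paper subtracts only the first-order profile $(\cos\theta,\sin\theta\,m_2^*,0)$, defines $\hat m_\theta$ as the full second-order deviation, and derives an \emph{exact} algebraic identity (Lemma~\ref{lem:calc_average_second}) showing $\int|\nabla m_\theta|^2 = E_0\sin^2\theta + \sin^4\theta\,B(\hat m_\theta,\hat m_\theta)$. That identity, obtained by combining $|m_\theta|\equiv 1$ with the Euler--Lagrange equation of $E_0$, is what lets the paper avoid the delicate bookkeeping of cross terms that your plan relegates to an asymptotic expansion. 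The lower-bound part of your plan (spectral gap for the Jacobi operator, killing the translation zero mode, interpolation control) is in the right spirit and would plausibly lead to the same end, although you understate how much work goes into the paper's Lemmas~\ref{lem:compleadingm1}--\ref{lem:hessest}: in particular, getting $L^2(\Omega,\mu\,dx)$-control of $\hat m_{3,\theta}$ requires the $L^1(\Omega,\mu\,dx)$ bound on $\partial_1\hat m_{1,\theta}$ and an interpolation inequality, and the Hessian $B$ is a priori not signed, so the spectral gap must be applied to a suitably projected quantity.

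The genuine gap is in your upper bound. You propose to keep $m_2=\sin\theta\,m_2^*$ exactly, fix $m_1=\sqrt{1-\sin^2\theta(m_2^*)^2-\sin^4\theta\hat m_3^2}$, and restore $\nabla\cdot\tilde A_\theta'=0$ and $\tilde A_{\theta,3}|_{\partial\Omega}=0$ by an $O(\sin^4\theta)$ correction to $\hat m_3$. This cannot work, and not merely for technical reasons: the paper shows (Section~\ref{sec:upper}, strategy discussion) that \emph{no} configuration in $X_0\cap X^\theta$ can have $m_2=\sin\theta\,m_2^*$ exactly. Once $m_2=\sin\theta\,m_2^*$ is fixed, the stream function $\psi$ of $m'$ would have to solve $|\nabla\psi|^2=1-\sin^2\theta(m_2^*)^2$ with $\psi=0$ on $\{x_3=-1\}$ and $\psi=-2\cos\theta$ on $\{x_3=1\}$; averaging in $x_3$ and using the normalization $\dashint(m_2^*)^2\,dx_3\equiv 1$ from $X^*$ forces $\dashint|\nabla\psi|^2\,dx_3=\cos^2\theta=(\dashint\partial_3\psi\,dx_3)^2$, hence $\partial_1\psi\equiv 0$ and $\psi=-\cos\theta(x_3+1)$, which contradicts the eikonal equation since $(m_2^*)^2\not\equiv 1$. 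Any correction to $\hat m_3$ alone that restores the divergence constraint will simultaneously break the unit-length constraint (and conversely), and no iteration of $O(\sin^4\theta)$ corrections can escape the rigidity, because the obstruction is a non-perturbative consequence of $\dashint(m_2^*)^2\equiv 1$. This is precisely why the paper has to allow $m_2$ to deviate from $\sin\theta\,m_2^*$ by a small amount and constructs the competitor by solving the modified eikonal equation by the method of characteristics for three separate pieces of boundary data, then blends them; establishing the energy estimate for that blended competitor is the content of all of Section~\ref{sec:upper} and the Appendix. Your sketch omits this entire difficulty, so the upper bound $\EA(\theta)\le E_0\sin^2\theta+E_1\sin^4\theta+o(\sin^4\theta)$ is not established by your argument.
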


From Theorems~\ref{thm:mine0}~and~\ref{thm:mine1} 
one finally deduces the expansion \eqref{eq:expansionC} of $\EA(\theta)$ in terms of $\theta$ instead of $\sin\theta$:
\[ \EA(\theta) = 4\pi\,\theta^2 + \tfrac{304}{105}\pi \, \theta^4 + \so(\theta^4). \]

\subsection{Topological properties of small-angle walls}\label{sec:topology}
There are (at least) two ways of defining topological invariants for smooth vector fields $m \in X_0$:
\begin{itemize}
\item Since $m\big\vert_{\partial\Omega}\colon \partial\Omega \approx \Sp^1 \to \Sp^1$, we can define a \emph{winding number} $W(m)= \deg(m\big\vert_{\partial\Omega})$.
\item Since $m_3=0$ on $\partial\Omega$, we can extend $m$ from $\Omega$ to $\tilde{\Omega}:=\R\times[-3,1)$ by even reflection in $(m_1,m_2)$ and odd reflection in $m_3$. The reflected configuration remains divergence free.
Identifying $\Omega\subset\tilde{\Omega}$ with the upper hemisphere of $\Sp^2$ and the reflected version $\tilde{\Omega}\setminus\Omega$ with the lower hemisphere, we see that (due to the boundary conditions at $x_1=\pm\infty$) the extension $\tilde{m} \colon \tilde{\Omega}\to \Sp^2$ of $m$ induces a vector field $\tilde{m}\colon \Sp^2 \to \Sp^2$, to which we can associate a \emph{degree} $D(m)=\deg(\tilde{m})$.
\end{itemize}
Both degrees can be represented in terms of the Jacobian determinants
\begin{align*}
J(\vect{m_1\\m_2}) = \partial_1 (\vect{m_1\\m_2}) \times \partial_3 (\vect{m_1\\m_2}),\quad \omega(m) = m\cdot (\partial_1 m \times \partial_3 m).
\end{align*}
We have (note \cite[Rem.~1.5.10]{nirenberg74})
\begin{align}
W(m) = \!\!\!\!\!\sum_{x \in \left(\vect{m_1\\m_2}\right)^{-1}\left(\vect{y_1\\y_2}\right)} \!\!\!\!\!\!\!\! \sgn\bigl(J(\vect{m_1\\m_2})(x)\bigr) \in \Z,\label{def:W}
\end{align}
for any regular value $\bigl(\vect{y_1\\y_2} \bigr)$ of $\left(\vect{m_1\\m_2}\right)$, where $\sgn(a)$ denotes the sign of $a\neq 0$. The integer $W$ is independent of the choice of the regular value $\bigl(\vect{y_1\\y_2} \bigr)$ (cf. \cite[Prop.~1.4.1]{nirenberg74}).

Moreover, for any two regular values $y$ and $z$ of $m\colon\Omega\to\Sp^2$ with $y_3 > 0$ and $z_3<0$, we have
\begin{align}
D(m) = \sum_{x \in m^{-1}(y)} \sgn\bigl(\omega(m)(x)\bigr) + \sum_{x \in m^{-1}(z)} \sgn\bigl(\omega(m)(x)\bigr) \in \Z.\label{def:D}
\end{align}
The sums are constant in $y$ and $z$, respectively (cf. \cite[Prop.~1.4.1]{nirenberg74}).

With these definitions the following relation holds (see also \cite[pg.~1021]{kmms14}):
\begin{align*}
W(m)\equiv D(m) \pmod 2.
\end{align*}
Furthermore, recall that there are alternative characterizations of $W$ and $D$ available by integrating $J$ and $\omega$, respectively:
\begin{align*}
\Z \ni W(m) &= \tfrac{1}{\pi} \int_\Omega J(\vect{m_1\\m_2}) \, dx = \tfrac{1}{\pi} \int_{\partial\Omega} m_1\partial_\tau m_2 \, d\hm^1(x),\\
\Z \ni D(m) &= \tfrac{1}{2\pi} \int_\Omega \omega(m) \, dx.
\end{align*}
The winding number $W$ is a classical quantity in the study of Ginzburg-Landau vortices \cite{bbh94} and has been generalized also to less regular vector fields \cite{brezisnirenberg95,boutetdemonvelgeorgescupurice91}.

By Young's inequality both $W(m)$ and $D(m)$ relate topological properties of $m$ to its exchange energy:
\begin{align}\label{eq:WDEineq}
\lvert W(m) \rvert \leq \tfrac{1}{2\pi} \int_\Omega \lvert \nabla m \rvert^2 dx, \quad \lvert D(m) \rvert \leq \tfrac{1}{4\pi} \int_\Omega \lvert \nabla m \rvert^2 dx.
\end{align}
Finally, note that an odd reflection in $m_3$ and even reflection in $(m_1,m_2)$ across one of the components of $\partial\Omega$, like in the definition of $\tilde{m}\colon\tilde{\Omega}\to\Sp^2$, sets $W$ to zero in $\tilde{\Omega}$, while $D$ is doubled in $\tilde{\Omega}$.

With these definitions, one immediately obtains:

\medskip
\begin{prop}\label{prop:topology}
For $0<\theta\ll 1$, any global minimizer $m_\theta$ of $\EA(\theta)$ is topologically trivial in the sense that $m$ has vanishing winding number and degree, i.e. $W(m_\theta)=D(m_\theta)=0$.
\end{prop}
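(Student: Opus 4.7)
The plan is to combine the energy expansion from Theorem~\ref{thm:mine0} with the topological-energy inequalities~\eqref{eq:WDEineq} and exploit the integrality of $W$ and $D$.

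First, Theorem~\ref{thm:mine0} gives that any family of global minimizers $\{m_\theta\}_\theta$ of $\{\EA(\theta)\}_\theta$ satisfies
\[
\int_\Omega \lvert \nabla m_\theta \rvert^2 \, dx \;=\; \EA(\theta) \;=\; 4\pi \sin^2\theta + \so(\sin^2\theta) \quad \text{as } \theta\tod 0.
\]
Consequently, one may fix $\theta_0 \in (0,\tfrac{\pi}{2})$ small enough that $\EA(\theta) < 2\pi$ for every $\theta \in (0,\theta_0]$ (possible since the leading-order term $4\pi \sin^2\theta$ is strictly less than $2\pi$ whenever $\sin\theta < \tfrac{1}{\sqrt{2}}$). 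Plugging this into \eqref{eq:WDEineq} yields
\[
\lvert W(m_\theta)\rvert \leq \tfrac{1}{2\pi}\, \EA(\theta) < 1, \qquad \lvert D(m_\theta)\rvert \leq \tfrac{1}{4\pi}\, \EA(\theta) < \tfrac{1}{2},
\]
and since $W(m_\theta),D(m_\theta) \in \Z$, this forces $W(m_\theta) = D(m_\theta) = 0$.

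The only point deserving care is the integrality of $W(m_\theta)$ and $D(m_\theta)$ themselves: the characterizations \eqref{def:W} and \eqref{def:D} are phrased for sufficiently regular $m$, while a priori a minimizer of $\EA(\theta)$ lies only in $\dot{H}^1(\Omega,\Sp^2)$. This is not a real obstacle. On the one hand, the strong $\dot{H}^1$-proximity of $m_\theta$ to the smooth profile produced in Theorem~\ref{thm:mine1}, together with standard elliptic regularity for the (constrained) Euler-Lagrange system, guarantees smoothness of $m_\theta$ for small $\theta$. On the other hand, $W$ and $D$ are $\Z$-valued and continuous on the relevant $\dot{H}^1$-class of $\Sp^2$-valued maps via the integral representations
\[
W(m) = \tfrac{1}{\pi}\int_\Omega J\!\left(\vect{m_1\\m_2}\right) dx, \qquad D(m) = \tfrac{1}{2\pi}\int_\Omega \omega(m)\, dx,
\]
to which \eqref{eq:WDEineq} applies directly through Young's inequality on the pointwise integrands (cf.\ \cite{brezisnirenberg95}). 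Either way, the energy bound above yields $W(m_\theta)=D(m_\theta)=0$.
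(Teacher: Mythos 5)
Your proof is correct and follows essentially the same route as the paper: use the leading-order energy expansion to force $\EA(\theta)<2\pi$ for small $\theta$, then combine the inequalities \eqref{eq:WDEineq} with the integrality of $W$ and $D$. The extra remark on regularity/integrality of $W,D$ for $\dot H^1$ maps is sensible and a useful addition, but not something the paper addresses differently.
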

\begin{proof}
Using \eqref{eq:expansionC}, choose $0<\theta\ll 1$ small enough so that $0\leq \EA(\theta) < 2\pi$. Then, the proposition directly follows from the fact that $W$ and $D$ are integers and the classical inequalities \eqref{eq:WDEineq} hold.
\end{proof}

\subsection{The origin of $\EA(\theta)$}\label{sec:origineasym}
The minimization problem \eqref{eq:defsffree} appears naturally in the asymptotic analysis of the micromagnetic energy in the limit $\eta \tod 0$, with a fixed parameter $\lambda >0$:\footnote{Recall the physical interpretation of the parameters: We have $\eta:=Q\frac{t^2}{d^2}$ and $\lambda:=\frac{t^2}{d^2} \ln^{-1} \frac{1}{\eta}$ where $Q$ is the quality factor (of the crystalline anisotropy), $d$ is the exchange length and $2t$ is the thickness of the ferromagnetic film. The regime ($\eta\ll 1$ and $\lambda\sim 1$) corresponds to ($Q\ll 1$ and $\ln \frac{1}{Q}\sim (\frac{t}{d})^2$), where the cross-over from symmetric to asymmetric walls is known to occur. In particular, $\lambda$ can be interpreted as measure of the film thickness, relative to the critical film thickness of the cross-over. The value $\cos\alpha$ can be seen as strength of the reduced external magnetic field. We refer to \cite{dioreducedmodel13,ottocrossover02} for further details.}
\begin{align*}%\label{model2}
E_\eta(m)= \int_{\Omega} \lvert\nabla m\rvert^2 dx + \lambda \ln \tfrac{1}{\eta} \int_{\R^2} \lvert h\rvert^2 dx + \eta \int_{\Omega} (m_1 - \cos \alpha )^2 + m^2_3\, dx
\end{align*}
Here $m \colon \Omega \to \Sp^2$ describes a general, i.e. not stray-field free, transition layer that connects the two directions $m(\pm\infty,\cdot)\stackrel{\eqref{convent}}{=}m_\alpha^\pm$ for a fixed angle $\alpha\in[0,\tfrac{\pi}{2}]$ (cf. \eqref{eq:bcinfty}). The stray field $h=-\nabla u\colon\R^2\to \R^2$ is generated in the $x_1x_3$-plane due to non-vanishing ``magnetic charges'' $\nabla \cdot (m' \mathbf{1}_\Omega)$ via the static Maxwell equation:
\begin{align*}
\nabla\cdot (h+m'\mathbf{1}_{\Omega})=0 \quad \text{in } \mathcal{D}'(\R^2).
\end{align*}
In \cite{dioreducedmodel13}, rigorous asymptotic analysis, based on the $\Gamma$-convergence method, was used to derive a reduced model for the minimal energy of such transition layers:
\begin{align}\label{eq:mineeta}
\min_{m \in X^\alpha} E_\eta(m) \stackrel{\eta\tod 0}{\longrightarrow} \min_{\theta\in[0,\frac{\pi}{2}]}\Bigl(\EA(\theta) + \lambda \ES(\alpha-\theta)\Bigr), \quad \text{given any }\alpha\in[0,\tfrac{\pi}{2}],
\end{align}
for $\ES(\alpha-\theta)= 2\pi (\cos\theta-\cos\alpha)^2$. This reduced model confirms and renders more precisely a statement on extended tails of asymmetric N\'eel walls that is found in the physics literature \cite[Page 250]{hubertschaefer98}:
\begin{quote}
  ``The magnetization of an asymmetric N\'eel wall points in the same direction at both surfaces, which is [\ldots] favourable for an applied field along this direction. This property is also the reason why the wall can gain some energy by splitting off an extended tail, reducing the core energy in the field generated by the tail. [\ldots] The tail part of the wall profile increases in relative importance with an applied field, so that less of the vortex structure becomes visible with decreasing wall angle. At a critical value of the applied field the asymmetric disappears in favour of a symmetric N\'eel wall structure.''
\end{quote}
The symmetric N\'eel wall has been studied extensively by many authors (e.g. in \cite{melcher03,melcher04,dko06,ignatotto08,ignat09,dkmorepulsive03}). It occurs in very thin films (cf. \cite{ottocrossover02} for a precise regime), where the exchange energy suppresses variation along the thickness direction of the sample. Moreover, the stray-field energy suppresses an out-of-plane component of the magnetization on the sample surface, hence in the whole sample. More precisely, to leading order in $t/d$, the symmetric N\'eel wall is a smooth, one-dimensional transition layer with values in $\Sp^1$ that connects the boundary values $m_\alpha^\pm$ and minimizes $E_\eta$.
To this end, it exhibits two internal length scales: It combines a symmetric core of width $w_\text{core}\sim \lambda^{-1} \ln^{-1}\frac{1}{\eta}$ with two logarithmically decaying tails $w_\text{core}\lesssim \lvert x_1 \rvert \lesssim w_\text{tails}\sim \lambda\ln\frac{1}{\eta}/\eta$. The symmetric N\'eel wall is invariant with respect to all the symmetries of the variational problem (besides translation invariance). Its specific energy is to leading order given by the energy of the stray field generated in the tails. It depends quartically on the wall angle $\alpha\ll 1$.

The first part of the above quotation suggests that asymmetric domain walls can replace the symmetric wall core of the symmetric N\'eel wall, in an optimal way. The result \eqref{eq:mineeta} confirms this on the level of the energy: Indeed, asymptotically, $\ES(\alpha-\theta)$ is the micromagnetic energy of logarithmic wall tails that connect the boundary values $m_\theta^\pm$ of an asymmetric wall core, i.e. a minimizer of $\EA(\theta)$, with the global boundary conditions $m_\alpha^\pm$. Thus, \eqref{eq:mineeta} states that, asymptotically, $\min E_\eta$ splits into contributions from an asymmetric core and symmetric tails in an optimal way. The core wall angle $\theta$ serves as an indicator for the actual wall type (asymmetric N\'eel/Bloch for $\theta>0$ or symmetric N\'eel for $\theta=0$) and the relative amount of rotation in the wall core.

\subsection{Bifurcation from symmetric to asymmetric N\'eel walls}\label{sec:crossover}
We now address the second part of the quotation from \cite{hubertschaefer98} in the previous section, i.e. the core size $\theta$ as a function of the global wall angle $\alpha$ and the relative film thickness $\lambda$: Consider the reduced energy
\begin{align*}
[0,\tfrac{\pi}{2}] \ni \theta \mapsto \EA(\theta) + \lambda \ES(\alpha-\theta), \quad \text{for }\alpha\in[0,\tfrac{\pi}{2}]\text{ fixed}.
\end{align*}
Assuming smoothness of $\EA$ in $\theta$ and using \eqref{eq:expansionC}, critical points $\theta$ of the above reduced energy solve the equation
\begin{align*}
0 = C_0 \theta + 2C_1 \theta^3 - 2\pi \, \lambda (\cos\theta-\cos\alpha)\sin\theta + \so(\theta^3),
\end{align*}
which always has the trivial solution $\theta=0$. For $\lambda>2$ and $\cos\alpha<1-\frac{C_0}{2\pi\lambda}=1-\tfrac{2}{\lambda}$, expanding $\sin\theta$ and $\cos\theta$ up to order $\theta^4$, we obtain another branch of positive solutions\footnote{Note that at this point we need the expansion of $\EA$ up to order $\theta^4$.}
\begin{align*}
\theta \approx \sqrt{\tfrac{2\pi\lambda(1-\cos\alpha) - C_0}{2C_1 + \pi\lambda(1+\frac{1-\cos\alpha}{3})}} = \sqrt{\tfrac{\lambda(1-\cos\alpha) - 2}{\frac{304}{105} + \lambda \frac{4-\cos\alpha}{6}}} \quad \text{as }\alpha \tod \alpha^\text{crit}:=\arccos(1-\tfrac{2}{\lambda}).
\end{align*}
Evaluating the second derivative of the reduced model at these critical points, we see that for $\alpha>\alpha^\text{crit}$ (provided $\lambda>2$) the trivial zero $\theta=0$ becomes unstable, while the non-trivial branch -- corresponding to an asymmetric N\'eel wall for $0<\theta \ll 1$ -- is a minimizer.
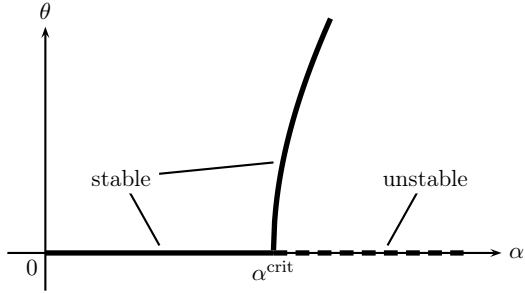
\begin{figure}
\centering
\begin{pspicture}(-1,-0.5)(7,3.5)
\psline{->}(-0.5,0)(6,0)
\psline{->}(0,-0.5)(0,3)
\psline[linewidth=2pt](0,0)(3,0)
\psline[linewidth=2pt,linestyle=dashed](3,0)(5.5,0)
\parametricplot[plotstyle=line,linewidth=2pt,plotpoints=51]{0.3}{0.375}{t 10 mul 2 3.1415 mul 44.78 mul 1 t 3.1415 2 mul div 360 mul cos neg add mul 4 3.1415 mul neg add 6 3.1415 mul 3.1415 44.78 mul 4 t 3.1415 2 mul div 360 mul cos neg add 3 div mul add div sqrt 15 mul}
\psline(1,1)(1.5,0.1)
\psline(1,1)(3,1.2)
\rput*(1,1){\psscalebox{0.8}{stable}}
\psline(5,1)(4.5,0.1)
\rput*(5,1){\psscalebox{0.8}{unstable}}
\rput[b](0,3.1){\psscalebox{0.8}{$\theta$}}
\rput[l](6.1,0){\psscalebox{0.8}{$\alpha$}}
\rput[t](3,-0.1){\psscalebox{0.8}{$\alpha^\text{crit}$}}
\rput[tr](-0.1,-0.1){\psscalebox{0.8}{$0$}}
\end{pspicture}
\caption{Bifurcation diagram for the angle $\theta$ of the asymmetric core part in the case $\lambda>2$, depending on the global wall angle $\alpha$.}
\label{bifurc}
\end{figure}

In other words: In sufficiently thin ferromagnetic films, corresponding to $0<\lambda<2$ small, only the symmetric N\'eel wall is the stable minimizer of the reduced model. In thicker films, on the other hand, i.e. for $\lambda>2$, there exists a critical wall angle $\alpha^\text{crit}$ at which the symmetric N\'eel wall, which is a minimizer up to that point, becomes unstable with respect to perturbations that nucleate an asymmetric core. With decreasing field, i.e. increasing angle~$\alpha$, the core wall angle grows.

Hence, a supercritical bifurcation as in Figure~\ref{bifurc} is at the origin of the cross-over from symmetric to asymmetric N\'eel wall (with extended tails).

\subsection{Discussion on methods}

\subsubsection*{Asymptotic expansions by $\Gamma$-convergence}

Our proof of Theorems~\ref{thm:mine0}~and~\ref{thm:mine1} is based on the method of $\Gamma$-convergence, at the level of minimizers of $\EA(\theta)$. Proposition~\ref{prop:upperbd} gives an asymptotically optimal upper bound for the energy $\EA(\theta)$, while in Proposition~\ref{prop:compactnessasym} below we prove a compactness result for minimizers of $\EA(\theta)$ and an asymptotically sharp lower bound for their energy. In combination, these propositions immediately yield Theorems~\ref{thm:mine0}~and~\ref{thm:mine1} (see also Corollary~\ref{cor:compactnessasym} below).

This approach fits into the framework of asymptotic expansions by $\Gamma$-convergence (or the process of $\Gamma$-development), see e.g. \cite{AnzBal,BraTru}. In fact, for small $\theta$, \eqref{eq:defsffree} appears to be a scalar variational problem. To see this, we reformulate \eqref{eq:defsffree} in the following way: Let
\[ F_\theta(m) := \sin^{-2}\theta \int_\Omega \lvert \nabla m \rvert^2 dx \quad \text{for }m\in X_0\cap X^\theta. \]
We expect that -- at least in the regime $F_\theta(m) \leq C <\infty$ of small energy, which is the relevant one for the study of small-angle asymmetric N\'eel walls -- we can reconstruct the first and third component $m'$ of any such $m\in X_0\cap X^\theta$ from its second component $m_2$ by solving the Eikonal equation (cf. Section~\ref{sec:upper}):
\begin{align}\label{eq:eikasympt}
\lvert \nabla \psi \rvert^2 = 1-m_2^2 \;\; \text{in }\Omega, \quad \psi=0 \;\; \text{for }x_3=-1, \quad \psi=-2\cos\theta \;\; \text{for }x_3=1
\end{align}
for $\psi\in \dot{H}^2(\Omega)$ and setting $m':=\nabla^\perp \psi$. Hence, we may formally rewrite $F_\theta$ as functional on the second component $m_2$ alone, while the other components are slaved to $m_2$ via \eqref{eq:eikasympt}:
\[  F_\theta(m_2):= F_\theta\bigl(m_1,m_2,m_3\bigr), \quad \text{for }m':=\nabla^\perp \psi. \]
In order to define all functionals $F_\theta(m_2)$ on the same space, we introduce $f:=m_2/\sin\theta$, so that we may consider $F_\theta$ as function $F_\theta\colon \dot{H}^1(\Omega,\R) \to [0,\infty]$, given by
\begin{align*}
F_\theta(f) := 
\begin{cases}
\sin^{-2}\theta\int_\Omega\lvert \nabla m \rvert^2 dx, & \parbox[t]{0.5\textwidth}{if $f$ is such that $f(\pm\infty,\cdot)=\pm 1$ and the\\function $m_2:=\sin\theta \, f$ admits a unique\\extension $m'$ so that $m\in X_0\cap X^\theta$,\\[-2ex]}\\
+\infty, &\text{otherwise.}
\end{cases}
\end{align*}
Then, the leading-order $\Gamma$-limit $F^{(0)} \colon \dot{H}^1(\Omega,\R)\to[0,\infty]$ of $F_\theta$ is expected to be given by
\[ F^{(0)}(f) :=
\begin{cases}
\int_\Omega \lvert \nabla f \rvert^2 dx, &\text{if }f\in X^*,\\
+\infty, &\text{otherwise}.
\end{cases} \]
By Proposition~\ref{prop:minofe0} below, the energy $F^{(0)}$ admits only two minimizers $m_2^*$ of energy $E_0=4\pi$, which are related by the symmetry $x \leadsto -x$, $m_2\leadsto -m_2$. Since the whole energy $F_\theta$ is invariant under this symmetry, we cannot expect that any higher-order expansion will select one of the two minimizers, i.e. the first order has already locked the minimizer(s).

In line with the general results on $\Gamma$-expansions (see \cite[Eq.~(0.5)]{AnzBal} and \cite[Remark~1.8]{BraTru}), the next coefficient in the expansion of $\min F_\theta$ is just given in terms of a function $F^{(1)} \colon \dot{H}^1(\Omega,\R)\to[0,\infty]$
\[ F^{(1)}(f) :=
\begin{cases}
E_1, &\text{if }f=m_2^* \text{ for a $\sigma \in\{\pm 1\}$},\\
+\infty, &\text{otherwise},
\end{cases} \]
that attains a finite value only on the set of minimizers $\{m_2^* \,\vert\, \sigma=\pm 1 \}$ of the leading-order limit.
The coefficient $E_1=F^{(1)}(m_2^*)$ has been defined in \eqref{eq:defe1}.

Viewed as a formal $\Gamma$-expansion, we may write
\[ F_\theta \stackrel{\Gamma}{=} F^{(0)} + \sin^2\theta \, F^{(1)} + \so(\theta^2) \quad \text{weakly in } \dot{H}^1 \text{ as }\theta\tod 0. \]
In fact, at the first order, we conjecture that $F_\theta$ $\Gamma$-converges to $F^{(0)}$ in the weak $\dot{H}^1$-topology as $\theta \downarrow 0$. Note that Lemma~\ref{lem:compleadingm1} proves compactness and the liminf inequality of the $\Gamma$-convergence program. A variant of the construction argument in Proposition~\ref{prop:upperbd} might also carry over to general limit configurations $f\in X^*$, so that the limsup inequality should hold true.

\subsubsection*{Implicit function theorem}
One expects that the expansion \eqref{eq:expansioneasym} of $\EA(\theta)$ and its minimizers $m_\theta$ can also be derived by more standard methods such as an implicit function theorem, applied to the Euler-Lagrange equation $G_\theta(m_\theta,\lambda_\theta,p_\theta)=0$ of $\EA(\theta)$, which is formally given by (for simplicity we use the notation $\nabla p = (\partial_1p,0,\partial_3p)$)
\begin{align*}
G_\theta\bigl(m,\lambda,p\bigr) := 
\left(
\begin{gathered}
-\Delta m + \nabla p - \lambda m\\
\nabla \cdot m'\\
(\partial_3  m_{1}, \partial_3  m_{2}, m_{3})\big\vert_{\partial\Omega}\\
\tfrac{1}{2}(\lvert m \rvert^2 - 1)\\
m(\pm\infty,\cdot) - m^\pm_\theta
\end{gathered}
\right).
\end{align*}
The function $\lambda \colon \Omega\to\R$ is the Lagrange multiplier that corresponds to the unit-length constraint $\lvert m \rvert^2 =1$ in the variational problem, while the function $p\colon\Omega\to\R$ is the Lagrange multiplier that corresponds to the constraint $\nabla\cdot m'=0$ on the divergence of $m'$, just as the pressure in Stokes' equations.

Denote by $m_\theta\in X_0$ a curve of minimizers of $\EA(\theta)$ that smoothly depends on the parameter $\theta$ in a neighborhood of $\theta=0$. For $\theta=0$, we have $m_0=(1,0,0)$, which solves $G_0(m_0,\lambda_0,p_0)=0$ for -- a priori -- an arbitrary choice of the multipliers $\lambda_0$ and $p_0$, provided $\partial_1 p_0 = \lambda_0$ and $\partial_3 p_0=0$. However, it turns out that one has $p_0=\pi \ell$ with $\ell$ given in \eqref{eq:defell} (in fact, this means $\lambda_0=\mu$ as given in \eqref{eq:defmu}).

In the spirit of the implicit function theorem method, one identifies the first-order correction $(\delta m,\delta \lambda, \delta p)$ to $(m_0,\lambda_0,p_0)$, i.e. $(m_\theta,\lambda_\theta,p_\theta)=(m_0,\lambda_0,p_0)+\theta (\delta m,\delta \lambda, \delta p)+\so(\theta)$, by solving the linear equation
\[ 0= \tfrac{d}{d\theta}\big\vert_{\theta=0} G_\theta(m_\theta,\lambda_\theta,p_\theta) = \partial_\theta\vert_{\theta=0} G_\theta (m_0,\lambda_0,p_0) + DG_0(m_0,\lambda_0,p_0) (\delta m,\delta \lambda,\delta p) \]
for $(\delta m,\delta \lambda, \delta p)$. The differential of $G_0$ is given by
\begin{align*}
DG_0(m_0,\lambda_0,p_0)(\delta m, \delta \lambda, \delta p) = \left(
\begin{gathered}
-\Delta \delta m + \nabla \delta p - \lambda_0 \, \delta m - m_0 \, \delta \lambda\\
\nabla \cdot \delta m'\\
(\partial_3  \delta m_{1}, \partial_3  \delta m_{2}, \delta m_{3})\big\vert_{\partial\Omega}\\
m_0\cdot \delta m\\
\delta m(\pm\infty,\cdot)
\end{gathered}
\right),
\end{align*}
and the only non-vanishing term in $\partial_\theta\vert_{\theta=0} G_\theta(m_0,\lambda_0,p_0)$ is $\partial_{\theta}\vert_{\theta=0} m^\pm_{2,\theta} = \pm 1$.

However, according to our results we can expect to have two branches of solutions, so that $DG_0(m_0,\lambda_0,p_0)$ has a non-trivial kernel, containing the linear space spanned by the difference of the functions $m_2^*$ for $\sigma=\pm 1$ (see \eqref{eq:defm2star}). But even after restricting to a suitable subspace on which this degeneracy is ruled out, there is another degeneracy in the Lagrange multipliers: $\delta p$ and $\delta \lambda$ can be taken arbitrarily as long as $\partial_1\delta p = \delta \lambda$ and $\partial_3 \delta p = 0$. It is possible to (formally) identify $\delta\lambda=0$ and $\delta p \equiv \text{const}$ by taking into account also higher-order terms in the expansion of $(m_\theta,\lambda_\theta,p_\theta)$ around $\theta=0$, but this involves similar technical problems as the $\Gamma$-convergence approach that we have pursued.

\section{Properties of the coefficients $E_0$ and $E_1$}\label{sec:propertiese0e1}
\subsection{Identification of minimizers of $E_0$}\label{sec:idmine0}
In this section, we show that a minimizer of $E_0$ exists and has the form \eqref{eq:defm2star}. The main idea is to write an admissible function $f\in \Xs$ in \eqref{eq:defe0} as cosine series in $x_3$-direction and find a lower bound independent of $f$ with help of the Modica-Mortola trick. The lower bound will be attained by the two configurations given in \eqref{eq:p3m2}. 

\medskip
\begin{prop}\label{prop:minofe0}
The variational problem \eqref{eq:defe0} has exactly two minimizers $m_2^*$ given by \eqref{eq:defm2star}. In particular, $E_0=4\pi$.
\end{prop}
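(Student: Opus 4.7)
The plan is to develop any admissible $f \in X^*$ into a cosine series in the $x_3$-direction, translate \eqref{eq:defe0} into an infinite-dimensional variational problem in the Fourier coefficients, and then obtain a lower bound via a spectral-gap estimate combined with a Modica--Mortola trick; tracking equality in every inequality will pin down the minimizers and yield the sharp constant $4\pi$.

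First, for a.e. $x_1\in\R$ I expand $f(x_1,\cdot)\in H^1(-1,1)$ in the Neumann basis $\phi_k(x_3):=\cos\bigl(\tfrac{k\pi}{2}(x_3+1)\bigr)$, $k\geq 0$, writing $f=\sum_{k\geq 0}a_k(x_1)\phi_k(x_3)$. Parseval with respect to the mean inner product $\dashint_{-1}^1\cdot\,dx_3$ converts the pointwise constraint $\overline{f^2}\equiv 1$ into
\[ a_0^2(x_1)+\tfrac12\sum_{k\geq 1}a_k^2(x_1)=1 \quad\text{for a.e.\ }x_1\in\R, \]
and since the $\phi_k$ are eigenfunctions of $-\partial_{33}$ with Neumann boundary conditions, the Dirichlet energy takes the form
\[ \int_\Omega|\nabla f|^2\,dx=\int_\R\Bigl[2(a_0')^2+\sum_{k\geq 1}\bigl((a_k')^2+\tfrac{k^2\pi^2}{4}a_k^2\bigr)\Bigr]dx_1. \]
The boundary conditions $f(\pm\infty,\cdot)=\pm 1$ translate into $a_0(\pm\infty)=\pm 1$, $a_k(\pm\infty)=0$ for $k\geq 1$, and $\bar f(0)=0$ reads $a_0(0)=0$.

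Next, I reduce to a scalar problem in $a_0$ alone. The spectral gap $k^2\geq 1$ for $k\geq 1$ yields $\sum_{k\geq 1}\tfrac{k^2\pi^2}{4}a_k^2\geq\tfrac{\pi^2}{2}(1-a_0^2)$. Differentiating the constraint gives $\sum_{k\geq 1}a_k a_k'=-2a_0a_0'$, so Cauchy--Schwarz with $S:=\sum_{k\geq 1}a_k^2=2(1-a_0^2)$ yields $\sum_{k\geq 1}(a_k')^2\geq\tfrac{(S'/2)^2}{S}=\tfrac{2a_0^2(a_0')^2}{1-a_0^2}$. Combining and simplifying,
\[ \int_\Omega|\nabla f|^2\,dx\geq\int_\R\Bigl[\tfrac{2(a_0')^2}{1-a_0^2}+\tfrac{\pi^2}{2}(1-a_0^2)\Bigr]dx_1\geq 2\pi\int_\R|a_0'|\,dx_1\geq 4\pi, \]
using AM--GM for the middle inequality and $a_0(\pm\infty)=\pm 1$ for the last.

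Finally, for the equality case: sharpness in the spectral-gap step forces $a_k\equiv 0$ for all $k\geq 2$, after which Cauchy--Schwarz is automatically an identity. Sharpness in AM--GM gives the ODE $a_0'=\tfrac{\pi}{2}(1-a_0^2)$, which together with $a_0(0)=0$ and $a_0(\pm\infty)=\pm 1$ forces $a_0=\ell(x_1)$. The constraint then pins down $a_1=\sigma\sqrt{2(1-\ell^2)}$ with $\sigma\in\{\pm 1\}$, and since $\phi_1(x_3)=-\sin(\tfrac{\pi}{2}x_3)$, absorbing the sign into $\sigma$ recovers exactly \eqref{eq:defm2star}; a direct calculation verifies that both candidates lie in $X^*$ and attain the bound $4\pi$. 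The one subtle point is ensuring the Cauchy--Schwarz and spectral-gap estimates are simultaneously sharp on the same configuration, which works out precisely because both steps demand that all mass concentrate in the first nontrivial mode $a_1$.
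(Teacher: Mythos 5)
Your proof is correct and follows essentially the same route as the paper's: Fourier cosine decomposition in $x_3$, the spectral gap $k^2\geq 1$ on the higher modes, a Cauchy--Schwarz reduction to the zeroth coefficient (the paper phrases this via $g=(\sum_{n\geq 1}a_n^2)^{1/2}\in H^1(\R)$, you via the constraint derivative, but these are the same estimate), and the Modica--Mortola/AM--GM step yielding $2\pi\int|a_0'|\,dx_1\geq 4\pi$, with the equality analysis pinning down $a_0=\tanh(\tfrac{\pi}{2}x_1)$, $a_k\equiv 0$ for $k\geq 2$, and $a_1=\pm\sqrt{2(1-\ell^2)}$.
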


\begin{proof}[Proof of Proposition \ref{prop:minofe0}:] We split the proof into several steps:

\textbf{Step 1:} \textit{Fourier cosine representation of \eqref{eq:defe0}.}

Let $f \in \Xs$, i.e. be admissible in \eqref{eq:defe0}. Since $f\in \dot{H}^1(\Omega)$ and $f(\pm \infty, \cdot) = \pm 1$, there exist Fourier coefficients
$a_0\in \dot{H}^1(\R)$ and $a_n \in H^1(\R)$ for every $n \geq 1$ so that $f$ is represented as cosine series (in $x_3$-direction):
\begin{align*}
  f(x_1,x_3) - \tfrac{a_0(x_1)}{\sqrt{2}}= \sum_{n\geq 1} a_n(x_1) \cos\bigl(\tfrac{\pi}{2} \, n \, (x_3+1)\bigr) \quad \textrm{in }\, H^1(\Omega).
\end{align*}
Therefore, one computes:
\begin{align}
  \int_{\Omega} \bigl\lvert \nabla f \bigr\rvert^2 dx &= \int_{\R} \Bigl( \bigl\lvert \partial_1  a_0 \bigr\rvert^2 + \sum_{n \geq 1} \bigl\lvert \partial_1  a_n \bigr\rvert^2  + \sum_{n \geq 1} (\tfrac{\pi}{2} n)^2 \lvert a_n \rvert^2 \Bigr) \, dx_1,\label{eq:direncosine}
 \end{align}
and 
\begin{align}
\label{equ1}
\int_{\Omega_\pm} |f(x_1, x_3)\mp 1|^2\, dx=\int_{\R_\pm} |a_0\mp \sqrt{2}|^2\, dx_1+ \sum_{n\geq 1}\int_{\R_\pm} |a_n|^2\, dx_1.
\end{align}
Hence, $a_0 \mp \sqrt{2} \in H^1(\R_\pm)$, $a_0$, $a_n$ are continuous in $\R$, and we have $\lim_{x_1 \to \pm \infty}a_0(x_1)=\pm \sqrt{2}$ and $\lim_{|x_1| \to \infty} a_n(x_1) = 0$ for every $n\geq 1$.
Finally, one may write the constraint $\dashint_{-1}^1 f^2 dx_3 = 1$ in terms of the Fourier coefficients as:
\begin{align}
 1 = \dashint_{-1}^1 \! f^2(x_1, x_3) \, dx_3 = \tfrac{1}{2} \bigl( \lvert a_0(x_1) \rvert^2 + \sum_{n \geq 1} \lvert a_n(x_1) \rvert^2 \bigr) \quad \forall x_1 \in \R,\label{eq:avconstrcosine}
\end{align}
while $\bar{f}(0)=0$ is equivalent to $a_0(0)=0$.

\textbf{Step 2:} \textit{Lower bound: $E_0\geq4\pi$.}

Indeed, with the notation introduced at Step 1 for an arbitrary $f\in \Xs$, we define $g\colon\R\to\R_+$ by
\begin{align}
g(x_1) := \biggl(\sum_{n \geq 1} \lvert a_n(x_1) \rvert^2\biggr)^{\frac{1}{2}} \stackrel{\eqref{eq:avconstrcosine}}{=} \biggl(2 - \lvert a_0(x_1) \rvert^2\biggr)^{\frac{1}{2}} \quad \forall x_1\in \R.\label{eq:defg}
\end{align}
By \eqref{eq:direncosine} and \eqref{equ1}, we have that $g\in H^1(\R)$ since the Cauchy-Schwarz inequality yields:
\begin{align}
\lvert \partial_1  g \rvert^2 = \tfrac{(\sum_{n\geq 1} a_n \partial_1  a_n)^2}{\sum_{n\geq 1} \lvert a_n \rvert^2} \leq \sum_{n\geq 1} \lvert \partial_1  a_n \rvert^2
\quad \textrm{in }\, L^1(\R).\label{eq:partialg}
\end{align}
Let us introduce the continuous profile $$\ell=\tfrac{a_0}{\sqrt{2}}\in \dot{H}^1(\R).$$ Note that $\lim_{x_1 \to \pm \infty}\ell(x_1)=\pm 1$. By \eqref{eq:direncosine}, \eqref{eq:defg} and \eqref{eq:partialg}, we have the lower bound:
\begin{gather}\label{eq:mmargument}
\begin{aligned}
 %\MoveEqLeft
\int_{\Omega} \lvert \nabla f \rvert^2 dx & \geq \int_{\R} \Bigl( \lvert \partial_1  a_0 \rvert^2 + \lvert \partial_1  g \rvert^2 + (\tfrac{\pi}{2})^2 g^2 \Bigr) \, dx_1\\
  &= 2 \int_{\R} \Bigl( \bigl\lvert \partial_1  \ell \bigr\rvert^2 + \bigl\lvert \partial_1  \sqrt{1 - \ell^2} \bigr\rvert^2 + (\tfrac{\pi}{2})^2 \bigl(1-\ell^2\bigr) \Bigr) \, dx_1\\
  &= 2 \int_{\R} \Bigl( \tfrac{\lvert \partial_1  \ell \rvert^2}{1-\ell^2} + (\tfrac{\pi}{2})^2 \bigl(1-\ell^2\bigr) \Bigr) \, dx_1\\
 &\geq 2 \pi \int_{\R} \bigl\lvert \partial_1  \ell \bigr\rvert \, dx_1 \geq 2 \pi \int_{\R} \partial_1  \ell \, dx_1 = 4 \pi,
\end{aligned}
\end{gather}
where we applied Young's inequality to obtain the first term in the last line.

\textbf{Step 3:} \textit{The configurations $m^*_2$ in \eqref{eq:defm2star} are the only minimizers of $E_0$.}

First of all, a direct computation shows that $m^*_2$ in \eqref{eq:defe0} belongs to $\Xs$ with $\int_{\Omega} \lvert \nabla m^*_2 \rvert^2 dx=4\pi$. In order to prove that these two configurations are the unique minimizers, let $f \colon \Omega \to \R$ be an arbitrary minimizer of $E_0$, i.e. let $f\in \Xs$ satisfy $\int_\Omega \lvert \nabla f \rvert^2 dx = 4\pi$. By \eqref{eq:mmargument}, this implies that $\partial_1 \ell \geq 0$ and
\begin{align}
  \partial_1 \ell &= \tfrac{\pi}{2} (1-\ell^2) \quad \textrm{in }\, \R,\label{eq:odeell}\\
  g^2 &= \sum_{n \geq 1} n^2 \lvert a_n \rvert^2 \quad \textrm{in }\, \R.\label{eq:highercoeffs}
\end{align}
Comparing \eqref{eq:highercoeffs} to \eqref{eq:defg}, one deduces that the Fourier modes $a_n$ with $n\geq 2$ vanish, and the unique solution of \eqref{eq:odeell} with $\ell(0) = 
a_0(0) = 0$ is given by $\ell$ as it has been defined in \eqref{eq:defell}, so that $$a_0(x_1)=\sqrt{2}\tanh(\tfrac{\pi}{2}x_1).$$
Therefore, \eqref{eq:defg} turns into the relation
$a_1^2(x_1) = 2\bigl(1-\tanh^2(\tfrac{\pi}{2}x_1)\bigr)>0$ for every $x_1\in \R$ and the continuity of $a_1$ implies the existence of $\sigma \in \{\pm 1\}$ such that
$$a_1(x_1)=\sigma\sqrt{2} \sqrt{1-\tanh^2(\tfrac{\pi}{2}x_1)} \quad \textrm{in }\, \R.$$
Hence, using $\cos(\tfrac{\pi}{2}(x_3+1)) = -\sin(\tfrac{\pi}{2} x_3)$, we conclude that $f$ has the form given in \eqref{eq:defm2star}. %\qed
\end{proof}

\subsection{Derivation of the Euler-Lagrange equation of $E_0$}\label{sec:ele0}
Let $m_2^*\in \Xs$ be one of the minimizers of $E_0$.
We aim to prove that $m_2^*$ satisfies the equation
\begin{align}\label{eq:ele0}
\int_\Omega \nabla m^*_2 \cdot \nabla f \, dx = \int_\Omega m^*_2  \, f \, \mu(x_1) dx \quad \forall f \in \dot{H}^1(\Omega), \, \bar{f}(0)=0,
\end{align}
for the Lagrange multiplier $\mu$ as defined in \eqref{eq:defmu}.

Let $f \in \dot{H}^1(\Omega)$ be such that $\bar{f}(0)=0$ and $f=0$ if $\lvert x_1 \rvert\gg 1$. We define a variation $\gamma_t \in \Xs$ of $m_2^*$ for $\lvert t \rvert \ll 1$ as
\begin{align}\label{eq:gamma}
\gamma_t=\tfrac{m^*_2+t f}{\sqrt{\dashint_{-1}^1 \! (m^*_2+t f)^2 dx_3}} \quad \text{in }\Omega.
\end{align}
Since $\gamma_0 = m_2^*\in \Xs$ minimizes $E_0$, we deduce that
\begin{align*}
\tfrac{d}{dt}\big\vert_{t=0} \int_\Omega \lvert \nabla \gamma_t \rvert^2 dx = 0.
\end{align*}
Using the fact that 
\begin{align*}
0=\tfrac{d}{dx_1} \dashint_{-1}^1 (m^*_2)^2(\cdot, x_3)\,  dx_3 =2 \dashint_{-1}^1 m^*_2(\cdot, x_3) \tfrac{d}{dx_1}m_2^*(\cdot, x_3)\,  dx_3,
\end{align*}
a short computation yields that $m_2^*$ satisfies the Euler-Lagrange equation \eqref{eq:ele0}, provided the test functions have compact support.

In order to pass from compactly supported functions to arbitrary $f\in\dot{H}^1(\Omega)$ with $\bar{f}(0)=0$, we consider $f_n := \chi_n f$ for $\chi_n(x_1)=\chi(\tfrac{x_1}{n})$, where $\chi \colon \R \to [0,1]$ is a smooth, compactly supported cutoff function with $\chi\equiv 1$ on $[-1,1]$. Then $\nabla f_n \to \nabla f$ in $L^2(\Omega)$, hence, by Hardy's inequality \eqref{eq:hardy}, also $\mu^{\frac{1}{2}}(x_1) f_n\to \mu^{\frac{1}{2}}(x_1) f$ in $L^2(\Omega)$. Therefore, passing to the limit $n\tou \infty$ in \eqref{eq:ele0} for $f=f_n$, one deduces that indeed \eqref{eq:ele0} holds for every $f\in \dot{H}^1(\Omega)$ with $\bar{f}(0)=0$.

\subsection{The Hessian of $E_0$ and its spectral gap}\label{sec:hesse0}
As above, let $m_2^*\in \Xs$ be one of the minimizers of $E_0$ and $f \in \dot{H}^1(\Omega)$ such that $\bar{f}(0)=0$ and $f=0$ if $\lvert x_1 \rvert\gg 1$. Again, we consider the variation $\gamma_t$ of $m_2^*$ defined in \eqref{eq:gamma}, under the additional assumption that the function $f$ be tangential to the constraint $\dashint_{-1}^1 (m_2^*)^2 \, dx_3 = 1$. To this end, we require $f$ to satisfy $\int_{-1}^1 m_2^* \, f \, dx_3 = 0$.

Since $\gamma_0=m_2^*\in \Xs$ minimizes $E_0$, one has that 
\begin{align*}
%\label{secon}
2B(f,f):=\tfrac{d^2}{dt^2}\big\vert_{t=0} \int_{\Omega} \lvert \nabla \gamma_t \rvert^2 dx \geq 0.
\end{align*}
Using \eqref{eq:ele0} one can explicitly compute the bilinear form
\begin{align}
\label{eq:hesse0}
B(f,\tilde f) = \int_\Omega \nabla f \cdot  \nabla \tilde f  - \mu(x_1) \, f \tilde f\, dx.
\end{align}
Hence, the same density argument as in the previous section shows that we trivially have
\begin{align*}
B(f,f)\geq 0\text{ for every $f\in \dot{H}^1(\Omega)$ with $\bar{f}(0)=0$ and $\int_{-1}^1 m_2^* \, f \, dx_3 = 0$}.
\end{align*}

However, the Hessian in fact has a spectral gap as we prove below.

\medskip
\begin{prop}
\label{prop:spectralgap}
Let $m_2^*$ be given by \eqref{eq:defm2star} for some $\sigma\in \{\pm 1\}$. For all $f \in \dot{H}^1(\Omega)$ with
\begin{align}\label{eq:constraintsonf}
\bar{f}(0)=0 \quad\text{and}\quad \int_{-1}^1 m_2^* \, f \, dx_3 = 0,
\end{align}
we have
\begin{align}\label{eq:sghess}
B(f,f)=\int_\Omega \lvert \nabla f \rvert^2 - \mu(x_1) \, f^2 \, dx\geq \tfrac{1}{5} \int_\Omega \lvert \nabla f \rvert^2 + \mu\,f^2 \, dx,
\end{align}
where $\mu$ has been defined in \eqref{eq:defmu}. Moreover, $B(f,f)\geq \tfrac{\pi^2}{4} \lVert f \rVert^2_{L^2}$, where $\tfrac{\pi^2}{4}$ is sharp.
\end{prop}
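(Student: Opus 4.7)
The plan is to diagonalise $B$ via the cosine Fourier decomposition in $x_3$ that was used in the proof of Proposition~\ref{prop:minofe0}, and then to handle the one sector in which the two constraints couple through a one-dimensional spectral analysis of Pöschl--Teller type. Writing $f(x_1,x_3) = b_0(x_1)/\sqrt{2} + \sum_{n\geq 1}b_n(x_1)\cos\bigl(\tfrac{\pi}{2}n(x_3+1)\bigr)$, orthogonality turns $B(f,f)$ into the disjoint sum of 1-D quadratic forms $\int_\R [(b_n')^2 + (\tfrac{\pi^2n^2}{4}-\mu)b_n^2]\,dx_1$ (with the convention that the $n=0$ potential is $-\mu$). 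Since $m_2^*$ lies entirely in modes $0$ and $1$, the tangentiality condition $\int_{-1}^1 m_2^*\,f\,dx_3 \equiv 0$ couples only these two modes via $\ell\,b_0 = \sigma\sqrt{1-\ell^2}\,b_1$, while $\bar f(0) = 0$ becomes $b_0(0) = 0$. For $n\geq 2$ one has $\tfrac{\pi^2n^2}{4}-\mu\geq \tfrac{\pi^2}{2}$ since $\mu\leq\tfrac{\pi^2}{2}$, so each such mode already contributes at least $\tfrac{\pi^2}{4}\|b_n\|_{L^2(\R)}^2$ to $B(f,f)$.

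The coupled $(b_0,b_1)$-sector is parametrised by $(b_0,b_1) = c(x_1)\,(\sigma\sqrt{1-\ell^2},\,\ell)$ with $c\in H^1(\R)$ and $c(0)=0$. Exploiting the identities $\ell^2 + (1-\ell^2) = 1$ and $(\ell')^2/(1-\ell^2) = \tfrac{\mu}{2}$, a short computation collapses the quadratic form of this sector to
\[
\int_\R\Bigl[(c')^2 + \tfrac{\pi^2}{4}(2\ell^2-1)\,c^2\Bigr]\,dx_1,
\]
the Dirichlet form of the Schrödinger operator $H := -\partial_{x_1}^2 + \tfrac{\pi^2}{4}(2\ell^2-1)$. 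Under the rescaling $y = \tfrac{\pi}{2}x_1$, $H$ becomes $\tfrac{\pi^2}{4}(-\partial_y^2 + 1 - 2\operatorname{sech}^2 y)$, a shifted reflectionless Pöschl--Teller operator whose only $L^2$ bound state is the even function $\phi := \sqrt{1-\ell^2}$ at eigenvalue $0$, whose essential spectrum is $[\tfrac{\pi^2}{4},\infty)$, and for which the odd function $\ell$ plays the role of a bounded (non-$L^2$) threshold resonance. Hence $\sigma(H) = \{0\}\cup[\tfrac{\pi^2}{4},\infty)$.

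To exploit $c(0) = 0$, I split $c = c_e + c_o$ into even and odd parts in $x_1$. The odd part is automatically $L^2$-orthogonal to the even ground state $\phi$, so the variational characterisation of the second eigenvalue gives $\langle c_o, Hc_o\rangle\geq\tfrac{\pi^2}{4}\|c_o\|_{L^2}^2$. The even part satisfies $c_e(0) = 0$ and folds to the half-line operator $H^{\mathrm{hl}}$ on $[0,\infty)$ with Dirichlet condition at $0$; any bound state of $H^{\mathrm{hl}}$ below $\tfrac{\pi^2}{4}$ would extend by odd reflection to an $L^2$ odd eigenfunction of $H$ on $\R$, contradicting the uniqueness of $\phi$, so $\sigma(H^{\mathrm{hl}}) = [\tfrac{\pi^2}{4},\infty)$ and $\langle c_e, Hc_e\rangle\geq\tfrac{\pi^2}{4}\|c_e\|_{L^2}^2$. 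Combining all modes, together with $\|f\|_{L^2(\Omega)}^2 = \int c^2\,dx_1 + \sum_{n\geq 2}\int b_n^2\,dx_1$, yields the desired lower bound $B(f,f)\geq\tfrac{\pi^2}{4}\|f\|_{L^2(\Omega)}^2$. The first inequality~\eqref{eq:sghess} is then a corollary: from $\mu\leq\tfrac{\pi^2}{2}$ one gets $B(f,f)\geq\tfrac{\pi^2}{4}\|f\|_{L^2}^2\geq\tfrac{1}{2}\int\mu f^2\,dx$, and a direct rearrangement turns this into $B(f,f)\geq\tfrac{1}{5}\int(|\nabla f|^2 + \mu f^2)\,dx$.

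Sharpness of $\tfrac{\pi^2}{4}$ is witnessed by a sequence concentrated far from the origin, where the potential of $H$ is essentially at its threshold value $\tfrac{\pi^2}{4}$: one can take $c_R(x_1) = \sin\bigl(\tfrac{\pi}{R}(x_1-a_R)\bigr)\mathbf{1}_{[a_R, a_R+R]}(x_1)$ with $a_R, R\to\infty$, which produces admissible $f_R$ with $B(f_R,f_R)/\|f_R\|_{L^2}^2\to\tfrac{\pi^2}{4}$. The main technical obstacle is the spectral picture $\sigma(H) = \{0\}\cup[\tfrac{\pi^2}{4},\infty)$, i.e.\ the absence of hidden bound states in the gap $(0,\tfrac{\pi^2}{4})$; this follows transparently from the Pöschl--Teller classification, or alternatively from a Sturm-oscillation comparison using the two explicit solutions $\phi$ (positive, $L^2$, at energy $0$) and $\ell$ (one node, bounded, at the threshold $\tfrac{\pi^2}{4}$).
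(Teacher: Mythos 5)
Your proof is correct, and for the crucial one-dimensional estimate it takes a genuinely different route from the paper. Both approaches start identically: Fourier-cosine decomposition in $x_3$, the observation that the $n\ge 2$ modes each carry a margin $\ge \tfrac{\pi^2}{4}$ since $\mu\le\tfrac{\pi^2}{2}$, and the reduction of the coupled $n=0,1$ sector (via the tangency constraint) to a scalar $c\in H^1(\R)$ with $c(0)=0$ and quadratic form $\int (c')^2+\bigl(\tfrac{\pi^2}{4}-\mu\bigr)c^2\,dx_1$ (your potential $\tfrac{\pi^2}{4}(2\ell^2-1)$ is exactly $\tfrac{\pi^2}{4}-\mu$, and your $c$ is the paper's $g$ up to sign). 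Where you diverge is in proving $\int (c')^2 -\mu c^2\ge 0$ for $c(0)=0$: the paper does a bare-hands Hardy decomposition $c=\ell\cdot(c/\ell)$ plus one integration by parts and the ODE $\ell''=-\mu\ell$, which is elementary and self-contained; you instead recognize the Schr\"odinger operator as a shifted reflectionless P\"oschl--Teller operator, identify its spectrum $\{0\}\cup[\tfrac{\pi^2}{4},\infty)$ with even ground state $\phi=\sqrt{1-\ell^2}$, and split $c$ into even and odd parts, disposing of the odd part by orthogonality to $\phi$ and of the even part (vanishing at $0$) by the ``unfold to an odd function $\operatorname{sgn}(x_1)c_e(x_1)$'' trick. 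This is a valid and arguably more structural argument --- it explains \emph{why} the gap is $\tfrac{\pi^2}{4}$ (it is the bottom of the essential spectrum) and why $\ell$ is the extremal threshold mode, at the cost of importing the P\"oschl--Teller spectral picture (or a Sturm oscillation argument) as a black box; the paper's Hardy trick proves less but is fully elementary. Your sharpness sequence of bumps escaping to infinity is also different in flavor from the paper's truncation of $\ell$ but is equally valid, and the final rearrangement to \eqref{eq:sghess} from $B(f,f)\ge\tfrac{\pi^2}{4}\|f\|_{L^2}^2\ge\tfrac{1}{2}\int\mu f^2$ matches the paper's Step~3. One small point you skipped (and the paper states as its Step~1) is that the two constraints in \eqref{eq:constraintsonf} in fact force $f\in L^2(\Omega)$ despite $f$ a priori lying only in $\dot{H}^1$; you implicitly use this when you speak of $\|f\|_{L^2}$ and $c\in H^1(\R)$, so it is worth recording explicitly.
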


\begin{proof}
We divide the proof into three steps:

\textbf{Step 1:} \textit{Any $f\in\dot{H}^1(\Omega)$ with \eqref{eq:constraintsonf} satisfies $f\in L^2(\Omega)$.}

Indeed, by the second constraint in \eqref{eq:constraintsonf}, we may apply Poincar\'e's inequality \eqref{eq:poincm1} to $m_2^* f$. Hence,
\begin{align*}
\int_{\Omega} f^2\, dx&\leq 2\int_{\Omega} f^2(m_2^*)^2 + f^2 \underbrace{(1-\lvert m_2^* \rvert)^2}_{\leq C\mu(x_1)}\, dx \stackrel{\mathclap{\substack{\eqref{eq:constraintsonf}\\\eqref{eq:poincm1}}}}{\leq} C \int_{\Omega} \underbrace{\lvert \partial_3 (fm_2^*) \rvert^2}_{\mathclap{\leq C(\lvert \partial_3 f \rvert^2 + f^2 \mu)}} + f^2 \, \mu \, dx\\
&\leq C \int_{\Omega} \lvert \nabla f \rvert^2+ f^2 \, \mu \, dx \leq  C \int_{\Omega} \lvert \nabla f \rvert^2 \, dx,
\end{align*}
where we used that $\lvert  m_2^* \rvert\leq C$ and $\lvert \partial_3  m_2^*\rvert\leq C \sqrt{\mu(x_1)}$ in $\Omega$.

\textbf{Step 2:} \textit{For all $f\in H^1(\Omega)$ with \eqref{eq:constraintsonf}, we have
\begin{align*}
B(f,f) \geq \tfrac{\pi^2}{4} \int_\Omega f^2 \, dx.
\end{align*}
The constant $\tfrac{\pi^2}{4}$ is sharp.
}

As in the proof of Proposition~\ref{prop:minofe0}, we represent $f\in H^1(\Omega)$ as a Fourier cosine series: There exist coefficients $a_n \in H^1(\R)$, $n\geq 0$, such that
\begin{align*}
f(x_1,x_3) = \tfrac{a_0(x_1)}{\sqrt{2}} + \sum_{n\geq 1} a_n(x_1) \cos\bigl(\tfrac{\pi}{2} n (x_3+1)\bigr) \quad \text{in } H^1(\Omega).
\end{align*}
The constraints \eqref{eq:constraintsonf} turn into
\begin{align}\label{eq:constraintsfourier}
a_0(0) = 0,\qquad \tfrac{a_0}{\sqrt{1-\ell^2}} = \sigma \tfrac{a_1}{\ell} =: g \in H^1(\R).
\end{align}
Then, one computes
\begin{align*}
(\partial_1 a_0)^2 &= (1-\ell^2)  (\partial_1 g)^2 + \tfrac{\pi^2}{4} (1-\ell^2) \ell^2 \, g^2 - \tfrac{\pi}{2} (1-\ell^2) \ell \, \partial_1 g^2,\\
(\partial_1 a_1)^2 &=  \ell^2 (\partial_1 g)^2 + \tfrac{\pi^2}{4} (1-\ell^2)^2 \, g^2 + \tfrac{\pi}{2} (1-\ell^2) \ell \, \partial_1 g^2,\\
\end{align*}
as well as $g^2 = a_0^2 + a_1^2$ and
\begin{align}\label{eq:coeffs0and1}
(\partial_1 a_0)^2 + (\partial_1 a_1)^2 + \tfrac{\pi^2}{4} a_1^2 &= (\partial_1 g)^2 + \tfrac{\pi^2}{4} g^2.
\end{align}
Therefore, using the cosine-series representation \eqref{eq:direncosine} of the exchange energy, and
\begin{align*}
\int_{-1}^1 \lvert f(\cdot, x_3) \rvert^2 dx_3 = \sum_{n\geq 0} \lvert a_n \rvert^2 \quad \text{in } L^1(\R),
\end{align*}
we find
\begin{align*}
B(f,f) &= \int_{\R} \biggl((\partial_1 a_0)^2  + (\partial_1 a_1)^2 - \mu \, a_0^2 + (\tfrac{\pi^2}{4}-\mu) a_1^2\\
&\qquad\qquad+ \sum_{n\geq 2} \Bigl( \underbrace{(\partial_1 a_n)^2 + \bigl( (\tfrac{\pi}{2} n)^2 - \mu \bigr) a_n^2}_{\geq 0 + \tfrac{\pi^2}{4} a_n^2} \Bigr) \biggr) \, dx_1\\
&\stackrel{\mathclap{\eqref{eq:coeffs0and1}}}{\geq} \; \int_\R (\partial_1 g)^2 - \mu \, g^2 \, dx_1 + \tfrac{\pi^2}{4} \underbrace{\int_\R g^2 + \sum_{n\geq 2} a_n^2 \, dx}_{\smash[b]{=\int_\Omega f^2 \, dx}}.
\end{align*}
This concludes Step~2, provided
\begin{align}\label{eq:hardymu}
\int_{\R} (\partial_1 g)^2 - \mu \, g^2 \, dx_1 \geq 0 \quad \forall g\in \dot{H}^1(\R) \text{ with } g(0)=0.
\end{align}
For the proof of \eqref{eq:hardymu}, note that by an approximation argument we may assume that $g$ is smooth and $g=0$ in a neighborhood of $x_1=0$. Then, \eqref{eq:hardymu} easily follows using the Hardy decomposition $g = \ell \tfrac{g}{\ell}$, which entails $\partial_1 g = \partial_1 \ell \, \frac{g}{\ell} + \ell \partial_1 (\frac{g}{\ell})$ and hence
\begin{align*}
(\partial_1 g)^2 &= \ell^2 \bigl(\partial_1 (\tfrac{g}{\ell})\bigr)^2 + (\partial_1\ell)^2 (\tfrac{g}{\ell})^2 + \overbrace{\partial_1 (\partial_1 \ell \tfrac{g^2}{\ell}) - (\partial_1 \ell)^2 (\tfrac{g}{\ell})^2 - \ell \partial_1^2 \ell(\tfrac{g}{\ell})^2}^{\smash[t]{=\ell \partial_1 \ell \partial_1 (\frac{g}{\ell})^2}}\\
&\geq \partial_1 (\partial_1 \ell \tfrac{g^2}{\ell}) - \ell \underbrace{\partial_1^2 \ell}_{\mathclap{=-\ell \frac{\pi^2}{2}(1-\ell^2) = -\ell \mu}} (\tfrac{g}{\ell})^2 = \partial_1 (\partial_1 \ell \tfrac{g^2}{\ell}) + \mu \, g^2.
\end{align*}
After integrating over $\R$, the first term on the right hand side vanishes.

Finally, we note that the spectral gap estimate would be saturated for a function $f$ that is defined in Fourier space by $a_n=0$ for $n\geq 2$ and $a_0,a_1$ such that the corresponding $g\in H^1(\R)$ satisfies equality in \eqref{eq:hardymu}. While $g=\ell$ saturates \eqref{eq:hardymu}, it is not in $H^1(\R)$. However, it can be approximated by compactly supported functions $\ell_n$ such that $B(\ell_n,\ell_n) \to 0$ as $n\tou \infty$. Hence, the constant $\tfrac{\pi^2}{4}$ is sharp.

\textbf{Step 3:} \textit{Conclusion}

By Step~2, we have
\begin{align*}
\int_\Omega  \lvert \nabla f \rvert^2 + f^2 \, \mu \, dx= B(f,f)+ 2 \underbrace{\int_\Omega f^2\, \mu \, dx}_{\leq 2 B(f,f)} \leq 5 B(f,f).
\end{align*}
\end{proof}

\subsection{Computation of $E_1$}\label{sec:valueofe1}
Observe that the two minimizers $m^*_2$ as in \eqref{eq:defm2star} and hence also the two possible definitions of $\hat{m}'$ are related\footnote{In case of $\hat{m}_3$ observe that $\int_{-1}^s \hat{m}_1\,dx_3 = -\int_s^1 \hat{m}_1\,dx_3$ for any $s\in[-1,1]$.} via the symmetry $x_3 \leadsto -x_3$. By definition \eqref{eq:defe1}, the energy $E_1$ is invariant under this transformation. Therefore, we may restrict our attention to the case $\sigma = 1$.

As before, we denote $\ell(x_1) := \tanh(\tfrac{\pi}{2}x_1)$. Then
\begin{align}\label{eq:valm2}
m^*_2(x_1,x_3) = \ell(x_1) + \sqrt{2} \sqrt{1-\ell^2(x_1)} \sin(\tfrac{\pi}{2}x_3),
\end{align}
and using the relation $\ell' = \tfrac{\pi}{2}(1-\ell^2)$ one can compute:
\begin{align}
\partial_1  m_2^* &\stackrel{\eqref{eq:valm2}}{=} \tfrac{\pi}{2} (1-\ell^2) - \tfrac{\pi}{\sqrt{2}} \ell \sqrt{1-\ell^2} \sin(\tfrac{\pi}{2}x_3),\label{eq:p1m2}\\
\partial_3  m^*_2 &\stackrel{\eqref{eq:valm2}}{=} \tfrac{\pi}{\sqrt{2}} \sqrt{1-\ell^2} \cos(\tfrac{\pi}{2} x_3).\label{eq:p3m2}
\end{align}

Hence,
\begin{align*}
\dashint_{-1}^1 \lvert \partial_1  m_2^* \rvert^2 dx_3 &\stackrel{\eqref{eq:p1m2}}{=} \tfrac{\pi^2}{4} (1-\ell^2)^2 + \tfrac{\pi^2}{4} \ell^2 (1-\ell^2) = \tfrac{\pi^2}{4} (1-\ell^2),\\
\dashint_{-1}^1 \lvert \partial_3  m_2^* \rvert^2 dx_3 &\stackrel{\eqref{eq:p3m2}}{=} \tfrac{\pi^2}{4} (1-\ell^2),
\end{align*}
which entails \eqref{eq:defmu}:
\begin{align}\label{eq:lagrangemult}
\mu = \dashint_{-1}^1 \lvert \nabla m^*_2 \rvert^2 dx_3 = \tfrac{\pi^2}{2} (1-\ell^2).
\end{align}

Moreover, we have
\begin{align}
\hat{m}_1 &\stackrel{\eqref{eq:defhatm}}{=} \tfrac{1-(m^*_2)^2}{2} \stackrel{\eqref{eq:valm2}}{=} \tfrac{1}{2} (1-\ell^2)\cos(\pi x_3) - \sqrt{2} \ell \sqrt{1-\ell^2} \sin(\tfrac{\pi}{2}x_3),\label{eq:valm1hat}\\
\partial_3  \hat{m}_3 &\stackrel{\eqref{eq:defhatm}}{=} -\partial_1  \hat{m}_1 \stackrel{\eqref{eq:valm1hat}}{=} \tfrac{\pi}{2} \ell (1-\ell^2) \cos(\pi x_3) + \tfrac{\pi}{\sqrt{2}} \sqrt{1-\ell^2} (1-2\ell^2) \sin(\tfrac{\pi}{2} x_3),\label{eq:valp3m3hat}\\
\partial_3  \hat{m}_1 &\stackrel{\eqref{eq:valm1hat}}{=} -\tfrac{\pi}{2} (1-\ell^2) \sin(\pi x_3) - \tfrac{\pi}{\sqrt{2}} \ell \sqrt{1-\ell^2} \cos(\tfrac{\pi}{2} x_3),\label{eq:valp3m1hat}
\end{align}
and
\begin{align}
\hat{m}_3 &= \int_{-1}^{x_3} \partial_3  \hat{m}_3 \, dy_3\notag\\
&\stackrel{\mathclap{\eqref{eq:valp3m3hat}}}{=} \;\, \tfrac{1}{2}\ell(1-\ell^2) \sin(\pi x_3) - \sqrt{2} \sqrt{1-\ell^2}(1-2\ell^2) \cos(\tfrac{\pi}{2} x_3),\label{eq:valm3hat}\\
\partial_1  \hat{m}_3 &\stackrel{\mathclap{\eqref{eq:valm3hat}}}{=} \;\, \tfrac{\pi}{4} (1-\ell^2)(1-3\ell^2) \sin(\pi x_3) + \tfrac{\pi}{\sqrt{2}} \ell \sqrt{1-\ell^2} (5-6\ell^2) \cos(\tfrac{\pi}{2}x_3).\label{eq:valp1m3hat}
\end{align}

This yields
\begin{align*}
\int_{-1}^1 \hat{m}_1^2 \, dx_3 &\stackrel{\eqref{eq:valm1hat}}{=} \tfrac{1}{4} (1-\ell^2)^2 + 2 \ell^2 (1-\ell^2) = \tfrac{1}{8} (2+12\ell^2 - 14\ell^4),\\
\int_{-1}^1 \hat{m}_3^2 \, dx_3 &\stackrel{\eqref{eq:valm3hat}}{=} \tfrac{1}{4} \ell^2 (1-\ell^2)^2 + 2 (1-\ell^2) (1-2\ell^2)^2\\
&=\tfrac{1}{8}(16-78\ell^2+124\ell^4-62\ell^6),
\end{align*}
and
\begin{align}
\int_{-1}^1 \bigl\lvert \partial_1  \hat{m}_1 \bigr\rvert^2 \, dx_3 &\stackrel{\eqref{eq:valp3m3hat}}{=} \tfrac{\pi^2}{4} \ell^2 (1-\ell^2)^2 + \tfrac{\pi^2}{2} (1-\ell^2) (1-2\ell^2)^2\notag\\
&=\tfrac{\pi^2}{16}(1-\ell^2)(8-28\ell^2+28\ell^4),\label{eq:intp1m1}\\
\int_{-1}^1 \bigl\lvert \partial_3  \hat{m}_1 \bigr\rvert^2 \, dx_3 &\stackrel{\eqref{eq:valp3m1hat}}{=} \tfrac{\pi^2}{4} (1-\ell^2)^2 + \tfrac{\pi^2}{2} \ell^2 (1-\ell^2) = \tfrac{\pi^2}{16} (1-\ell^2) (4+4\ell^2),\notag\\
\int_{-1}^1 \bigl\lvert \partial_1  \hat{m}_3 \bigr\rvert^2 \, dx_3 &\stackrel{\eqref{eq:valp1m3hat}}{=} \tfrac{\pi^2}{16} (1-\ell^2)^2 (1-3\ell^2)^2 + \tfrac{\pi^2}{2} \ell^2 (1-\ell^2) (5-6\ell^2)^2\notag\\
&= \tfrac{\pi^2}{16}(1-\ell^2)(1+193\ell^2-465\ell^4+279\ell^6),\notag\\
\int_{-1}^1 \bigl\lvert \partial_3  \hat{m}_3 \bigr\rvert^2 \, dx_3 &\stackrel{\eqref{eq:intp1m1}}{=} \tfrac{\pi^2}{16} (1-\ell^2) (8-28\ell^2+28\ell^4).\notag
\end{align}

Using this and \eqref{eq:lagrangemult} in \eqref{eq:defe1}, and exploiting the relation $\ell' = \tfrac{\pi^2}{2}(1-\ell^2)$, we obtain
\begin{align*}
E_1 &= \tfrac{\pi}{8} \int_{\R} \bigl( 3 + 207\ell^2 - 519\ell^4 + 341\ell^6 \bigr) \ell' \, dx_1.
\end{align*}
By the change of variables $s=\ell(x_1)$ we arrive at
\begin{align*}
E_1 &= \tfrac{\pi}{8} \int_{-1}^1 \! \bigl( 3 + 207 s^2 - 519 s^4 + 341 s^6 \bigr) \, ds = \tfrac{148}{35} \pi.
\end{align*}

\section{Compactness and lower bounds}\label{sec:lower}
In this section, we prove asymptotic lower bounds for $E_\text{asym}(\theta)$ as $\theta\ll 1$. Together with the upper bound in Section~\ref{sec:upper}, they combine to Theorems~\ref{thm:mine0}~and~\ref{thm:mine1}.

Starting point is the following Lemma, which shows that the energy of any magnetization configuration $m_\theta\in X_0\cap X^\theta$ has the structure displayed in \eqref{eq:expansioneasym}:

\medskip
\begin{lem}\label{lem:calc_average_second}
Let $m_\theta \in X_0\cap X^\theta$ be admissible in the definition of $E_\text{asym}(\theta)$, and let $m^*_2\in \Xs$ satisfy the equation \eqref{eq:ele0} (e.g., $m^*_2$ could be one of the minimizers of $E_0$). Let $\hat{m}_\theta$ be such that
\begin{align}\label{eq:defhatmtheta}
  m_\theta = \left( \begin{smallmatrix} \cos\theta\\ \sin\theta \, m^*_2\\ 0 \end{smallmatrix} \right) + \sin^2\theta\,\hat{m}_\theta.
\end{align}
Then
\begin{align}\label{eq:avorth}
2\int_{-1}^1 m^*_2\, \hat{m}_{2,\theta}\, dx_3 = -\sin\theta \int_{-1}^1 \! \lvert \hat{m}_{\theta} \rvert^2 \, dx_3 \quad \text{for every }\, x_1\in \R
\end{align}
and
\begin{align}\label{eq:rewrittenenergy}
\int_\Omega \lvert \nabla m_\theta \rvert^2 dx = \sin^2\theta \int_\Omega \lvert \nabla m^*_2 \rvert^2 dx + \sin^4\theta \, B(\hat{m}_\theta,\hat{m}_\theta),
\end{align}
where
\begin{align}
B(\hat{m}_\theta,\hat{m}_\theta) := \int_\Omega \lvert \nabla \hat{m}_\theta \rvert^2 - \mu \lvert \hat{m}_\theta \rvert^2 dx = \sum_{i=1}^3 B(\hat{m}_{i,\theta},\hat{m}_{i,\theta})\label{def:B}.
\end{align}
\end{lem}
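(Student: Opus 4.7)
The plan is to derive \eqref{eq:avorth} from the pointwise unit-length constraint on $m_\theta$, and then to use it together with the Euler-Lagrange equation \eqref{eq:ele0} in order to identify the cross term arising in the expansion of $\int_\Omega|\nabla m_\theta|^2\,dx$.

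For \eqref{eq:avorth}, I would substitute \eqref{eq:defhatmtheta} into the identity $|m_\theta|^2 = 1$ and cancel $\cos^2\theta + \sin^2\theta = 1$ against the right-hand side; dividing the remainder by $\sin^2\theta$ (the case $\theta = 0$ being trivial) yields the pointwise relation
\begin{align*}
(m^*_2)^2 - 1 + 2\cos\theta\,\hat{m}_{1,\theta} + 2\sin\theta\,m^*_2\,\hat{m}_{2,\theta} + \sin^2\theta\,|\hat{m}_\theta|^2 = 0.
\end{align*}
Averaging this identity over $x_3 \in (-1,1)$ at fixed $x_1$, the first summand vanishes because $m^*_2 \in X^*$ satisfies $\dashint_{-1}^1 (m^*_2)^2\,dx_3 \equiv 1$, and the second vanishes because $m_\theta \in X_0 \cap X^\theta$ forces $\bar{m}_{1,\theta} \equiv \cos\theta$ via \eqref{deftheta}, hence $\overline{\hat{m}_{1,\theta}} \equiv 0$. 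Multiplying the remaining identity by $2$ produces \eqref{eq:avorth}.

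For \eqref{eq:rewrittenenergy}, expanding $|\nabla m_\theta|^2$ via \eqref{eq:defhatmtheta} gives
\begin{align*}
|\nabla m_\theta|^2 = \sin^2\theta\,|\nabla m^*_2|^2 + 2\sin^3\theta\,\nabla m^*_2 \cdot \nabla \hat{m}_{2,\theta} + \sin^4\theta\,|\nabla \hat{m}_\theta|^2.
\end{align*}
After integrating and comparing with the target \eqref{eq:rewrittenenergy}, the claim reduces to the cross-term identity
\begin{align*}
2\int_\Omega \nabla m^*_2 \cdot \nabla \hat{m}_{2,\theta}\,dx = -\sin\theta \int_\Omega \mu(x_1)\,|\hat{m}_\theta|^2\,dx.
\end{align*}
Using \eqref{eq:avorth} slice-by-slice in $x_1$ and invoking Fubini (the key point being that $\mu$ depends only on $x_1$), the right-hand side equals $2\int_\Omega \mu(x_1)\,m^*_2\,\hat{m}_{2,\theta}\,dx$, so the remaining task is to apply the Euler-Lagrange equation \eqref{eq:ele0} with test function $\hat{m}_{2,\theta}$.

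The main obstacle is that \eqref{eq:ele0} is a priori only stated for test functions $f \in \dot{H}^1(\Omega)$ with $\bar{f}(0) = 0$, whereas $\hat{m}_{2,\theta}$ need not satisfy this normalization. I would circumvent this by applying \eqref{eq:ele0} to $\tilde{f} := \hat{m}_{2,\theta} - \overline{\hat{m}_{2,\theta}}(0) \in \dot{H}^1(\Omega)$, which does satisfy $\bar{\tilde{f}}(0) = 0$ and has the same gradient as $\hat{m}_{2,\theta}$. The resulting identity differs from what we want by the correction $-\overline{\hat{m}_{2,\theta}}(0)\int_\Omega m^*_2\,\mu\,dx$. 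Using the explicit formulas \eqref{eq:defm2star} and \eqref{eq:defmu} together with Fubini, one sees that $\dashint_{-1}^1 m^*_2\,dx_3 = \ell(x_1)$ is odd in $x_1$ while $\mu$ is even, so $\int_\Omega m^*_2\,\mu\,dx = 0$. The correction therefore vanishes and the required cross-term identity follows.
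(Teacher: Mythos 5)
Your proof is correct and follows essentially the same two-step strategy as the paper: derive \eqref{eq:avorth} from the pointwise constraint $\lvert m_\theta\rvert^2=1$ together with $\dashint (m_2^*)^2\,dx_3\equiv 1$ and $\dashint\hat{m}_{1,\theta}\,dx_3\equiv 0$, then plug the Euler--Lagrange identity \eqref{eq:ele0} and \eqref{eq:avorth} into the cross term of the expansion of $\int_\Omega\lvert\nabla m_\theta\rvert^2\,dx$.

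One genuine added value in your write-up is the careful treatment of the normalization $\bar{f}(0)=0$ in \eqref{eq:ele0}. The paper applies \eqref{eq:ele0} directly with $f=\hat{m}_{2,\theta}$ even though $\overline{\hat{m}_{2,\theta}}(0)$ need not vanish (only $\bar{m}_2^*(0)=0$ is guaranteed, so $\overline{\hat{m}_{2,\theta}}(0)=\sin^{-2}\theta\,\bar{m}_{2,\theta}(0)$ can be nonzero). Your resolution---shift by the constant $\overline{\hat{m}_{2,\theta}}(0)$ and observe that the correction term $\int_\Omega m_2^*\,\mu\,dx$ vanishes because $\dashint_{-1}^1 m_2^*\,dx_3=\ell(x_1)$ is odd while $\mu$ is even---is exactly right, and shows that \eqref{eq:ele0} in fact extends to all $f\in\dot{H}^1(\Omega)$ with $\int_\Omega m_2^*\,f\,\mu\,dx$ finite, without the normalization at $x_1=0$. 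This is a clean way to make the paper's implicit step rigorous.
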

\begin{proof}
From $\lvert m_\theta \rvert^2 = 1$ and \eqref{eq:defhatmtheta} we obtain
\begin{align*}
1 = \cos^2\theta + 2\cos\theta \, \sin^2\theta\, \hat{m}_{1,\theta} + \sin^4\theta \,\hat{m}_{1,\theta}^2 +  m_{2,\theta}^2 + \sin^4\theta \, \hat{m}_{3,\theta}^2.
\end{align*}
Integrating over $(-1,1)$ in $x_3$ and using $\dashint_{-1}^1 \hat{m}_{1,\theta}\,dx_3 = 0$ (recall $\bar{m}_{1,\theta}\equiv \cos\theta$) yields
\begin{align*}
\sin^2\theta = 1-\cos^2\theta = \sin^2\theta\dashint_{-1}^1 \underbrace{(m_2^*+\sin \theta \, \hat{m}_{2,\theta})^2}_{\smash[b]{=m_{2,\theta}^2/\sin^2\theta}} dx_3 + \sin^4\theta \dashint_{-1}^1  \lvert \hat{m}_\theta' \rvert^2 \, dx_3.
\end{align*}
Dividing by $\sin^2\theta$, we obtain:
\begin{align*}
1 = \dashint_{-1}^1 \Bigl( (m^*_2)^2 + 2\sin\theta\, m^*_2\, \hat{m}_{2,\theta} + \sin^2\theta\,\lvert \hat{m}_\theta \rvert^2 \Bigr) \, dx_3.
\end{align*}
Now, using $\dashint_{-1}^1 (m_2^*)^2\,  dx_3 \equiv 1$, we conclude \eqref{eq:avorth}.

For \eqref{eq:rewrittenenergy}, we use the Euler-Lagrange equation \eqref{eq:ele0} associated to $m_2^*$ together with \eqref{eq:avorth}:
\begin{align*}
 \MoveEqLeft \sin^{-4}\theta \Bigl( \int_\Omega \lvert \nabla m_\theta \rvert^2 dx - \sin^2\theta \int_\Omega \lvert \nabla m^*_2 \rvert^2 dx \Bigr)\\
 &\stackrel{\eqref{eq:defhatmtheta}}{=} \int_\Omega \Bigl( \lvert \nabla \hat{m}_{\theta} \rvert^2 + 2\sin^{-1}\theta \, \nabla m^*_2 \cdot \nabla \hat{m}_{2,\theta} \Bigr)\, dx\\
  &\stackrel{\eqref{eq:ele0}}{=} \int_\Omega \Bigl(\lvert \nabla \hat{m}_\theta \rvert^2 + \, 2\sin^{-1}\theta \,\mu(x_1) m^*_2 \, \hat{m}_{2,\theta} \Bigr)\, dx\\
  &\stackrel{\eqref{eq:avorth}}{=} \int_\Omega \Bigl(\lvert \nabla \hat{m}_\theta \rvert^2 - \mu(x_1) \, \lvert \hat{m}_\theta \rvert^2\Bigr)\,  dx.\qedhere
\end{align*}
\end{proof}
Hence, for a family of minimizers $\{m_\theta\}_\theta$ of $\EA$ it remains to find a suitable $\mds$ -- a minimizer of \eqref{eq:defe0} for an appropriate choice of $\sigma\in\{\pm 1\}$ -- such that the corresponding term $B(\hat{m}_\theta,\hat{m}_\theta)$ (see \eqref{eq:defhatmtheta} and \eqref{eq:rewrittenenergy}) can be controlled. 
We are going to prove that $B(\hat{m}_{2,\theta},\hat{m}_{2,\theta})$ is negligible for $\theta\tod 0$ and $B(\hat{m}_\theta',\hat{m}_\theta') \to E_1$.

\subsection{Outline of the proof}\label{sec:outlineprooflb}
While obtaining an asymptotic lower bound on $E_\text{asym}(\theta)$ in terms of $E_0$ is almost straightforward (cf. Lemma~\ref{lem:compleadingm1} below), using the concentration-compactness type result \cite[Lemma~1]{dioreducedmodel13}, it requires more work to establish rigorously that the coefficient $E_1$ in \eqref{eq:expansioneasym} is given by \eqref{eq:defe1}.

Main problem here is that -- a priori -- the term $B(\hat{m}_\theta,\hat{m}_\theta)$ in \eqref{eq:rewrittenenergy} is lacking a sign, while the explicit construction in Section~\ref{sec:upper} below (see Proposition~\ref{prop:upperbd}) just provides an upper bound (note that, due to $\mu(0)=\tfrac{\pi^2}{2}$, even with optimal constants, Poincar\'e's and Hardy's inequalities yield a sign for $B$ only away from the origin, where $\mu$ is small).

We overcome this difficulty in the following way: Exploiting the relation 
\begin{align*}
m_{1,\theta}\approx\sqrt{1-m_{2,\theta}^2-m_{3,\theta}^2} \approx 1 - \sin^2\theta \tfrac{(m_2^*+\sin\theta\,\hat{m}_{2,\theta})^2 + (\sin\theta\,\hat{m}_{3,\theta})^2}{2} \quad\text{for }\theta\ll1,
\end{align*}
leading-order control over $m_{2,\theta}$ and $m_{3,\theta}$ in fact suffices to prove that $\{\hat{m}_{1,\theta}\}_\theta$ and $\{\partial_1 \hat{m}_{1,\theta}\}_\theta$ are bounded in $L^2_\text{loc}(\Omega)$ and $L^1(\Omega,\mu dx)$, respectively. Using the stray-field constraint $\partial_1 \hat{m}_{1,\theta}=-\partial_3 \hat{m}_{3,\theta}$ and a suitable interpolation inequality, $L^2(\Omega,\mu dx)$-control can be transferred to $\{\hat{m}_{3,\theta}\}_\theta$. Due to the exponential tails of the density $\mu$, local control of $\hat{m}_{1,\theta}$ suffices to render $B(\hat{m}_\theta',\hat{m}_\theta')$ harmless. In fact, $B(\hat{m}_\theta',\hat{m}_\theta')$ provides $H^1$-control of $\hat{m}_\theta'$.

For $B(\hat{m}_{2,\theta},\hat{m}_{2,\theta})$, we use that $B$ is the Hessian of the problem \eqref{eq:defe0} defining $E_0$ (cf. Section~\ref{sec:hesse0}) and satisfies a spectral gap inequality. Thus, it provides some control over tangent vectors to the ``manifold'' $\Xs$ at $m_2^*$. Unfortunately, $\hat{m}_{2,\theta}$ is not exactly tangential, but, by \eqref{eq:avorth}, it is at least formally an approximation to a tangent vector. Using part of the control of $\hat{m}_\theta'$ coming from $B(\hat{m}_{\theta}',\hat{m}_{\theta}')$, this suffices to prove that asymptotically, $B(\hat{m}_{2,\theta},\hat{m}_{2,\theta})$ still controls $\hat{m}_{2,\theta}$ in $\dot{H}^1(\Omega)$, and in particular is non-negative. This finally yields the lower bound for the expansion of $E_\text{asym}(\theta)$ at order $\sin^4\theta$.

The details of this procedure will be given in the following Section~\ref{sec:compactnessasym}, Proposition~\ref{prop:compactnessasym} summarizing the compactness results that we obtain for sequences of magnetization configurations $m_\theta\in X_0\cap X^\theta$ whose energy satisfies the upper bound of Proposition~\ref{prop:upperbd}.

\subsection{Compactness and lower bounds for minimizers of $E_\text{asym}(\theta)$}\label{sec:compactnessasym}
Our main result is the following:
\medskip
\begin{prop}\label{prop:compactnessasym}
For $0<\theta\ll 1$, let $m_\theta\in X_0\cap X^\theta$ be admissible in the definition of $E_\text{asym}(\theta)$ and satisfy the bound
\begin{align}\label{eq:secondorderupperbound}
\int_\Omega \lvert \nabla m_\theta \rvert^2 dx \leq E_0 \sin^2\theta + C\sin^4\theta
\end{align}
for some fixed positive constant $C>0$, where $E_0=4\pi$ is given in \eqref{eq:defe0}. Then, up to a suitable translation in $x_1$ and a subsequence, we have $m_{2,\theta}=\sin\theta \, m_2^* +\so(\sin\theta)$ in $\dot{H}^1(\Omega)$, for an $m_2^*$ as in \eqref{eq:defm2star}. Let $\hat{m}_\theta$ be as in \eqref{eq:defhatmtheta}.

Then, up to another subsequence, for $\mu=\mu(x_1)$ as in \eqref{eq:defmu},
\begin{align*}
\hat{m}_\theta' \wto \hat{m}'\quad \text{weakly in } H^1(\Omega) \text{ and strongly in } L^2(\Omega,\mu dx)\text{ as } \theta\tod 0,
\end{align*}
where $\hat{m}_1$ and $\hat{m}_3$ are given in \eqref{eq:defhatm}.

Moreover, we have the lower bounds
\begin{align}\label{eq:secondorderliminf}
\begin{aligned}
\MoveEqLeft\liminf_{\theta\tod 0} \;\sin^{-4}\theta \Bigl(\int_\Omega \lvert \nabla m_\theta \rvert^2 dx - \sin^2\theta \overbrace{\int_\Omega \lvert \nabla m_2^* \rvert^2 dx}^{\smash[t]{=E_0 = 4\pi\text{ by Prop.~\ref{prop:minofe0}}}} \Bigr)\\
&\geq \int_\Omega \lvert \nabla \hat{m}' \rvert^2 - \mu(x_1) \lvert \hat{m}' \rvert^2 \, dx = B(\hat{m}',\hat{m}') = E_1.%\label{def:e1}
\end{aligned}
\end{align}
and, for some $\varepsilon>0$,
\begin{align}\label{eq:secondorderlimsup}
\begin{aligned}
\MoveEqLeft\limsup_{\theta\tod 0} \; \sin^{-4}\theta \Bigl( \int_\Omega \lvert \nabla m_\theta \rvert^2 dx - E_0\, \sin^2\theta - E_1 \,\sin^4\theta \Bigr)\\
&\geq \varepsilon \limsup_{\theta \tod 0} \int_\Omega \lvert \nabla \hat{m}_{2,\theta} \rvert^2 dx.
\end{aligned}
\end{align}
\end{prop}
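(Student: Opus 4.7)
The starting point is the rewriting from Lemma~\ref{lem:calc_average_second}: for any $m_2^*\in\Xs$ satisfying \eqref{eq:ele0},
\[ \int_\Omega|\nabla m_\theta|^2\,dx = \sin^2\theta\int_\Omega|\nabla m_2^*|^2\,dx + \sin^4\theta\,B(\hat m_\theta,\hat m_\theta), \]
so the assumption \eqref{eq:secondorderupperbound} immediately produces the uniform bound $B(\hat m_\theta,\hat m_\theta)\leq C$. To obtain leading-order compactness I would apply the concentration-compactness argument of Lemma~\ref{lem:compleadingm1} (see also \cite[Lemma~1]{dioreducedmodel13}) to $f_\theta := m_{2,\theta}/\sin\theta$, whose Dirichlet energy satisfies $\int_\Omega|\nabla f_\theta|^2\,dx\leq E_0 + O(\sin^2\theta)$. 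After a suitable $x_1$-translation chosen so that $\overline{m_{2,\theta}}(0)=0$, and along a subsequence, $f_\theta$ converges to one of the two minimizers $m_2^*$ from Proposition~\ref{prop:minofe0}; matching the leading-order energy forces the convergence to be strong in $\dot H^1(\Omega)$. Fix this $m_2^*$ and define $\hat m_\theta$ by \eqref{eq:defhatmtheta}.

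Next I would recover the limit of $\hat m_\theta'$ from the unit-length constraint. Expanding $|m_\theta|^2=1$ yields the identity
\[ 2\cos\theta\,\hat m_{1,\theta}=1-(m_2^*+\sin\theta\,\hat m_{2,\theta})^2-\sin^2\theta\bigl(\hat m_{1,\theta}^2+\hat m_{3,\theta}^2\bigr), \]
which, combined with the $\dot H^1$-convergence of $m_{2,\theta}/\sin\theta$, identifies $\hat m_{1,\theta}\to\hat m_1=\tfrac{1-(m_2^*)^2}{2}$ in $L^2_\text{loc}$. Differentiating in $x_1$ and using $|\partial_1 m_2^*|^2\lesssim\mu$ (which follows from \eqref{eq:p1m2} and \eqref{eq:lagrangemult}) gives bounds on $\partial_1\hat m_{1,\theta}$ in a suitable weighted space. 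The flux-closure relation $\partial_3\hat m_{3,\theta}=-\partial_1\hat m_{1,\theta}$ together with $\hat m_{3,\theta}|_{\partial\Omega}=0$ then recovers $\hat m_{3,\theta}$ by integration in $x_3$; following the strategy sketched in Section~\ref{sec:outlineprooflb}, an interpolation argument using $\hat m_{3,\theta}|_{\partial\Omega}=0$ and the divergence constraint upgrades these to full $H^1(\Omega)$-bounds on $\hat m_\theta'$. The exponential decay of $\mu$ upgrades weak $H^1$-convergence to strong $L^2(\Omega,\mu\,dx)$-convergence $\hat m_\theta'\to\hat m'$, with the limit uniquely characterized by \eqref{eq:defhatm}.

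For the lower bound I decompose $B(\hat m_\theta,\hat m_\theta)=B(\hat m_\theta',\hat m_\theta')+B(\hat m_{2,\theta},\hat m_{2,\theta})$. The first piece is treated by weak $H^1$-lower semicontinuity of $\int|\nabla\hat m_\theta'|^2\,dx$ combined with strong $L^2(\Omega,\mu\,dx)$-convergence of the negative term, giving $\liminf B(\hat m_\theta',\hat m_\theta')\geq B(\hat m',\hat m')=E_1$. The main obstacle is the second piece: $\hat m_{2,\theta}$ fails to be tangential to the constraint $\dashint(m_2^*)^2\,dx_3=1$, since \eqref{eq:avorth} only yields $\int_{-1}^1 m_2^*\,\hat m_{2,\theta}\,dx_3=O(\sin\theta)$. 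I would therefore split slicewise as $\hat m_{2,\theta}=\hat m_{2,\theta}^\top+c_\theta(x_1)\,m_2^*$ with $\int_{-1}^1 m_2^*\,\hat m_{2,\theta}^\top\,dx_3=0$; by \eqref{eq:avorth}, both $c_\theta$ and $\partial_1 c_\theta$ are of order $\sin\theta$ and can be absorbed using the bounds already secured on $\hat m_\theta'$. Applying Proposition~\ref{prop:spectralgap} to $\hat m_{2,\theta}^\top$ (which inherits $\overline{\hat m_{2,\theta}^\top}(0)=0$ from the normalization) then asymptotically yields
\[ B(\hat m_{2,\theta},\hat m_{2,\theta})\geq\tfrac{1}{5}\int_\Omega|\nabla\hat m_{2,\theta}|^2\,dx - o(1), \]
which simultaneously delivers the nonnegativity needed for \eqref{eq:secondorderliminf} and, for any $\varepsilon<\tfrac{1}{5}$, the statement \eqref{eq:secondorderlimsup}. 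The most delicate bookkeeping is verifying that the cost of the normal correction $c_\theta\,m_2^*$ in $B$ is indeed $o(1)$, since $m_2^*$ itself sits on the boundary of the Hardy-type inequality \eqref{eq:hardymu} that underlies the spectral gap.
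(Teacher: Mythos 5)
Your proposal follows the paper's strategy very closely: leading-order compactness via Lemmas~\ref{lem:compleadingm1} and~\ref{lem:compsecondleadingm3}, the decomposition $B(\hat m_\theta,\hat m_\theta)=B(\hat m_\theta',\hat m_\theta')+B(\hat m_{2,\theta},\hat m_{2,\theta})$ from Lemma~\ref{lem:calc_average_second}, weak lower semicontinuity for the primed piece, and a slicewise projection plus the spectral gap of Proposition~\ref{prop:spectralgap} for the $\hat m_{2,\theta}$-piece. The overall skeleton is the right one and you correctly flag the crux of the matter. However, you stop exactly at the point where the real work begins, and two of your supporting assertions are not quite right.

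First, ``$c_\theta$ and $\partial_1 c_\theta$ are of order $\sin\theta$'' is misleading: from \eqref{eq:avorth} one only has $c_\theta(x_1)=-\tfrac{\sin\theta}{4}\int_{-1}^1|\hat m_\theta|^2\,dx_3$, and $\int_{-1}^1|\hat m_\theta|^2\,dx_3$ has no uniform bound available at this stage (one only knows $\sin^2\theta\int_\Omega|\nabla\hat m_\theta|^2\,dx=\so(1)$). The paper's Lemma~\ref{lem:hessest}, Step~1, makes this precise via the Sobolev embedding $H^1\hookrightarrow L^4$ applied to $\hat m_\theta\mu^{1/8}$, yielding $\int_\Omega c_\theta^2\,\mu^{1/2}\,dx \leq C\sin^2\theta\bigl(\int_\Omega|\nabla\hat m_\theta|^2\,dx\bigr)^2=\so(1)\int_\Omega|\nabla\hat m_\theta|^2\,dx$ --- a weighted smallness, not a pointwise $O(\sin\theta)$. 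Second, the cross term $2B(\hat m_{2,\theta}^\top,c_\theta m_2^*)$ does not succumb to a naive absorption argument, and this is precisely the ``delicate bookkeeping'' you acknowledge but do not resolve. The paper's essential idea (Lemma~\ref{lem:hessest}, Step~2) is to invoke the Euler--Lagrange equation \eqref{eq:ele0} for $m_2^*$, together with the slicewise orthogonality $\int_{-1}^1 m_2^*\,\hat m_{2,\theta}^\top\,dx_3=0$ and its $x_1$-derivative, to collapse the cross term to $2\int_\Omega c_\theta\,\partial_1(\hat m_{2,\theta}^\top\,\partial_1 m_2^*)\,dx$, which is then controlled by Young's and Hardy's inequalities because $|\partial_1 m_2^*|,|\partial_1^2 m_2^*|\lesssim\mu^{1/2}$. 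Without this integration by parts the spectral gap does not directly deliver the claimed inequality. Finally, note that the $\so(1)$-remainder in \eqref{eq:hessest} is in terms of $\int_\Omega|\nabla\hat m_\theta'|^2\,dx$, so to conclude that it is actually $\so(1)$ one must bootstrap: combine Step~1 of the paper's proof of Proposition~\ref{prop:compactnessasym} (the $\delta$-weighted bound on $\int|\hat m_\theta'|^2\mu\,dx$) with the uniform upper bound $B(\hat m_\theta,\hat m_\theta)\leq C$ and Lemma~\ref{lem:hessest} to first obtain a uniform $\dot H^1$-bound on $\hat m_\theta'$; your proposal sketches the ingredients but not the bootstrap itself.
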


\begin{cor}\label{cor:compactnessasym}
Provided the upper bound in Proposition~\ref{prop:compactnessasym} holds in the form
\begin{align}\label{eq:strongerupperbound}
\int_\Omega \lvert \nabla m_\theta \rvert^2 dx \leq E_0 \sin^2\theta + E_1\sin^4\theta + \so(\sin^4\theta),
\end{align}
we have strong convergence $\hat{m}_\theta \to ( \hat{m}_1,0,\hat{m}_3)$ in $\dot{H}^1(\Omega) \cap L^2(\Omega,\mu dx)$ as $\theta\tod 0$.
\end{cor}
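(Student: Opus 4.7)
The plan is to derive the Corollary as a direct consequence of Proposition~\ref{prop:compactnessasym} by squeezing its asymptotic lower bounds with the sharper upper bound \eqref{eq:strongerupperbound}. Both bounds in \eqref{eq:secondorderliminf} and \eqref{eq:secondorderlimsup} become asymptotic equalities, which upgrades the weak $H^1$ convergence of $\hat{m}_\theta'$ to strong convergence and gives the convergence $\hat m_{2,\theta}\to 0$ in both the $\dot H^1(\Omega)$ and the $L^2(\Omega,\mu dx)$ topology.

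First, I would exploit \eqref{eq:secondorderlimsup} directly: combined with the hypothesis \eqref{eq:strongerupperbound} it yields
\[ 0 \geq \varepsilon \, \limsup_{\theta\tod 0} \int_\Omega \lvert \nabla \hat m_{2,\theta} \rvert^2 \, dx, \]
so $\hat m_{2,\theta} \to 0$ strongly in $\dot H^1(\Omega)$. Next, the identity \eqref{eq:rewrittenenergy} rewrites
\[ B(\hat m_\theta,\hat m_\theta) = \sin^{-4}\theta \Bigl( \int_\Omega \lvert \nabla m_\theta\rvert^2 dx - E_0 \sin^2\theta \Bigr); \]
\eqref{eq:strongerupperbound} gives $\limsup B(\hat m_\theta,\hat m_\theta) \leq E_1$, while \eqref{eq:secondorderliminf} gives $\liminf B(\hat m_\theta,\hat m_\theta) \geq E_1$, so that $B(\hat m_\theta,\hat m_\theta) \to E_1$.

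The key step is then to split this convergence across the decomposition \eqref{def:B}: $B(\hat m_\theta,\hat m_\theta) = B(\hat m_\theta',\hat m_\theta') + B(\hat m_{2,\theta},\hat m_{2,\theta})$. Weak $H^1$ lower semicontinuity of the Dirichlet term, together with the strong $L^2(\Omega,\mu dx)$ convergence $\hat m_\theta'\to \hat m'$ supplied by Proposition~\ref{prop:compactnessasym}, yields $\liminf B(\hat m_\theta',\hat m_\theta') \geq B(\hat m',\hat m')= E_1$. On the other hand, the spectral gap argument already used in the proof of Proposition~\ref{prop:compactnessasym} (applied to a tangential correction of $\hat m_{2,\theta}$, whose cost is controlled via the near-tangentiality relation \eqref{eq:avorth} together with the already established $\dot H^1$ convergence $\hat m_{2,\theta}\to 0$ and $H^1$-boundedness of $\hat m_\theta'$) gives $\liminf B(\hat m_{2,\theta},\hat m_{2,\theta}) \geq 0$. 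Since the two liminfs sum to at least $E_1$ and the total converges to $E_1$, both inequalities must be equalities in the limit, i.e.
\[ B(\hat m_\theta',\hat m_\theta') \to E_1 = B(\hat m',\hat m'), \qquad B(\hat m_{2,\theta},\hat m_{2,\theta}) \to 0. \]

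Finally, I would convert these $B$-convergences into strong convergences. For $\hat m_\theta'$: the strong $L^2(\Omega,\mu dx)$ convergence gives $\lVert \sqrt{\mu}\,\hat m_\theta'\rVert_{L^2}^2 \to \lVert \sqrt{\mu}\,\hat m'\rVert_{L^2}^2$, so combining with $B(\hat m_\theta',\hat m_\theta')\to B(\hat m',\hat m')$ yields $\lVert \nabla \hat m_\theta'\rVert_{L^2}^2 \to \lVert \nabla \hat m'\rVert_{L^2}^2$, which together with the weak $H^1$ convergence upgrades to strong $\dot H^1(\Omega)$ convergence. For $\hat m_{2,\theta}$: applying the spectral gap \eqref{eq:sghess} to the tangential correction of $\hat m_{2,\theta}$ and invoking $B(\hat m_{2,\theta},\hat m_{2,\theta})\to 0$ delivers $\lVert \sqrt{\mu}\,\hat m_{2,\theta}\rVert_{L^2}^2 \to 0$. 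The main technical obstacle is precisely the handling of the near-tangential error terms when invoking the spectral gap; however this bookkeeping is essentially identical to the one carried out to establish \eqref{eq:secondorderliminf} in Proposition~\ref{prop:compactnessasym}, so it can be taken over directly.
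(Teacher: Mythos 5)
Your proposal is correct and follows essentially the same route as the paper: first use \eqref{eq:secondorderlimsup} and \eqref{eq:strongerupperbound} to squeeze $\int_\Omega \lvert\nabla\hat m_{2,\theta}\rvert^2\,dx\to 0$, then identify the limits of $B(\hat m_\theta',\hat m_\theta')$ and $B(\hat m_{2,\theta},\hat m_{2,\theta})$ from the decomposition \eqref{def:B}, and finally upgrade weak to strong $\dot H^1$-convergence of $\hat m_\theta'$ via convergence of norms. The one detour is your re-invocation of the spectral-gap argument (tangential correction of $\hat m_{2,\theta}$ plus near-tangentiality \eqref{eq:avorth}) to get $\liminf B(\hat m_{2,\theta},\hat m_{2,\theta})\geq 0$ and to extract $\lVert\sqrt{\mu}\,\hat m_{2,\theta}\rVert_{L^2}\to 0$: once you already know $\hat m_{2,\theta}\to 0$ strongly in $\dot H^1(\Omega)$, Hardy's inequality \eqref{eq:hardy} immediately gives $\hat m_{2,\theta}\to 0$ in $L^2(\Omega,\mu\,dx)$, so $B(\hat m_{2,\theta},\hat m_{2,\theta})\to 0$ outright with no further spectral-gap bookkeeping. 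Since $B_2\to 0$ converges, $\limsup$ and $\liminf$ of $B(\hat m_\theta',\hat m_\theta')-E_1$ then coincide at $0$ from \eqref{eq:strongerupperbound}, \eqref{eq:rewrittenenergy} and \eqref{eq:secondorderliminf}, exactly as you describe at the end.
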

\begin{proof}[Proof of Corollary~\ref{cor:compactnessasym}]
By \eqref{eq:secondorderlimsup}, Lemma~\ref{lem:calc_average_second} and the stronger upper bound \eqref{eq:strongerupperbound}, we have
\begin{align*}
0 \stackrel{\eqref{eq:strongerupperbound}}{=} \limsup_{\theta\tod 0} \Bigl( \bigl(B(\hat{m}_\theta',\hat{m}_\theta') -E_1\bigr) + B(\hat{m}_{2,\theta},\hat{m}_{2,\theta}) \Bigr) \stackrel{\eqref{eq:secondorderlimsup}}{\geq} \varepsilon \limsup_{\theta\tod 0}\int_\Omega \lvert \nabla \hat{m}_{2,\theta} \rvert^2 dx.
\end{align*}
Therefore, $\hat{m}_{2,\theta}\to 0$ in $\dot{H}^1(\Omega)$ and thus -- by Hardy's inequality \eqref{eq:hardy} -- in $L^2(\Omega,\mu dx)$ as $\theta \tod 0$. In particular, $B(\hat{m}_{2,\theta},\hat{m}_{2,\theta}) \to 0$. By \eqref{eq:secondorderliminf}, this yields
\begin{align*}
0 = \limsup_{\theta\tod 0} \bigl(B(\hat{m}_\theta',\hat{m}_\theta') -E_1\bigr) \geq \liminf_{\theta\tod 0} \bigl(B(\hat{m}_\theta',\hat{m}_\theta') -E_1\bigr) \stackrel{\eqref{eq:secondorderliminf}}{\geq} 0,
\end{align*}
i.e. $B(\hat{m}_\theta',\hat{m}_\theta') \to B(\hat{m}',\hat{m}')$ as $\theta\tod 0$. In view of the strong convergence of $\hat{m}_\theta'$ in $L^2(\Omega,\mu dx)$, this implies convergence $\int_\Omega \lvert \nabla \hat{m}_\theta' \rvert^2 dx \to \int_\Omega \lvert \nabla \hat{m}' \rvert^2 dx$. Therefore, $\hat{m}_\theta' \to \hat{m}'$ strongly in $\dot{H}^1(\Omega)$ as $\theta\tod 0$.
\end{proof}

The proof of Proposition~\ref{prop:compactnessasym} consists of several steps. In the first one, Lemma~\ref{lem:compleadingm1}, we prove a compactness result for sequences of magnetization configurations $m_\theta\in X_0\cap X^\theta$ with exchange energy of order $\sin^2\theta$ in the limit $\theta\tod 0$. 
\medskip
\begin{lem}\label{lem:compleadingm1}
For $0<\theta \ll 1$, let $m_\theta \in X_0\cap X^\theta$ be admissible in the definition of $E_\text{asym}(\theta)$ and satisfy the bound
\begin{align}\label{eq:apriorileading}
  \int_{\Omega} \lvert \nabla m_\theta \rvert^2 dx \leq C\sin^2\theta
\end{align}
for some fixed positive constant $C>0$. Define $m_{2,\theta}^*$ and $\hat{m}'_\theta$ by
\begin{align}\label{eq:defrescm}
  m_\theta = \left( \begin{smallmatrix} \cos\theta\\ \sin\theta \, m^*_{2,\theta}\\ 0 \end{smallmatrix} \right) + \sin^2\theta \, \left( \begin{smallmatrix} \hat{m}_{1,\theta}\\ 0\\ \hat{m}_{3,\theta} \end{smallmatrix} \right).
\end{align}
Then, up to translations in $x_1$ and for a subsequence in $\theta$, there exists $f \in \Xs$, i.e. admissible in $E_0$, such that for $\theta\tod 0$
\begin{itemize}
\item $m_{2,\theta}^* \wto f$ in $\dot{H}^1(\Omega)$,
\item $\hat{m}_{1,\theta} \to \frac{1-f^2}{2} =: \hat{m}_1$ in $L^p_\text{loc}(\Omega)$ for any $p\geq 1$,
\item $\sin\theta \, \hat{m}_{3,\theta} \wto 0$ in $H^1(\Omega)$.
\end{itemize}
\end{lem}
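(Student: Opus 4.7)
My plan is in three stages.

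\textbf{Stage 1 (compactness of the rescaled second component).} From the bound \eqref{eq:apriorileading}, $m^*_{2,\theta}=m_{2,\theta}/\sin\theta$ is uniformly bounded in $\dot{H}^1(\Omega)$ and satisfies $m^*_{2,\theta}(\pm\infty,\cdot)=\pm 1$ in the sense of \eqref{convent}. Since admissibility in $E_0$ forces a sign change at infinity, standard weak compactness fails; I would invoke the concentration-compactness lemma \cite[Lemma~1]{dioreducedmodel13} to obtain a subsequence and translations $t_\theta\in\R$ such that $m^*_{2,\theta}(\cdot+t_\theta,\cdot)\wto f$ weakly in $\dot{H}^1(\Omega)$, with $f\in\dot{H}^1(\Omega)$ keeping the boundary conditions $f(\pm\infty,\cdot)=\pm 1$. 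A final translation enforces $\bar f(0)=0$. The remaining constraint $\dashint_{-1}^1 f^2\,dx_3=1$ needed for $f\in \Xs$ will only be verified at the end.

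\textbf{Stage 2 (algebraic identity and weak compactness of $\sin\theta\,\hat m_{3,\theta}$).} The divergence-free condition $\nabla\cdot m_\theta'=0$ together with $m_{3,\theta}=0$ on $\partial\Omega$ gives, after integrating over $x_3$, $\partial_1\bar m_{1,\theta}\equiv 0$; by the boundary values at infinity, $\bar m_{1,\theta}\equiv\cos\theta$, i.e., $\dashint_{-1}^1\hat m_{1,\theta}\,dx_3=0$. Substituting \eqref{eq:defrescm} into $|m_\theta|^2=1$ and simplifying yields the pointwise identity
\begin{equation*}
2\cos\theta\,\hat m_{1,\theta}+\sin^2\theta\bigl(\hat m_{1,\theta}^2+\hat m_{3,\theta}^2\bigr)=1-(m^*_{2,\theta})^2 \quad \text{in } \Omega.
\end{equation*}
Solving this quadratic in $\hat m_{1,\theta}$ and picking the branch $m_{1,\theta}\geq 0$ (available for $\theta\ll 1$) gives the explicit representation
\begin{equation*}
\hat m_{1,\theta}=\frac{1-(m^*_{2,\theta})^2-\sin^2\theta\,\hat m_{3,\theta}^2}{\cos\theta+\sqrt{1-\sin^2\theta\,(m^*_{2,\theta})^2-\sin^4\theta\,\hat m_{3,\theta}^2}}.
\end{equation*}
In parallel, Poincar\'e's inequality applied to $m_{3,\theta}$ (with zero trace on $\partial\Omega$) yields $\|m_{3,\theta}\|_{L^2(\Omega)}\leq C\|\partial_3 m_{3,\theta}\|_{L^2}\leq C\sin\theta$, so $\sin\theta\,\hat m_{3,\theta}=m_{3,\theta}/\sin\theta$ is uniformly bounded in $H^1(\Omega)$. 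Up to a further subsequence, $\sin\theta\,\hat m_{3,\theta}\wto g$ weakly in $H^1(\Omega)$ for some $g\in H^1(\Omega)$ with $g|_{\partial\Omega}=0$ by continuity of the trace.

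\textbf{Stage 3 (the main obstacle: $g\equiv 0$, then passage to the limit).} The trace condition $g|_{\partial\Omega}=0$ alone does not pin down $g$; this is the heart of the proof. My plan is to use the divergence-free constraint $\partial_1\hat m_{1,\theta}+\partial_3\hat m_{3,\theta}=0$, which after multiplication by $\sin\theta$ reads
\begin{equation*}
\partial_3(\sin\theta\,\hat m_{3,\theta})=-\partial_1(\sin\theta\,\hat m_{1,\theta}).
\end{equation*}
The explicit formula above, together with the lower bound $\cos\theta+\sqrt{\,\cdots\,}\geq\cos\theta>0$ on the denominator and the Sobolev bounds $\dot{H}^1\hookrightarrow L^p_{\loc}$ applied to $m^*_{2,\theta}$ and $\sin\theta\,\hat m_{3,\theta}$, shows that $\sin\theta\,\hat m_{1,\theta}=O(\sin\theta)\to 0$ in $L^p_{\loc}(\Omega)$ for every $p<\infty$; hence $\partial_1(\sin\theta\,\hat m_{1,\theta})\to 0$ in $\mathcal{D}'(\Omega)$. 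Taking the distributional limit of the displayed identity gives $\partial_3 g=0$, so $g=g(x_1)$; together with $g|_{x_3=\pm 1}=0$ this forces $g\equiv 0$ in $\Omega$, proving the third bullet. Once $g=0$, Rellich yields $(\sin\theta\,\hat m_{3,\theta})^2\to 0$ in $L^p_{\loc}$ and $m^*_{2,\theta}\to f$ in $L^p_{\loc}$ for every $p<\infty$; passing to the limit in the explicit formula for $\hat m_{1,\theta}$ then gives $\hat m_{1,\theta}\to(1-f^2)/2$ in $L^p_{\loc}$, which is the second bullet. Finally, averaging this convergence in $x_3$ and using $\dashint_{-1}^1\hat m_{1,\theta}\,dx_3=0$ produces $\dashint_{-1}^1 f^2\,dx_3=1$ in the limit, so $f\in \Xs$ and the first bullet is complete.
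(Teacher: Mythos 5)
Your plan follows the same architecture as the paper's own proof (extract a weak limit $f$ by concentration--compactness, use $\lvert m_\theta\rvert=1$ to identify the limit of $\hat m_{1,\theta}$, use the divergence constraint to kill the candidate limit of $m_{3,\theta}/\sin\theta$, then recover the constraint $\dashint f^2\,dx_3=1$), but there are two genuine gaps.

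First, in Stage 2/3 you invert the quadratic identity by ``picking the branch $m_{1,\theta}\geq 0$ (available for $\theta\ll 1$).'' That branch choice is \emph{not} globally available: the hypotheses give only $\bar m_{1,\theta}\equiv\cos\theta$ and an $H^1$-bound, which do not exclude $m_{1,\theta}<0$ on a nontrivial set, and $H^1$ functions in 2D are not continuous. The correct ingredient is a measure estimate: since $m_{1,\theta}-\cos\theta\in H^1(\Omega)\hookrightarrow L^p(\Omega)$ with norm $\lesssim\sin\theta$, Chebyshev gives $\lm^2(\{m_{1,\theta}\leq\rho\})\lesssim_{\rho,p}\sin^p\theta$; on that bad set one then uses the crude bound $\lvert\hat m_{1,\theta}\rvert\leq C/\sin^2\theta$ (taking $p$ large) so that its contribution to the $L^p_{\loc}$ limit vanishes. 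Without this, the denominator lower bound $\cos\theta+\sqrt{\cdots}\geq\cos\theta$ you rely on is simply false on the bad set, and the conclusion $\sin\theta\,\hat m_{1,\theta}\to 0$ in $L^p_{\loc}$ (needed for $g\equiv 0$) is unjustified.

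Second, in Stage 1 you assert that the weak limit $f$ ``keeps the boundary conditions $f(\pm\infty,\cdot)=\pm1$.'' Neither weak $\dot H^1$-convergence nor the concentration--compactness lemma gives this directly; what \cite[Lemma~1]{dioreducedmodel13} provides is only the one-sided sign condition $\limsup_{x_1\to-\infty}\bar f\leq 0$ and $\liminf_{x_1\to+\infty}\bar f\geq 0$. The full $L^2$-conditions \eqref{convent} must be derived afterwards: from $\dashint f^2\,dx_3\equiv 1$ (which you do obtain at the end) one first gets $\lvert f\rvert(\pm\infty,\cdot)=1$ via a Poincar\'e argument in $x_3$, and only then, combined with the sign conditions, does one conclude $f(\pm\infty,\cdot)=\pm1$. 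As written, your Stage 1 conclusion is used too early and is not earned; the admissibility $f\in\Xs$ requires this extra step even after $\dashint f^2\,dx_3=1$ is established.

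Aside from these two gaps the argument is sound and in fact coincides step by step with the paper's: your Stage 2 identity is exactly the one the paper uses; your derivation of $\partial_3 g=0$ from $\sin\theta\,\partial_1\hat m_{1,\theta}+\partial_3(\sin\theta\,\hat m_{3,\theta})=0$ mirrors the paper's Step~3 (including the use of the boundary trace); and the recovery of $\dashint f^2\,dx_3=1$ from $\dashint\hat m_{1,\theta}\,dx_3=0$ is the paper's Step~4.
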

Here, the main issue consists in proving that there exists a (weak) limit configuration $f\in \Xs$ of the sequence $\{m_{2,\theta}^*\}_\theta$. While a change of sign in the second component can be established easily by applying the concentration-compactness result \cite[Lemma~1]{dioreducedmodel13}, one has to take into account also the convergence of the two remaining components $m_\theta'$ in order to obtain the correct numerical value $1$ in $\dashint_{-1}^1 f^2 \, dx_3=1$ and $f(\pm\infty,\cdot)=\pm 1$. In particular, this part deals with controlling $\hat{m}_{1,\theta}$ in $L^2_\text{loc}$ (see Section~\ref{sec:outlineprooflb} for the relevance of this).

In the next step, Lemma~\ref{lem:compsecondleadingm3}, we identify the (weak) limit $f$ as one of the minimizers $m_2^*$ of the variational problem defining $E_0$, provided the second-order upper bound \eqref{eq:secondorderupperbound} holds. Moreover, we improve the convergence of $m_\theta$ and derive the bounds on $\partial_1 \hat{m}_{1,\theta}$ in $L^1(\Omega,\mu dx)$ (again, cf. Section~\ref{sec:outlineprooflb}).

\medskip
\begin{lem}\label{lem:compsecondleadingm3}
For $0<\theta \ll 1$, let $m_\theta \in X_0\cap X^\theta$ be admissible in the definition of $E_\text{asym}(\theta)$ and satisfy the more restrictive bound
\begin{align}\label{eq:apriorisecondleading}
  \int_{\Omega} \lvert \nabla m_\theta \rvert^2 dx \leq E_0 \sin^2\theta + C \sin^4\theta
\end{align}
for some fixed positive constant $C>0$, and $E_0=4\pi$ as in \eqref{eq:defe0}. Then, the limit configuration $f\in \Xs$ in Lemma~\ref{lem:compleadingm1} is one of the minimizers $m_2^*$ of the variational problem defining $E_0$; moreover, adopting the notation in Lemma~\ref{lem:compleadingm1}, up to translations in $x_1$ and for a subsequence in $\theta$, we have in the limit $\theta\tod 0$:
\begin{itemize}
\item $m^*_{2,\theta} \to m^*_2$ in $\dot{H}^1(\Omega)$,
\item $\{\partial_1 \hat{m}_{1,\theta}\}_\theta$ is bounded in $L^1(\Omega,\mu dx)$,
\item $\sin\theta \,\hat{m}'_\theta \to 0$ in $H^1(\Omega)$.
\end{itemize}
\end{lem}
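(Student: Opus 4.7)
My plan is to exploit the quantitative upper bound \eqref{eq:apriorisecondleading} in two stages: first to pin down the weak limit supplied by Lemma~\ref{lem:compleadingm1} as one of the two explicit minimizers in \eqref{eq:defm2star}, and then to squeeze from the same bound the improved compactness for both $m^*_{2,\theta}$ and $\hat m'_\theta$. The starting point is the pointwise identity
\[ \lvert \nabla m_\theta \rvert^2 = \sin^2\theta\, \lvert \nabla m^*_{2,\theta} \rvert^2 + \sin^4\theta\, \lvert \nabla \hat m'_\theta \rvert^2, \]
which is immediate from the parametrization \eqref{eq:defrescm}. Integrating and inserting \eqref{eq:apriorisecondleading} yields the two quantitative estimates
\[ \int_\Omega \lvert \nabla m^*_{2,\theta} \rvert^2 \, dx \le E_0 + C \sin^2\theta \quad \text{and} \quad \sin^2\theta \int_\Omega \lvert \nabla \hat m'_\theta \rvert^2 \, dx \le \Bigl(E_0 - \int_\Omega \lvert \nabla m^*_{2,\theta} \rvert^2 \, dx\Bigr) + C \sin^2\theta, \]
which will drive all three conclusions. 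Combining the first one with weak lower semicontinuity along the $\dot H^1$-weakly convergent subsequence $m^*_{2,\theta} \wto f \in \Xs$ from Lemma~\ref{lem:compleadingm1} gives $\int_\Omega \lvert \nabla f \rvert^2 \, dx \le E_0$. Since $f$ is admissible in \eqref{eq:defe0}, it must be a minimizer, and Proposition~\ref{prop:minofe0} identifies $f = m_2^*$ for some $\sigma \in \{\pm 1\}$, which I fix from now on.

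With $f$ identified as a minimizer, the matching bounds $\liminf \int \lvert \nabla m^*_{2,\theta} \rvert^2 \, dx \ge E_0 = \int \lvert \nabla m_2^* \rvert^2 \, dx$ (from weak lower semicontinuity and minimality) and $\limsup \le E_0$ (from the first quantitative estimate) force convergence of the $\dot H^1$-norms; weak convergence plus convergence of norms in the Hilbert space $\dot H^1$ then upgrades to the strong $\dot H^1$-convergence $m^*_{2,\theta} \to m_2^*$. Plugging this back into the second quantitative estimate gives $\sin^2\theta \int_\Omega \lvert \nabla \hat m'_\theta \rvert^2 \, dx \to 0$, so that $\nabla(\sin\theta\, \hat m'_\theta) \to 0$ in $L^2(\Omega)$. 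Because $\bar{\hat m}_{1,\theta} \equiv 0$ (a direct consequence of $\bar m_{1,\theta} \equiv \cos\theta$) and $\hat m_{3,\theta}\big\vert_{\partial\Omega} = 0$, the Poincar\'e-type inequalities \eqref{eq:poincm1}--\eqref{eq:poincm3} in the $x_3$-direction transfer the gradient decay into $L^2$-decay and yield the full $H^1$-convergence $\sin\theta\, \hat m'_\theta \to 0$.

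For the remaining uniform bound on $\partial_1 \hat m_{1,\theta}$ in $L^1(\Omega, \mu \, dx)$, I unfold $\lvert m_\theta \rvert^2 = 1$ in the parametrization \eqref{eq:defrescm} to obtain the exact algebraic identity
\[ 2 \cos\theta \, \hat m_{1,\theta} = 1 - (m^*_{2,\theta})^2 - \sin^2\theta\, \bigl( \hat m_{1,\theta}^2 + \hat m_{3,\theta}^2 \bigr). \]
Differentiating in $x_1$ and absorbing the $\partial_1 \hat m_{1,\theta}$ that appears on the right-hand side reduces the task to controlling $\int_\Omega \mu\, \lvert m^*_{2,\theta} \partial_1 m^*_{2,\theta} \rvert \, dx$ plus a lower-order term of the form $\sin^2\theta \int_\Omega \mu \, \lvert \hat m_{3,\theta} \partial_1 \hat m_{3,\theta} \rvert \, dx$. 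Cauchy--Schwarz splits the leading term into $\int_\Omega \mu\, \lvert \partial_1 m^*_{2,\theta} \rvert^2 \, dx$, which is harmless since $\mu \in L^\infty$ and $\{m^*_{2,\theta}\}_\theta$ is bounded in $\dot H^1$, and the weighted mass $\int_\Omega \mu\, (m^*_{2,\theta})^2 \, dx$. The main technical obstacle is the uniform control of this last quantity; my plan is to split $m^*_{2,\theta} = \ell + (m^*_{2,\theta} - \ell)$, noting that $\int_\Omega \mu\,\ell^2 \, dx < \infty$ thanks to the exponential tails of $\mu$ and the bound $\lvert \ell \rvert \le 1$, while the remainder $m^*_{2,\theta} - \ell$ is uniformly bounded in $\dot H^1$ and has $\overline{(m^*_{2,\theta} - \ell)}(0) = 0$, so that Hardy's inequality \eqref{eq:hardy} closes the estimate. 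The $\hat m_{3,\theta}$-contribution is controlled analogously, using that $\sin\theta\, \hat m'_\theta$ is bounded in $H^1$ by the previous paragraph.
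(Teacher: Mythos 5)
Your treatment of the first two bullet points (identifying $f$ as a minimizer $m_2^*$, strong $\dot H^1$-convergence of $m^*_{2,\theta}$, and $H^1$-convergence of $\sin\theta\,\hat m'_\theta$ to zero via the identity $\lvert\nabla m_\theta\rvert^2 = \sin^2\theta\,\lvert\nabla m^*_{2,\theta}\rvert^2 + \sin^4\theta\,\lvert\nabla\hat m'_\theta\rvert^2$ and the Poincar\'e inequalities) is essentially identical to Steps~1 and~2 of the paper and is correct.

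For the $L^1(\Omega,\mu\,dx)$-bound on $\partial_1\hat m_{1,\theta}$, you pursue a genuinely different algebraic route: instead of the paper's explicit formula $m_{1,\theta}=\sqrt{1-m_{2,\theta}^2-m_{3,\theta}^2}$ (valid only where $m_{1,\theta}>0$), you differentiate the universally valid identity $2\cos\theta\,\hat m_{1,\theta} = 1-(m^*_{2,\theta})^2 - \sin^2\theta(\hat m_{1,\theta}^2+\hat m_{3,\theta}^2)$. That starting point is sound, but the step ``absorbing the $\partial_1\hat m_{1,\theta}$ that appears on the right-hand side'' is a genuine gap. Differentiation produces the term $2\sin^2\theta\,\hat m_{1,\theta}\partial_1\hat m_{1,\theta}$; moving it to the left combines with $2\cos\theta\,\partial_1\hat m_{1,\theta}$ to give $2m_{1,\theta}\,\partial_1\hat m_{1,\theta}$, and $m_{1,\theta}$ is \emph{not} bounded away from zero --- it can vanish or go negative --- so you cannot divide to recover $\partial_1\hat m_{1,\theta}$ from the controlled right-hand side. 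This is precisely the obstruction the paper circumvents by splitting $\Omega$ into $M_{\frac12,\theta}=\{m_{1,\theta}\le\frac12\}$, which has $\mathcal L^2(M_{\frac12,\theta})\lesssim\sin^2\theta$ by \eqref{eq:smallmeasure} and where the $H^1$-bound on $\sin\theta\,\hat m_{1,\theta}$ suffices via Cauchy--Schwarz, and its complement, where $m_{1,\theta}\ge\frac12$ makes the division harmless. Your argument would go through cleanly if, instead of absorbing, you kept the coefficient $\cos\theta$ on the left and controlled the $\hat m_{1,\theta}$-contribution exactly as you propose to control the $\hat m_{3,\theta}$-contribution, namely by
\begin{align*}
\int_\Omega \lvert\sin\theta\,\hat m_{1,\theta}\rvert\,\lvert\sin\theta\,\partial_1\hat m_{1,\theta}\rvert\,\mu\,dx \;\le\; \lVert\mu\rVert_\infty\,\lVert\sin\theta\,\hat m_{1,\theta}\rVert_{L^2}\,\lVert\sin\theta\,\nabla\hat m_{1,\theta}\rVert_{L^2}\;\le\;C,
\end{align*}
using the $H^1$-boundedness of $\sin\theta\,\hat m'_\theta$ established in your previous step. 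As written, however, your list of remaining terms omits the $\hat m_{1,\theta}$-contribution and the absorption is unjustified; this needs to be repaired. The rest of Step~3 --- the split $m^*_{2,\theta}=\ell+(m^*_{2,\theta}-\ell)$ combined with Hardy's inequality \eqref{eq:hardy} and the exponential decay of $\mu$ to bound $\int_\Omega\mu\,(m^*_{2,\theta})^2\,dx$ --- is correct and in fact a slightly cleaner way to get the weighted $L^2$-bound than the paper's invocation of \eqref{eq:poincm3}~\&~\eqref{eq:hardy}.
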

Note that under the more restrictive assumption \eqref{eq:apriorisecondleading} (which, in particular, is satisfied by minimizers $m_\theta$ of $E_\text{asym}(\theta)$, cf. Proposition \ref{prop:upperbd}), not only the weak limit $f$ can be identified, but also the convergence of the second component becomes strong in the $\dot{H}^1(\Omega)$-topology. 

This information then suffices to prove the asymptotic spectral gap inequality for the approximate tangent vectors $\hat{m}_{2,\theta}$:

\medskip
\begin{lem}\label{lem:hessest}
Let $m_\theta\in X_0\cap X^\theta$ satisfy the assumptions of Lemma~\ref{lem:compsecondleadingm3}. Define $\hat{m}_\theta$ as in \eqref{eq:defhatmtheta}, for $m_2^*$ given by Lemma~\ref{lem:compsecondleadingm3}.

Then, there exists a constant $\varepsilon>0$ such that in the limit $\theta\tod 0$ we have
\begin{align}\label{eq:hessest}
B(\hat{m}_{2,\theta},\hat{m}_{2,\theta}) \geq \varepsilon \int_{\Omega} \lvert \nabla \hat{m}_{2,\theta} \rvert^2 + \hat{m}_{2,\theta}^2 \, \mu \, dx - \so(1)  \int_\Omega \lvert \nabla \hat{m}'_\theta\rvert^2 dx,
\end{align}
where the Hessian $B$ has been defined in \eqref{def:B}.
\end{lem}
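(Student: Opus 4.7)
The plan is to apply Proposition~\ref{prop:spectralgap} to the orthogonal projection of $\hat{m}_{2,\theta}$ onto the admissible subspace \eqref{eq:constraintsonf}, and to use the $\sin\theta$-smallness encoded in \eqref{eq:avorth} to dispose of the perturbation. Set
\[ c_\theta(x_1) := \dashint_{-1}^1 m_2^*\,\hat{m}_{2,\theta}\,dx_3, \qquad g_\theta := \hat{m}_{2,\theta} - c_\theta\,m_2^*, \]
so that $\int_{-1}^1 m_2^*\,g_\theta\,dx_3 \equiv 0$ holds pointwise in $x_1$. Since $\bar{m}_2^*(0)=\ell(0)=0$, a higher-order adjustment of the $x_1$-translation fixed in Lemma~\ref{lem:compsecondleadingm3} enforces $\overline{\hat{m}_{2,\theta}}(0)=0$ and hence $\bar{g}_\theta(0)=0$, so that both constraints of Proposition~\ref{prop:spectralgap} hold for $g_\theta$. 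Crucially, identity \eqref{eq:avorth} together with $\dashint(m_2^*)^2=1$ gives
\[ c_\theta(x_1) = -\tfrac{\sin\theta}{2}\dashint_{-1}^1\lvert\hat{m}_\theta\rvert^2\,dx_3, \]
and the spectral gap yields $B(g_\theta,g_\theta)\geq \tfrac{1}{5}\int_\Omega\bigl(\lvert\nabla g_\theta\rvert^2+\mu g_\theta^2\bigr)\,dx$.

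Expanding $B(\hat{m}_{2,\theta},\hat{m}_{2,\theta}) = B(g_\theta,g_\theta)+2B(g_\theta,c_\theta m_2^*)+B(c_\theta m_2^*,c_\theta m_2^*)$ and applying the Euler-Lagrange equation \eqref{eq:ele0} with test function $c_\theta g_\theta$ (whose $x_3$-average vanishes at $x_1=0$ because $\bar g_\theta(0)=0$) collapses the cross term to
\[ B(g_\theta,c_\theta m_2^*) = \int_\Omega c'_\theta\bigl(m_2^*\,\partial_1 g_\theta - g_\theta\,\partial_1 m_2^*\bigr)\,dx. \]
Using $\lvert m_2^*\rvert\leq C$ and $\lvert\nabla m_2^*\rvert^2\leq C\mu$ (cf.\ \eqref{eq:p1m2}--\eqref{eq:p3m2}), Cauchy-Schwarz bounds both the cross and diagonal correction terms by a constant times $\|c'_\theta\|_{L^2(\R)}^2+\int_\R\mu c_\theta^2\,dx_1$, up to a fraction of $\int_\Omega(\lvert\nabla g_\theta\rvert^2+\mu g_\theta^2)\,dx$ absorbable into $B(g_\theta,g_\theta)$.

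The main obstacle is to prove that both defect integrals $\|c'_\theta\|_{L^2(\R)}^2$ and $\int_\R\mu c_\theta^2\,dx_1$ are of order $\so(1)\bigl(\int_\Omega\lvert\nabla\hat{m}'_\theta\rvert^2\,dx + \int_\Omega(\lvert\nabla\hat{m}_{2,\theta}\rvert^2+\mu\hat{m}_{2,\theta}^2)\,dx\bigr)$, so that they may be absorbed respectively into the $-\so(1)\int\lvert\nabla\hat{m}'_\theta\rvert^2$ tolerance on the right of \eqref{eq:hessest} and into $\varepsilon\int_\Omega(\lvert\nabla\hat{m}_{2,\theta}\rvert^2+\mu\hat{m}_{2,\theta}^2)\,dx$ on the left via Young's inequality. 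The explicit formula for $c_\theta$ above and its derivative $c'_\theta=-\sin\theta\dashint\hat{m}_\theta\cdot\partial_1\hat{m}_\theta\,dx_3$, combined with Cauchy-Schwarz in $x_3$, reduce this to establishing $\sin^2\theta\,\lVert\hat{m}_\theta\rVert_{L^\infty_{x_1}L^2_{x_3}}^2=\so(1)$. This smallness follows from the a priori bounds $\sin\theta\,\hat{m}'_\theta\to 0$ in $H^1(\Omega)$ (from Lemma~\ref{lem:compsecondleadingm3}) and $\sin\theta\,\hat{m}_{2,\theta}\to 0$ in $\dot H^1(\Omega)$ (a consequence of $m^*_{2,\theta}\to m^*_2$ in $\dot H^1$ together with $\hat m_{2,\theta}=(m^*_{2,\theta}-m^*_2)/\sin\theta$), combined with the one-dimensional Sobolev embedding $H^1((-1,1))\hookrightarrow L^\infty$ applied pointwise in $x_1$ (recall $\Omega$ is a strip of fixed width). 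Substituting back and using $\lvert\nabla\hat{m}_{2,\theta}\rvert^2\leq 2\lvert\nabla g_\theta\rvert^2+2\lvert\nabla(c_\theta m_2^*)\rvert^2$ (and the analogous pointwise bound for $\mu\hat{m}_{2,\theta}^2$) concludes \eqref{eq:hessest} for some $\varepsilon>0$.
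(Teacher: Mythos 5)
Your overall strategy coincides with the paper's: the decomposition $\hat{m}_{2,\theta}=g_\theta+c_\theta m_2^*$ (the paper uses $\psi$ and $g$ for your $g_\theta$ and $c_\theta$), the application of the spectral-gap inequality of Proposition~\ref{prop:spectralgap} to $g_\theta$, the reduction of the cross term via the Euler--Lagrange equation, and the diagonal bound $B(c_\theta m_2^*,c_\theta m_2^*)\geq -\int c_\theta^2\,\mu\,dx$. (Minor point: no ``higher-order adjustment of the translation'' is needed; $\overline{\hat{m}_{2,\theta}}(0)=0$ already follows from $\bar{m}_{2,\theta}(0)=0$ and $\bar{m}_2^*(0)=\ell(0)=0$ by the definition \eqref{eq:defhatmtheta}.)

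However, there is a genuine gap in your treatment of the defect terms. After rewriting $B(g_\theta,c_\theta m_2^*)=\int_\Omega c'_\theta\bigl(m_2^*\partial_1 g_\theta-g_\theta\partial_1 m_2^*\bigr)\,dx$ you stop there and estimate, which forces you to control $\lVert c'_\theta\rVert_{L^2(\R)}^2$. You propose to do this via the bound $\sin^2\theta\,\lVert\hat{m}_\theta\rVert_{L^\infty_{x_1}L^2_{x_3}}^2=\so(1)$. That estimate is fine for $\hat{m}'_\theta$, because Lemma~\ref{lem:compsecondleadingm3} gives $\sin\theta\,\hat{m}'_\theta\to 0$ in $H^1(\Omega)$ and Poincar\'e yields genuine $L^2(\Omega)$-control, so $\sup_{x_1}\int_{-1}^1\lvert\sin\theta\,\hat{m}'_\theta\rvert^2dx_3\to 0$. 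But for the second component the only input you have is $\sin\theta\,\hat{m}_{2,\theta}=m^*_{2,\theta}-m_2^*\to 0$ in $\dot{H}^1(\Omega)$, together with $\overline{\hat{m}_{2,\theta}}(0)=0$. Hardy's inequality \eqref{eq:hardy} then controls $m^*_{2,\theta}-m_2^*$ only in the \emph{weighted} space $L^2(\Omega,(1+x_1^2)^{-1}dx)$; it does not give uniform control of $\int_{-1}^1\lvert m^*_{2,\theta}-m_2^*\rvert^2dx_3$ as $\lvert x_1\rvert\to\infty$, and the pointwise bound $\lvert m^*_{2,\theta}\rvert\leq 1/\sin\theta$ is far too weak. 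Your invocation of the one-dimensional embedding $H^1((-1,1))\hookrightarrow L^\infty$ acts in the wrong variable and does not produce the $L^\infty_{x_1}$ statement you need. So the claimed bound $\lVert c'_\theta\rVert_{L^2}^2 = \so(1)\bigl(\int\lvert\nabla\hat{m}'_\theta\rvert^2 + \int(\lvert\nabla\hat{m}_{2,\theta}\rvert^2+\mu\hat{m}_{2,\theta}^2)\bigr)$ is not established for the $\hat{m}_{2,\theta}$-contribution.

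The paper sidesteps $\lVert c'_\theta\rVert_{L^2}$ entirely. Using $\int_{-1}^1(m_2^*\partial_1\psi+\psi\partial_1 m_2^*)dx_3\equiv 0$, it rewrites the cross term as $-2\int_\R \tfrac{d}{dx_1}g\int_{-1}^1\psi\,\partial_1 m_2^*\,dx_3\,dx_1$ and then integrates by parts once more, transferring the $x_1$-derivative off $g=c_\theta$ onto $\psi\partial_1 m_2^*$. After that step the only quantity that must be small is $\int_\R g^2\,\mu^{1/2}\,dx_1$ (note the weight), which the paper controls via the identity \eqref{eq:avorth} and a Sobolev embedding $H^1\hookrightarrow L^4$ applied to $\hat{m}_\theta\,\mu^{1/8}$ --- the exponentially decaying weight $\mu^{1/8}$ localizes in $x_1$, so Hardy's inequality is enough even for the $\hat{m}_{2,\theta}$-component. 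Your argument would likely work if you performed the same integration by parts and bounded $\int\mu^{1/2}c_\theta^2$ rather than $\lVert c'_\theta\rVert_{L^2}^2$; as written, the $\hat{m}_{2,\theta}$-part of $\lVert c'_\theta\rVert_{L^2}^2$ is not controlled.
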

Combining all steps, we finally obtain Proposition~\ref{prop:compactnessasym}.

We will now prove Lemmas~\ref{lem:compleadingm1}~and~\ref{lem:compsecondleadingm3}.
\begin{proof}[Proof of Lemma~\ref{lem:compleadingm1}]
Define $m^*_{3,\theta} := \sin\theta \, \hat{m}_{3,\theta} = \tfrac{m_{3,\theta}}{\sin\theta}$.

\textbf{Step 1:} \textit{There exist $f\in \dot{H}^1(\Omega)$ and $m_3^* \in {H}^1(\Omega)$ such that $\bar{f}(0)=0$ and 
\begin{align}\label{eq:signf}
  \limsup_{x_1\to-\infty} \bar{f}(x_1) \leq 0 \quad \text{as well as} \quad \liminf_{x_1\to\infty} \bar{f}(x_1) \geq 0.
\end{align}
Moreover, up to a subsequence and translations in $x_1$-direction, as $\theta \tod 0$:
\begin{gather}\label{eq:convm2m3}
\begin{aligned}
  m^*_{2,\theta} &\wto f \quad\quad \text{in } \dot{H}^1(\Omega),\\
  \bar{m}^*_{2,\theta} &\to \bar{f} \qquad\text{locally uniformly,}\\
  m^*_{3,\theta} &\wto m_3^* \quad\; \text{in } H^1(\Omega).
\end{aligned}
\end{gather}
}

Indeed, by \eqref{eq:apriorileading}, the families $\{m^*_{2,\theta}\}_\theta$ and $\{m^*_{3,\theta}\}_\theta$ are bounded in $\dot{H}^1(\Omega)$. 
Recall that for $m_\theta\in X_0\cap\Xt$ we have $\bar{m}_{1,\theta} = \cos\theta$ (cf. \eqref{deftheta}). Moreover, $m_{3,\theta}=0$ on $\partial\Omega$.
Thus, the Poincar\'e inequalities \eqref{eq:poincm1} and \eqref{eq:poincm3} imply
\[ \int_\Omega \lvert m_{1,\theta}-\cos\theta \rvert^2 + m_{3,\theta}^2 \, dx \leq C \int_\Omega \lvert \partial_3 m_\theta'\rvert^2 dx. \]
Similar estimates hold for the rescaled variants $\hat{m}_{1,\theta}$ and $\hat{m}_{3,\theta}$ given in \eqref{eq:defhatm}. For the second component $m_{2,\theta}$, due to $\bar{m}_{2,\theta}(0)=0$, Hardy's inequality \eqref{eq:hardy} leads to a control of the $L^2$-norm of $m_{2, \theta}$ on bounded subsets $\Omega_k=(-k,k)\times (-1, 1) \subset \Omega$ for every $k>0$: 
\begin{align*}
 \int_{\Omega_k} m_{2, \theta}^2 \,  dx=\int_{\Omega_k}\bigl|m_{2, \theta}- \bar{m}_{2,\theta}(0)\bigr|^2   dx \stackrel{\eqref{eq:hardy}}{\leq} C_k \int_{\Omega_k} \lvert \nabla m_{2, \theta} \rvert^2 dx.
\end{align*}
Therefore, we deduce that $\{m^*_{3,\theta}\}_\theta$ is bounded in $H^1(\Omega)$ while $\{m^*_{2,\theta}\}_\theta$ is bounded in $\dot{H}^1(\Omega)\cap H_\loc^1(\Omega)$. 

Now we will use a concentration-compactness result to ensure that $x_1$-translated configurations $\{m^*_{2,\theta}\}_\theta$ do satisfy the constraint \eqref{eq:signf} in the limit $\theta\downarrow 0$.
For that, let
\[ u_\theta=\bar{m}_{2,\theta}^*=\tfrac{\bar{m}_{2,\theta}}{\sin \theta}. \]
Since $m_{2,\theta}(\pm\infty, \cdot)=\pm \sin \theta$ and $\theta\in (0, \pi)$, we have that $u_\theta\mp 1\in H^1(\R_{\pm})$:
Indeed
\[ \int_{\R_+}|u_\theta-1|^2\, dx_1=\tfrac{1}{\sin^2 \theta} \int_{\R_+}|\bar{m}_{2,\theta}-\sin \theta|^2\, dx_1\leq \tfrac{1}{2\sin^2 \theta} \int_{\Omega_+} |{m}_{2,\theta}-\sin \theta|^2\, dx \stackrel{\eqref{convent}}{<}\infty \]
(similarly on $\R_-$).
Moreover, $\{u_\theta\}_\theta$ is uniformly bounded in $\dot{H}^1(\R)$ and satisfies 
\[ \limsup_{x_1\to -\infty} u_\theta(x_1)=-1<0 \text{ and } \liminf_{x_1\to\infty} u_\theta(x_1)=1 > 0 \, \, \text{for every } \theta\downarrow 0. \]
By Lemma~1 in \cite{dioreducedmodel13}, up to a subfamily in $\theta$, we obtain an admissible limit $u\in \dot{H}^1(\R)$ and zeros $x_{1,\theta}$ of $\bar{m}_{2,\theta}$ for $\theta\downarrow 0$ such that 
\[ \text{$\bar{m}_{2,\theta}^*(\cdot+x_{1,\theta})\to u$ locally uniformly in $\R$ and $u$ satisfies \eqref{eq:signf}}. \]
In particular, $u(0)=0$. 
Since $\{m^*_{3,\theta}(\cdot+x_{1,\theta}, \cdot)\}_\theta$ is bounded in $H^1(\Omega)$ and $\{m^*_{2,\theta}(\cdot+x_{1,\theta}, \cdot)\}_\theta$ is bounded in $\dot{H}^1(\Omega)\cap H_\loc^1(\Omega)$, there exist $f\in \dot{H}^1(\Omega, \R)$ and $\mts \in {H}^1(\Omega, \R)$ such that  
\begin{align*}
  m^*_{2,\theta}(\cdot+x_{1,\theta}, \cdot)&\wto f \quad \text{ in } \dot{H}^1(\Omega)\cap H_\loc^1(\Omega) \text{ as }\theta\tod 0,\\
  m^*_{3,\theta}(\cdot+x_{1,\theta}, \cdot) &\wto \mts \quad \text{ in } H^1(\Omega) \text{ as }\theta \tod 0.
\end{align*}
By Rellich's theorem, we know that $m^*_{2,\theta}(\cdot+x_{1,\theta}, \cdot) \to f$ strongly in $L^2_\loc(\Omega)$, which implies in particular that we can identify the $x_3$-average of the limit $\bar{f}=u$.

In order to simplify notation, we may w.l.o.g. assume that these hold without translation in $x_1$-direction.

\medskip
Before we can show that $f$ is admissible in $\Xs$ (see Steps 4 and 5) and $m_3^* = 0$ (see Step 3), we need to identify the limit of $\{\hat{m}_{1,\theta}\}_\theta$ as $\theta \tod 0$.

\textbf{Step 2:} \textit{For every $1\leq p < \infty$ we have $\hat{m}_{1,\theta} \to \tfrac{1-{f}^2-{(m_3^*)}^2}{2}$ in $L^p_\text{loc}(\Omega)$.}

Let $\Omega_k := [-k,k]\times[-1,1]$. By Rellich's embedding theorem, Step 1 yields strong convergence $m^*_{2,\theta} \to f$ and $m^*_{3,\theta}\to m_3^*$  in $L^p(\Omega_k)$ as $\theta\tod 0$. Set
\begin{align*}
h_\theta = (m^*_{2,\theta})^2+(m^*_{3,\theta})^2.
\end{align*}
On one hand, one has that 
\begin{align*}
h_\theta\to f^2+(m_3^*)^2=:h \quad\text{in $L^p(\Omega_k)$ for every $p\geq 1$}.
\end{align*}
On the other hand, denoting
\begin{align*}
M_{\rho, \theta}:=\{m_{1,\theta}\leq \rho\} \quad\text{for some }\rho\in[-1,1]\text{ and }\theta>0,
\end{align*}
the function $\hat{m}_{1,\theta}$ can be expressed in terms of $h_\theta$ as follows:
\begin{align*}
\hat{m}_{1,\theta} = \tfrac{m_{1,\theta}-\cos\theta}{\sin^2\theta} = \tfrac{\sqrt{1-\sin^2\theta \, h_\theta}\, \, -1}{\sin^2\theta} + \tfrac{1-\cos\theta}{\sin^2\theta}=:F_\theta(h_\theta) \quad \text{on } \Omega\setminus M_{0, \theta}.
\end{align*}
It is easy to check that $F_\theta(t)\to F(t)$ as $\theta\tod 0$ (assuming $\sin^2\theta \leq t^{-1}$ to make $F_\theta(t)$ well-defined) with $F(t)=\tfrac{1-t}{2}$.

In fact, one computes
\begin{align*}
\lvert F_\theta(t)-F(t) \rvert \leq \bigl\lvert \tfrac{1}{2}-\tfrac{1-\cos\theta}{\sin^2\theta} \bigr\rvert + \lvert t \rvert \bigl\lvert \tfrac{1}{2}-\tfrac{1}{1+\sqrt{1-t\sin^2\theta}} \bigr\rvert \leq C\sin^2\theta \, (1+t^2)
\end{align*}
as $\theta\tod 0$. Therefore, 
\begin{align*}
  \MoveEqLeft\int_{\Omega_k \setminus M_{0, \theta}} \Bigl\lvert \hat{m}_{1,\theta} - \tfrac{1-(m^*_{2,\theta})^2-(m^*_{3,\theta})^2}{2} \Bigr\rvert^p dx = \int_{\Omega_k\setminus M_{0, \theta}} \lvert F_\theta(h_\theta)-F(h_\theta) \rvert^p\, dx\\
  &\leq C^p\sin^{2p}\theta \int_{\Omega_k} (1+h_\theta^2)^p\, dx\to 0 \quad \text{as }\theta\tod 0.
\end{align*}
For the estimate on $M_{0, \theta}$, we will prove more generally that for arbitrary $p\geq 1$, $0\leq \rho < 1$ and $\theta\in(0,\tfrac{\pi}{2})$ such that $\tfrac{1+\rho}{2} < \cos\theta < 1$
\begin{align}\label{eq:smallmeasure}
  \lm^2(M_{\rho, \theta}) \leq C_{\rho,p} \sin^p\theta,
\end{align}
where $C_{\rho,p}>0$ is a constant depending only on $\rho$ and $p$.

Indeed, if $\frac{1+\rho}{2}<\cos\theta < 1$, we have
\begin{align}\label{eq:estonM}
1 \leq \tfrac{m_{1,\theta} - \cos\theta}{\rho-\cos\theta} \quad\text{on } M_{\rho,\theta}.
\end{align}
By Poincar\'e's inequality we know that $m_{1,\theta} -\cos \theta\in H^1(\Omega)$, so that for $p\geq 2$, Sobolev's embedding theorem $H^1(\Omega)\subset L^p(\Omega)$ yields
\begin{align*}
\bigl( \lm^2(M_{\rho,\theta}) \bigr)^\frac{1}{p} &= \Bigl(\int_{M_{\rho,\theta}} 1 \, dx \Bigr)^\frac{1}{p} \stackrel{\eqref{eq:estonM}}{\leq} \Bigl( \int_{M_{\rho,\theta}} \bigl\lvert \tfrac{m_{1,\theta}  - \cos\theta}{\rho - \cos\theta} \bigr\rvert^p dx \Bigr)^\frac{1}{p}\\
&\leq \tfrac{C_{p}}{\cos\theta-\rho} \Bigl( \int_{\Omega} \lvert \nabla m_{1,\theta}  \rvert^2 dx \Bigr)^\frac{1}{2} \leq C_{\rho,p} \sin\theta,
\end{align*}
as long as $\frac{1+\rho}{2}<\cos\theta<1$, e.g. for $\theta\in(0,\tfrac{\pi}{3})$, if $\rho=0$. Since we have now proven that \eqref{eq:smallmeasure} holds for $p\geq 2$, it immediately follows that \eqref{eq:smallmeasure} holds also for $p\in [1, 2)$ (because $\sin^2\theta\leq \sin^p\theta$ for every $\theta\in(0,\tfrac{\pi}{2})$). 

Therefore, applying \eqref{eq:smallmeasure} for $2p+1$ instead of $p$, we find, using $\lvert \hat{m}_{1,\theta} \rvert, \lvert h_\theta\rvert\leq \tfrac{C}{\sin^2 \theta}$:
\begin{align*}
\int_{\Omega_k \cap M_{0, \theta}} \bigl\lvert \hat{m}_{1,\theta} - \tfrac{1-h_\theta}{2} \bigr\rvert^p dx\leq \tfrac{C_p}{\sin^{2p} \theta} \lm^2(M_{0,\theta}) \stackrel{\eqref{eq:smallmeasure}}{\to} 0 \quad \text{as }\theta\tod 0.
\end{align*}
So far we have obtained $\int_{\Omega_k} \lvert F_\theta(h_\theta) - F(h_\theta) \rvert^p dx \to 0$ as $\theta \tod 0$. Due to
\begin{align*}
\int_{\Omega_k} \lvert F(h_\theta)-F(h) \rvert^p\, dx\leq \tfrac{1}{2^p} \int_{\Omega_k} \lvert h_\theta-h \rvert^p\, dx\to 0,
\end{align*}
we conclude $\int_{\Omega_k} \lvert F_\theta(h_\theta) - F(h) \rvert^p dx \to 0$ in the limit $\theta\tod 0$.

\textbf{Step 3:} \textit{We have $m_3^*=0$. In particular, by Step 2, this yields $\hat{m}_{1,\theta} \to \tfrac{1-{f}^2}{2}$ in $L^p_\text{loc}(\Omega)$ for every $1\leq p < \infty$.}

Indeed, we will use $\nabla\cdot m_\theta' =0$ in $\Omega$ in order to show that $m^*_{3,\theta}$ vanishes in the limit $\theta\tod 0$. For that, observe that in terms of $\hat{m}_{1,\theta}$ and $m^*_{3,\theta}$, the divergence constraint turns into
\begin{gather}\label{eq:mag_asym_lvl1}
\left\{
\begin{aligned}
\sin\theta \, \partial_1  \hat{m}_{1,\theta} + \partial_3  m^*_{3,\theta} &= 0 \quad \text{in }\Omega,\\
m_{3,\theta}^* &= 0 \quad\text{on }\partial\Omega.
\end{aligned}\right.
\end{gather}
By Step 1 we have $m^*_{3,\theta} \wto m_3^*$ weakly in $H^1(\Omega)$, hence weakly in $H^\frac{1}{2}(\partial \Omega)$ by weak continuity of the trace operator. Therefore, $m_3^* = 0$ on $\partial\Omega$. After testing \eqref{eq:mag_asym_lvl1} with functions in $C^\infty_0(\Omega)$ we may therefore pass to the limit $\theta\tod 0$ and use Steps 1 and 2 to find 
\begin{align*}
\partial_3  m_3^* = 0 \text{ in }\mathcal{D}'(\Omega).
\end{align*}
Since $m_3^* \in H^1(\Omega)$, we conclude $m_3^*=0$ in $\Omega$.

\textbf{Step 4:} \textit{We have $\dashint_{-1}^1 \! (m^*_{2,\theta})^2 \, dx_3 \to 1$ a.e. in $x_1$. As a consequence, $\dashint_{-1}^1{f}^2 \, dx_3 \equiv 1$.}

Indeed, by the compact embedding $H_\text{loc}^1(\Omega) \to L^2(\{x_1\}\times[-1,1])$ for any fixed $x_1\in \R$, Step~1 implies that $\dashint_{-1}^1 (m^*_{2,\theta})^2 \, dx_3 \to \dashint_{-1}^1 f^2 \, dx_3$ a.e. in $x_1$. But Step~3 yields $0=\int_{-1}^1 \hat{m}_{1,\theta} \, dx_3 \to \dashint_{-1}^1 \bigl( 1 - {f}^2 \bigr) \, dx_3$ in $L^1_\text{loc}(\R)$ and thus a.e. in $x_1$. Since the function $x_1\mapsto \dashint_{-1}^1 {f}^2 \, dx_3$ is of class $W^{1,1}_\text{loc}(\R)$ and therefore continuous, we conclude $\dashint_{-1}^1 f^2 \, dx_3 = 1$ for all $x_1 \in \R$.

\textbf{Step 5:} \textit{We have $f(\pm\infty,\cdot)=\pm1$, which concludes the proof of $f\in \Xs$.}

We start by checking that $\lvert f \rvert(\pm\infty,\cdot)=1$. Indeed, since $\lvert f \rvert\in \dot{H}^1(\Omega)$, the function $x_3 \mapsto \lvert f \rvert(x_1,x_3)$ is of class $H^1\bigl((-1,1)\bigr)$ for almost every $x_1 \in \R$, hence continuous on almost every line $\{x_1\}\times(-1,1)$. Using Step~4, we deduce that for a.e. $x_1\in \R$, there exists an $x_3\in (-1,1)$ such that $\lvert f \rvert(x_1, x_3)=1$. Therefore, from Poincar\'e's inequality we deduce that
\begin{align*}
\int_{-1}^1 \bigl\lvert \lvert f \rvert(x_1, s)-1\bigr\rvert^2\, ds=\int_{-1}^1 \bigl\lvert \lvert f \rvert(x_1, s) - \lvert f \rvert(x_1, x_3)\bigr\rvert^2\, ds\leq C \int_{-1}^1 \lvert\partial_3  f \rvert^2(x_1, s)\, ds.
\end{align*}
Integrating in $x_1$, one obtains
\begin{align*}
\int_{\Omega} \bigl\lvert \lvert f \rvert - 1 \bigr\rvert^2 dx\leq C\int_{\Omega} \lvert \partial_3  f \rvert^2 \, dx,
\end{align*}
i.e. $\lvert f \rvert(\pm\infty,\cdot)=1$.

In particular,
\begin{align}\nonumber
  \int_{\R} \bigl\lvert \lvert \bar{f} \rvert - 1 \bigr\rvert^2 dx_1&=\tfrac{1}{2}  \int_{\Omega} \bigl\lvert \lvert \bar{f} \rvert - 1\bigr\rvert^2 dx\\
&\leq \int_\Omega \lvert f - \bar{f} \rvert^2 + \bigl\lvert \lvert f \rvert - 1 \bigr\rvert^2 \, dx \leq C\int_{\Omega} |\partial_{3} f |^2 \, dx <\infty,\label{eq:absm2bl2}
\end{align}
where we used $\bigl||f| - |\bar{f}|\bigr|\leq |f - \bar{f}|$ and the Poincar\'e-Wirtinger inequality \eqref{eq:poincm1}.

Due to $\||\bar{f}  |\|_{\dot{H}^1(\R)}=\|\bar{f}  \|_{\dot{H}^1(\R)}\leq \frac 1{\sqrt{2}} \|f \|_{\dot{H}^1(\Omega)}<\infty$, we have $|\bar{f}  |-1\in {H}^1(\R)$. Therefore,
\beq
\label{34}
\lvert \bar{f}(x_1)\rvert \to 1 \quad \textrm{ as } \quad \lvert x_1 \rvert \to \infty.\eeq
In order to conclude, we proceed as in \cite[Lemma~2]{dioreducedmodel13}: From \eqref{eq:signf} and \eqref{34} we deduce that $\lvert \bar{f} (x_1) \rvert = \bar{f} (x_1)$ and $\lvert \bar{f} (-x_1) \rvert = -\bar{f} (-x_1)$ if $x_1$ is sufficiently large, such that \eqref{eq:absm2bl2} translates into
  \begin{align*}
    \int_{\R_-} \lvert \bar{f} +1 \rvert^2 dx_1 + \int_{\R_+} \lvert \bar{f} -1 \rvert^2 dx_1 < \infty.
  \end{align*}
  This finally yields
  \begin{align*}
  \MoveEqLeft\int_{\Omega_-} \lvert f +1 \rvert^2 dx + \int_{\Omega_+} \lvert f - 1 \rvert^2 dx\\
 &\leq2 \int_{\Omega} \lvert f - \bar{f}  \rvert^2 dx + 4\int_{\R_-} \lvert \bar{f} + 1 \rvert^2 dx_1 + 4\int_{\R_+} \lvert \bar{f}  - 1 \rvert^2 dx_1 < \infty.\qedhere
  \end{align*}

\end{proof}

\begin{proof}[Proof of Lemma~\ref{lem:compsecondleadingm3}]
The proof of Lemma~\ref{lem:compsecondleadingm3} is essentially independent of that of Lemma~\ref{lem:compleadingm1}, once it is known that up to translations in $x_1$-direction there is a weak limit $f\in \Xs$ of $m^*_{2,\theta}$ in $\dot{H}^1(\Omega)$ that is admissible in $E_0$. We will now step by step use the additional information \eqref{eq:apriorisecondleading} to improve the compactness result from Lemma~\ref{lem:compleadingm1}.

\textbf{Step 1:} \textit{The limit $f=:m_2^*\in \Xs$ is a minimizer of $E_0$ and $m^*_{2,\theta} \to m^*_2$ in $\dot{H}^1(\Omega)$.}

Indeed, since $f\in \Xs$ is admissible in $E_0$ by Lemma~\ref{lem:compleadingm1} (in particular, $E_0 \leq \int_{\Omega} \lvert \nabla f \rvert^2 dx$), we only need to prove the corresponding lower bound $E_0 \geq \int_\Omega \lvert \nabla f \rvert^2\,dx$. Indeed, from \eqref{eq:apriorisecondleading} and Lemma~\ref{lem:compleadingm1} it follows
\begin{align}
\MoveEqLeft E_0 \leq \int_{\Omega} \lvert \nabla f \rvert^2 dx \leq \liminf_{\theta \tod 0} \int_\Omega \lvert \nabla m^*_{2,\theta} \rvert^2 dx \leq \limsup_{\theta \tod 0} \int_\Omega \lvert \nabla m^*_{2,\theta} \rvert^2 dx\label{eq:estm2min}\\
&\stackrel{\eqref{eq:defrescm}}{\leq} \limsup_{\theta \tod 0} \, \sin^{-2}\theta \int_\Omega \lvert \nabla m_\theta \rvert^2 dx \stackrel{\eqref{eq:apriorisecondleading}}{\leq} \limsup_{\theta \tod 0} \Bigl( E_0 + C\sin^2\theta \Bigr) = E_0.\notag
\end{align}
Therefore, $f$ is one of the minimizers $m_2^*$ of $E_0$ with representation \eqref{eq:defm2star}; moreover, \eqref{eq:estm2min} implies $\int_\Omega \lvert \nabla m^*_{2,\theta} \rvert^2 dx \to \int_\Omega \lvert \nabla m^*_2 \rvert^2 dx$ as $\theta\tod 0$, which yields strong convergence $m^*_{2,\theta} \to m^*_2$ in $\dot{H}^1(\Omega)$ as $\theta \tod 0$. 

\textbf{Step 2:} \textit{We have $\sin\theta \,\hat{m}'_\theta \to 0$ in $H^1(\Omega)$ as $\theta \tod 0$.}

Indeed, to obtain more information on the convergence of $\hat{m}'_\theta$, we combine Step~1 with \eqref{eq:apriorisecondleading}:
\begin{align*}
\sin^2\theta \int_\Omega \lvert \nabla \hat{m}'_{\theta} \rvert^2 dx \qquad &\stackrel{\mathclap{\eqref{eq:defrescm}\,\&\,\text{Step 1}}}{=} \qquad \underbrace{\sin^{-2}\theta \int_\Omega \lvert \nabla m_\theta \rvert^2 dx - E_0}_{\stackrel{\eqref{eq:apriorisecondleading}}{\leq} C \sin^2\theta} + \underbrace{\int_\Omega \lvert \nabla m^*_2 \rvert^2 - \lvert \nabla m^*_{2,\theta} \rvert^2 dx}_{\stackrel{\text{Step 1}}{=} \so(1)}\\
&\to 0 \quad \text{as } \theta \tod 0.
\end{align*}
Convergence of $\sin\theta \,\hat{m}'_\theta$ in $L^2(\Omega)$ again follows from the Poincar\'e inequalities \eqref{eq:poincm1} and \eqref{eq:poincm3}.

\textbf{Step 3:} \textit{We have $\int_\Omega \lvert \partial_1  \hat{m}_{1,\theta} \rvert \, \mu(x_1) dx \leq C<\infty$ uniformly in $0<\theta\ll 1$.}

Indeed, consider $M_{\frac{1}{2},\theta} := \{ m_{1,\theta} \leq \frac{1}{2} \}$. Using \eqref{eq:smallmeasure} for $\rho=\frac{1}{2}$, i.e.
\begin{align*}
 \lm^2(M_{\frac{1}{2},\theta})  \leq C \sin^2 \theta \quad\text{for }\theta\ll 1,
\end{align*}
the Cauchy-Schwarz inequality implies
\begin{align*}
\MoveEqLeft\int_{ M_{\frac{1}{2},\theta}} \lvert \partial_1  \hat{m}_{1,\theta} \rvert \underbrace{\mu(x_1)}_{\smash[b]{\leq \frac{\pi^2}{2}}} dx\leq \tfrac{\pi^2}{2} \left(\tfrac{\lm^2( M_{\frac{1}{2},\theta})}{\sin^2\theta}\right)^\frac{1}{2} \Bigl( \int_{M_{\frac{1}{2},\theta}} \lvert \partial_1  \bigl(\sin\theta\,\hat{m}_{1,\theta} \bigr) \rvert^2 dx \Bigr)^\frac{1}{2}\\
&\stackrel{\text{Step 2}}{\longrightarrow} 0 \quad \text{as }\theta \tod 0.
\end{align*}
It thus remains to treat the set $M^C_{\frac{1}{2},\theta} = \Omega \setminus M_{\frac{1}{2},\theta}$.

On $M^C_{\frac{1}{2},\theta}$, we have $m_{1,\theta} = \sqrt{1-m^2_{2,\theta}-m^2_{3,\theta}} \geq \frac 1 2$. Hence, we can estimate the derivative of $\hat{m}_{1,\theta}$ by using the Cauchy-Schwarz inequality and $\mu\leq \tfrac{\pi^2}{2}$:
\begin{gather}\label{eq:estpml2mu}
\begin{aligned}
\MoveEqLeft 
\int_{M^C_{\frac{1}{2},\theta}} \lvert \partial_1 \hat{m}_{1,\theta} \rvert  \, \mu(x_1) dx \stackrel{\eqref{eq:defrescm}}{=} \int_{M^C_{\frac{1}{2},\theta}} \Bigl\lvert \tfrac{m_{2,\theta} \partial_1  m_{2,\theta} + m_{3,\theta} \partial_1  m_{3,\theta}}{\sin^2\theta \sqrt{1-m^2_{2,\theta}-m^2_{3,\theta}}} \Bigr\rvert  \, \mu(x_1) dx\\
&\stackrel{\eqref{eq:defrescm}}{\leq} C \int_{M^C_{\frac{1}{2},\theta}} \Bigl( \lvert m^*_{2, \theta} \rvert^2 \mu(x_1)  + \bigl\lvert \partial_1  m^*_{2,\theta} \bigr\rvert^2 \Bigr)\, dx\\
&\qquad + C \int_{ M^C_{\frac{1}{2},\theta}} \Bigl(\bigl\lvert \sin\theta\,\hat{m}_{3,\theta}\bigr\rvert^2 + \bigl\lvert \partial_1  \bigl( \sin\theta \, \hat{m}_{3,\theta} \bigr)\bigr\rvert^2 \Bigr)\, dx\\
&\stackrel{\mathclap{\eqref{eq:poincm3}\,\&\,\eqref{eq:hardy}}}{\leq} \quad C \biggl(\int_\Omega \bigl\lvert \nabla m^*_{2,\theta} \bigr\rvert^2 + \bigl\lvert \nabla (\sin\theta \, \hat{m}_{3,\theta}) \bigr\rvert^2 \, dx \biggr)\stackrel{\text{Steps 1\&2}}{\leq} C.
\end{aligned}
\end{gather}
This concludes the proof of Lemma~\ref{lem:compsecondleadingm3}.
\end{proof}

We now turn to proving the spectral gap for approximate tangent vectors, which improves Proposition~\ref{prop:spectralgap} and in combination with the previous lemmata yields Proposition~\ref{prop:compactnessasym}.

\begin{proof}[Proof of Lemma~\ref{lem:hessest}]
Denote by $C>0$ a generic, universal constant, whose value may change from line to line.

\textbf{Step 1:} \textit{The function $g(x_1):=\dashint_{-1}^1 m_2^* \, \hat{m}_{2,\theta} \, dx_3$ satisfies}
\begin{align*}
\int_\Omega g^2 \, \mu \, dx &\leq C \int_\Omega g^2 \, \mu^\frac{1}{2} \, dx \stackrel{\eqref{eq:avorth}}{\leq} C \sin^2\theta \int_\Omega \lvert \hat{m}_\theta \rvert^4 \, \mu^\frac{1}{2} \, dx\\
&\leq C \sin^2\theta \Bigl( \int_\Omega \lvert \nabla \hat{m}_\theta \rvert^2 dx \Bigr)^2 = \so(1) \int_\Omega \lvert \nabla \hat{m}_\theta \rvert^2 dx.
\end{align*}
Indeed, while the inequalities on the first line are obvious due to the boundedness of $\mu$ and Jensen's inequality, the last equality holds since Lemma~\ref{lem:compsecondleadingm3} entails $\sin^2\theta \int_\Omega \lvert \nabla \hat{m}_\theta \rvert^2 dx = \so(1)$ as $\theta\tod 0$.

It remains to argue that $\int_\Omega \lvert \hat{m}_\theta \rvert^4 \, \mu^\frac{1}{2} \, dx \leq C\left(\int_\Omega \lvert \nabla \hat{m}_\theta \rvert^2 dx\right)^2$ holds for a constant $C>0$ independent of $\theta$: Indeed, by continuity of the Sobolev embedding $H^1\hookrightarrow L^4$, applied to $\hat{m}_\theta\mu^\frac{1}{8}$, we have
\begin{align*}
\MoveEqLeft \int_\Omega \lvert \hat{m}_\theta \rvert^4 \, \mu^\frac{1}{2} \, dx \leq C \Bigl( \int_{\Omega} \lvert \nabla (\hat{m}_\theta \mu^\frac{1}{8}) \rvert^2 + \lvert \hat{m}_\theta \rvert^2 \, \mu^\frac{1}{4} \, dx \Bigr)^2.
\end{align*}
Since $\mu$ decays exponentially, we have $\lvert \tfrac{d}{dx_1}\mu^\frac{1}{8}\rvert \lesssim \mu^{-\frac{7}{8}} \lvert \tfrac{d}{dx_1}\mu \rvert \lesssim \mu^\frac{1}{8}$. Hence, noting $\mu^\frac{1}{4} \lesssim (1+x_1^2)^{-1} \lesssim 1$, the Poincar\'e inequalities \eqref{eq:poincm1} and \eqref{eq:poincm3} (for $\hat{m}_\theta'$) and Hardy's inequality \eqref{eq:hardy} (for $\hat{m}_{2,\theta}$) yield
\begin{align*}
\MoveEqLeft \int_\Omega \lvert \hat{m}_\theta \rvert^4 \, \mu^\frac{1}{2} \, dx \leq C \Bigl( \int_{\Omega} \lvert \nabla \hat{m}_\theta \rvert^2 + \lvert \hat{m}_\theta \rvert^2 \, \mu^\frac{1}{4} \, dx \Bigr)^2 \leq C \Bigl( \int_\Omega \lvert \nabla \hat{m}_\theta \rvert^2 dx \Bigr)^2.
\end{align*}

\textbf{Step 2:} \textit{Projecting $\hat{m}_{2,\theta}$ onto the ``tangent space'' $\{f \;\big\vert\; \int_{-1}^1 m_2^*\, f \, dx_3 = 0\}$ yields
\begin{align*}
B(\hat{m}_{2,\theta},\hat{m}_{2,\theta}) \geq \tfrac{1}{5}\int_\Omega \hat{m}_{2,\theta}^2 \, \mu \, dx - \so(1)\int_\Omega \lvert \nabla \hat{m}_\theta \rvert^2 dx.
\end{align*}
}

Indeed, we define the projection $\psi$ of $\hat{m}_{2,\theta}$ onto $\{f \;\big\vert\; \int_{-1}^1 m_2^*\, f \, dx_3 = 0\}$ via
\begin{align*}
\psi(x):=\hat{m}_{2,\theta}(x)-g(x_1) m_2^*(x) \quad \text{for all }x\in\Omega,
\end{align*}
where $g=g(x_1)$ has been introduced in Step~1. Then one computes
\begin{align}\label{eq:decompB}
B(\hat{m}_{2,\theta},\hat{m}_{2,\theta}) = B({\psi},{\psi}) + 2B(\psi, g m_2^*) + B(g m_2^*, g m_2^*).
\end{align}
We estimate each term separately.

For the third term, using $\dashint_{-1}^1 (m_2^*)^2 dx_3 \equiv 1$ and $g^2\mu=\bigl(g^2\mu\bigr)(x_1)$, we trivially have
\begin{align}\label{eq:estimateB3}
B(gm_2^*,gm_2^*) = \int_\Omega \lvert \nabla (gm_2^*) \rvert^2 dx - \int_\Omega (gm_2^*)^2 \, \mu \, dx \geq -\int_\Omega g^2 \, \mu \, dx.
\end{align}
For the first term in \eqref{eq:decompB}, we observe that $\psi\in \dot{H}^1(\Omega)$ satisfies the assumptions of Proposition~\ref{prop:spectralgap}, i.e. $\bar{\psi}(0)=0$, and $\int_{-1}^1 m_2^* \, \psi \, dx_3 \equiv 0$. Hence, the spectral gap of $B$ yields
\begin{align}\label{eq:estimateB1}
B({\psi},{\psi}) &\geq \tfrac{1}{5} \int_\Omega \lvert \nabla \psi \rvert^2 + \psi^2 \, \mu \, dx = \tfrac{1}{5} \int_\Omega \lvert \nabla \psi \rvert^2 + \hat{m}_{2,\theta}^2 \, \mu \, dx - \tfrac{1}{5} \int_\Omega g^2 \, \mu \, dx
\end{align}
Note that in the last step we used $\mu=\mu(x_1)$ and that by definition we have $\int_{-1}^1 \psi^2 \, dx_3 \equiv \int_{-1}^1 \hat{m}_{2,\theta}^2-g^2 \, dx_3$.

Finally, we rewrite the second term in \eqref{eq:decompB}:
\begin{align*}
\MoveEqLeft B(\psi, g m_2^*) =\int_\Omega \nabla \psi\cdot \nabla (g m_2^*) - g m_2^* \psi \, \mu \, dx\\
&=\int_\Omega \nabla (\psi g)\cdot \nabla  m_2^* - (\psi g) m_2^*\, \mu + m_2^* \nabla \psi \cdot \nabla g - \psi \nabla g \cdot\nabla m_2^* \, dx\\
&\stackrel{\mathclap{\eqref{eq:ele0}}}{=}\,\int_{\R} \tfrac{d}{dx_1} g\int_{-1}^1  m_2^* \partial_1  \psi -\psi \partial_1  m_2^* \, dx_3 \, dx_1\\
&=-2\int_{\R} \tfrac{d}{dx_1} g \int_{-1}^1 \psi \partial_1  m_2^* \, dx_3 \, dx_1 = 2\int_{\Omega} g \partial_1 (\psi \partial_1  m_2^*) \, dx,
\end{align*}
where we used the fact that
\begin{align*}
0 = \tfrac{d}{dx_1} \int_{-1}^1\psi m_2^*\, dx_3 = \int_{-1}^1  m_2^* \partial_1  \psi +\psi \partial_1  m_2^* \, dx_3
\end{align*}
and, in the last step, integration by parts ($g(\pm \infty)=0$ since $\bar{\hat{m}}_{2,\theta}$ and thus also $g \in H^1(\R)$). By \eqref{eq:defm2star}, we have $\lvert \partial_1  m_2^* \rvert, \lvert \partial^2_1 m_2^* \rvert \leq C\mu^\frac{1}{2}$ in $\Omega$ so that Young's inequality and Hardy's inequality \eqref{eq:hardy} yield
\begin{align}\label{eq:estimateB2}
2 \lvert B(\psi,gm_2^*) \rvert \leq C \int_\Omega \lvert g \rvert ( \lvert \partial_1 \psi \rvert + \lvert \psi \rvert ) \mu^\frac{1}{2} \, dx \leq \tfrac{1}{5} \int_\Omega \lvert \nabla \psi \rvert^2 dx + C \int_\Omega g^2 \mu^\frac{1}{2} dx.
\end{align}
Thus, in view of Step~1, using \eqref{eq:estimateB3}-\eqref{eq:estimateB2} in \eqref{eq:decompB} concludes Step~2.

\textbf{Step 3:} \textit{Conclusion.}

Trivially, we have
\begin{align*}
\MoveEqLeft \int_\Omega \lvert \nabla \hat{m}_{2,\theta} \rvert^2  + \hat{m}_{2,\theta}^2 \, \mu \, dx = B(\hat{m}_{2,\theta},\hat{m}_{2,\theta}) + 2\int_\Omega \hat{m}_{2,\theta}^2 \, \mu \,dx\\
&\stackrel{\text{Step~2}}{\leq} 11 B(\hat{m}_{2,\theta},\hat{m}_{2,\theta}) + \so(1) \int_\Omega \lvert \nabla \hat{m}_\theta \rvert^2 dx.
\end{align*}
This yields \eqref{eq:hessest}.
\end{proof}

\begin{proof}[Proof of Proposition~\ref{prop:compactnessasym}]
By assumption, \eqref{eq:apriorisecondleading} is satisfied so that we may apply Lemma~\ref{lem:compleadingm1} and Lemma~\ref{lem:compsecondleadingm3} to the sequence $\{m_\theta\}_\theta$. This already yields $m_{2,\theta}=\sin\theta\,m_2^* + \so(\sin\theta)$ in $\dot{H}^1(\Omega)$ as $\theta\tod 0$ for a minimizer $m_2^*$ of \eqref{eq:defe0} as in \eqref{eq:defm2star}.

In the remainder of the proof, we first explain why $B(\hat{m}_\theta',\hat{m}_\theta')$ is harmless and apply Lemma~\ref{lem:hessest} to control $\hat{m}_\theta$ in $\dot{H}^1(\Omega)$. Then, simple lower-semicontinuity arguments yield the bounds \eqref{eq:secondorderliminf} and \eqref{eq:secondorderlimsup}.

\textbf{Step 1:} \textit{For every $\delta>0$, there exists a constant $C_\delta<\infty$, independent of $\theta\tod 0$, such that
\begin{align}\label{eq:mhatl2dmusmall}
\int_\Omega \lvert \hat{m}_\theta' \rvert^2 \, \mu \, dx \leq \delta \int_\Omega \lvert \nabla \hat{m}_\theta' \rvert^2 dx + C_\delta\quad\text{as }\theta\tod 0.
\end{align}
In particular, the sequence $\{\hat{m}_\theta\}_\theta$ is bounded uniformly in $\dot{H}^1(\Omega)$ and therefore -- up to a subsequence -- converges weakly in $\dot{H}^1(\Omega)$ as well as strongly in $L^2(\Omega,\mu dx)$. The weak limit of $\hat{m}_\theta'$ is given by $\hat{m}'$ as defined in \eqref{eq:defhatm}.
}

We start by bounding $\hat{m}_{1,\theta}$ in $L^2(\Omega,\mu dx)$: Indeed, for $L\gg 1$ such that we have $\sup_{\lvert x_1 \rvert > L}\mu\leq \delta \tfrac{\pi^2}{4}$, Poincar\'e's inequality in $x_3$ yields
\begin{align*}
\int_{\{\lvert x_1 \rvert > L\}} \hat{m}_{1,\theta}^2 \, \mu\, dx \leq \delta \int_{\Omega\setminus \Omega_L} \lvert \nabla \hat{m}_{1,\theta} \rvert^2 dx.
\end{align*}
Thus, the first half of \eqref{eq:mhatl2dmusmall} follows after recalling that by Lemma~\ref{lem:compleadingm1} the sequence $\{\hat{m}_{1,\theta}\}_\theta$ is bounded uniformly in $L^2_\text{loc}(\Omega)$.

For $\hat{m}_{3,\theta}$, we can argue via the interpolation inequality\footnote{%
W.l.o.g. $u:=\hat{m}_{3,\theta}\in H^1_0(\Omega)$ is compactly supported and smooth. Then we have
\begin{align*}
u^2(x_1,x_3) \mu(x_1) &= \lvert u(x_1,x_3)-u(x_1,-1) \rvert \mu(x_1) \; \lvert u^2(x_1,x_3)-u^2(-\infty,x_3) \rvert^\frac{1}{2}\\
&\leq \int_{-1}^1 \lvert \partial_3 u(x_1,y_3) \rvert \, \mu(x_1) \, dy_3 \underbrace{\left(\int_\R \lvert \partial_1 u^2(y_1,x_3) \rvert \, dy_1\right)^\frac{1}{2}}_{\lesssim \left( \int_\R \lvert \partial_1 u \rvert^2 dy_1 \int_\R u^2 dy_1\right)^\frac{1}{4}}
\end{align*}
Integrating over $\Omega$ and applying Cauchy-Schwarz' and Poincar\'e's inequality yields the result.
}
\begin{align*}
\int_{\Omega} \hat{m}_{3,\theta}^2 \, \mu \, dx &\leq C \int_\Omega \lvert \underbrace{\partial_3 \hat{m}_{3,\theta}}_{\mathclap{=-\partial_1 \hat{m}_{1,\theta}}} \rvert \, \mu \,dx \Bigl( \int_{\Omega} \lvert \nabla \hat{m}_{3,\theta} \rvert^2 dx\Bigr)^\frac{1}{2}.
\end{align*}
Since by Lemma~\ref{lem:compsecondleadingm3} the sequence $\{\partial_1\hat{m}_{1,\theta}\}_\theta$ is bounded uniformly in $L^1(\Omega,\mu dx)$, Young's inequality yields the second half of \eqref{eq:mhatl2dmusmall}.

Regarding boundedness of $\{\hat{m}_\theta\}_\theta$, we have
\begin{align*}
\infty &> C\stackrel{\eqref{eq:secondorderupperbound}\,\&\,\text{Lem.~\ref{lem:calc_average_second}}}{\geq} B(\hat{m}_\theta,\hat{m}_\theta) = \int_\Omega \lvert \nabla \hat{m}_\theta' \rvert^2 dx - \int_\Omega \lvert \hat{m}_\theta' \rvert^2 \, \mu \, dx + B(\hat{m}_{2,\theta},\hat{m}_{2,\theta})\\
&\geq \bigl(1-\delta-\so(1)\bigr) \int_\Omega \lvert \nabla \hat{m}_\theta' \rvert^2 dx - C_\delta + \varepsilon \int_\Omega \lvert \nabla \hat{m}_{2,\theta} \rvert^2 dx,
\end{align*}
where we applied \eqref{eq:mhatl2dmusmall} and Lemma~\ref{lem:hessest}.

Thus, for $\delta$ and $\theta$ sufficiently small, $\{\hat{m}_\theta\}_\theta$ is bounded uniformly in $\dot{H}^1(\Omega)$. In particular, we may extract a subsequence for which $\nabla \hat{m}_\theta$ converges weakly in $L^2(\Omega)$. Strong convergence of $\{\hat{m}_\theta\}_\theta$ in $L^2(\Omega,\mu dx)$ follows from Rellich's theorem on sufficiently large but compact subsets and Poincar\'e's/Hardy's inequality on the (negligible) tails of the density $\mu$.

Comparing the weak limit of $\{ \hat{m}_{1,\theta}\}_\theta$ to the one obtained in Lemma~\ref{lem:compleadingm1}~and~\ref{lem:compsecondleadingm3}, and passing to the limit in the equation $\nabla \cdot \hat{m}_\theta' = 0$ in $\mathcal{D}'(\R^2)$, identifies $\hat{m}'$ as the function defined in \eqref{eq:defhatm} and thereby concludes Step~1.

\textbf{Step 2:} \textit{The lower bounds \eqref{eq:secondorderliminf} and \eqref{eq:secondorderlimsup} hold along the subsequence obtained in Step~1.}

Indeed, Lemma~\ref{lem:calc_average_second} shows
\begin{align*}
\sin^{-4}\theta \Bigl( \int_\Omega \lvert \nabla m_\theta \rvert^2 dx - \sin^2\theta \int_\Omega \lvert \nabla m_2^* \rvert^2 dx \Bigr) = B(\hat{m}_\theta',\hat{m}_\theta') + B(\hat{m}_{2,\theta},\hat{m}_{2,\theta}).
\end{align*}
By Step~1, we have 
\begin{align*}
\limsup_{\theta\tod 0} B(\hat{m}_\theta',\hat{m}_\theta') \geq \liminf_{\theta\tod 0} B(\hat{m}_\theta',\hat{m}_\theta') \geq B(\hat{m}',\hat{m}') = E_1.
\end{align*}
Using boundedness of $\{\hat{m}_\theta'\}_\theta$ in $\dot{H}^1(\Omega)$, Lemma~\ref{lem:hessest} yields
\begin{align*}
B(\hat{m}_{2,\theta},\hat{m}_{2,\theta}) \geq \varepsilon \int_\Omega \lvert \nabla \hat{m}_{2,\theta} \rvert^2 dx - \so(1) \quad\text{as }\theta\tod 0.
\end{align*}
Hence, both \eqref{eq:secondorderliminf} and \eqref{eq:secondorderlimsup} easily follow.
\end{proof}

\section{Upper bound}\label{sec:upper}
We prove the upper bound for $\EA(\theta)$ that we relied on in the previous section:
\medskip
\begin{prop}\label{prop:upperbd}
For every $0<\theta\ll 1$ there exists a smooth function $m_\theta \in X_0\cap \Xt$ that is admissible in $\EA(\theta)$ and satisfies the estimate
\begin{align}\label{eq:upperbd}
\EA(\theta) \leq \int_\Omega \lvert\nabla m_\theta \rvert^2 dx \leq E_0 \sin^2\theta + E_1 \sin^4\theta + \co(\theta^6),
\end{align}
where $E_0$ and $E_1$ have been defined in \eqref{eq:defe0} and \eqref{eq:defe1}, respectively.
\end{prop}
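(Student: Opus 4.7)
The plan is to build an explicit smooth competitor $m_\theta \in X_0 \cap X^\theta$ guided by the expansion \eqref{asym_doi} and to evaluate its exchange energy via Lemma~\ref{lem:calc_average_second}. I fix one of the minimizers $m_2^*$ of $E_0$ as in \eqref{eq:defm2star} and start from the natural ansatz
\begin{equation*}
\tilde m_\theta = \bigl( \cos\theta + \sin^2\theta\, \hat m_1,\ \sin\theta\, m_2^*,\ \sin^2\theta\, \hat m_3 \bigr),
\end{equation*}
with $\hat m = (\hat m_1, 0, \hat m_3)$ defined by \eqref{eq:defhatm}. By the very construction of $\hat m$, the in-plane part of $\tilde m_\theta$ is divergence-free, $\tilde m_{3,\theta} \equiv 0$ on $\partial\Omega$ (from $\hat m_3\big\vert_{\partial\Omega} = 0$), $\overline{\tilde m_{1,\theta}} \equiv \cos\theta$ (from $\overline{\hat m_1} \equiv 0$), and the boundary conditions at $x_1 = \pm\infty$ hold. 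The only failure is $\lvert \tilde m_\theta \rvert^2 = 1 + \co(\sin^4\theta)$.

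To genuinely project onto $\Sp^2$ without destroying the divergence-free condition or the boundary conditions, I retain $m_{2,\theta} := \sin\theta\, m_2^*$ and re-parametrize the in-plane part in polar coordinates:
\begin{equation*}
m_{1,\theta} = r_\theta \cos\Phi_\theta, \qquad m_{3,\theta} = r_\theta \sin\Phi_\theta, \qquad r_\theta := \sqrt{1 - \sin^2\theta\, (m_2^*)^2}.
\end{equation*}
Then $\lvert m_\theta \rvert \equiv 1$ is automatic, while $\nabla \cdot m_\theta' = 0$ becomes a quasilinear first-order scalar equation for $\Phi_\theta$, to be solved with Cauchy data $\Phi_\theta\big\vert_{\{x_3 = -1\}} = 0$ so that $m_{3,\theta} = 0$ on the lower boundary. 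Expanding $\Phi_\theta = \sin^2\theta\, \Phi^{(1)} + \co(\sin^4\theta)$ and integrating the leading-order ODE in $x_3$ yields $\Phi^{(1)} = \hat m_3$; the crucial compatibility $\Phi^{(1)}(\cdot, 1) = 0$ -- required for $m_{3,\theta} = 0$ on the upper boundary -- follows from $\hat m_3\big\vert_{x_3 = 1} = 0$, itself a direct consequence of $\overline{(m_2^*)^2} \equiv 1$. Decay of $\Phi_\theta$ at $x_1 = \pm\infty$ is inherited from the exponential decay of $\hat m$. The $\sin^4\theta$-order corrections are obtained by continuing the iteration, using the explicit profiles \eqref{eq:valm2}--\eqref{eq:valp1m3hat}; this ensures $m_\theta$ is smooth and in $X_0 \cap X^\theta$ (the sign change of $m_{2,\theta}$ being that of $m_2^*$).

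The energy of $m_\theta$ is then computed via Lemma~\ref{lem:calc_average_second}: writing $m_\theta = (\cos\theta, \sin\theta\, m_2^*, 0) + \sin^2\theta\, \hat m_\theta$, one obtains $\int_\Omega \lvert \nabla m_\theta \rvert^2 dx = \sin^2\theta \int_\Omega \lvert \nabla m_2^* \rvert^2 dx + \sin^4\theta\, B(\hat m_\theta, \hat m_\theta)$. Direct Taylor expansion of the polar ansatz gives $\hat m_{2,\theta} \equiv 0$, $\hat m_{1,\theta} = \hat m_1 + \co(\sin^2\theta)$, and $\hat m_{3,\theta} = \hat m_3 + \co(\sin^2\theta)$, with convergence controlled in $\dot H^1(\Omega) \cap L^2(\Omega,\mu dx)$. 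Continuity of the bilinear form $B$ on that space (cf.\ Proposition~\ref{prop:spectralgap}) then yields $B(\hat m_\theta, \hat m_\theta) = B(\hat m, \hat m) + \co(\sin^2\theta) = E_1 + \co(\sin^2\theta)$. Combined with Proposition~\ref{prop:minofe0} ($\int_\Omega \lvert \nabla m_2^* \rvert^2 dx = E_0 = 4\pi$) and the explicit value $B(\hat m, \hat m) = E_1 = \tfrac{148}{35}\pi$ from Section~\ref{sec:valueofe1}, this produces the upper bound \eqref{eq:upperbd} with error $\co(\theta^6)$.

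The main obstacle is the global, $\theta$-uniform construction of $\Phi_\theta$ together with the strong $\dot H^1$-convergence of the rescaled residues $\hat m_\theta \to \hat m$, both of which rely on the exponential decay of $1 - (m_2^*)^2$ and of $\hat m$ at $x_1 = \pm\infty$. An equivalent stream-function formulation $m_\theta' = \nabla^\perp \psi_\theta$ leads directly to the Eikonal equation $\lvert \nabla \psi_\theta \rvert^2 = 1 - m_{2,\theta}^2$ with two-sided boundary data $\psi_\theta\big\vert_{x_3=\pm 1}$ prescribed, which is over-determined; the polar-angle approach circumvents this precisely by trading an algebraic constraint for a single Cauchy condition on $\Phi_\theta$.
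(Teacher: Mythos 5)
Your polar-angle reformulation does not circumvent the over-determinedness of the eikonal equation; it merely restates it, and the proposed construction therefore produces a map that is not admissible in $\EA(\theta)$. Indeed, fix $m_{2,\theta}=\sin\theta\, m_2^*$ exactly, as you do, and suppose for contradiction that a unit-length field $m_\theta\in X_0\cap X^\theta$ with this second component exists. Membership in $X_0\cap X^\theta$ forces $\dashint_{-1}^1 m_{1,\theta}\,dx_3\equiv\cos\theta$, while the admissibility of $m_2^*$ in $\Xs$ gives $\dashint_{-1}^1 (m_2^*)^2\,dx_3\equiv 1$; combining these with $|m_\theta|\equiv1$ yields
\begin{align*}
\dashint_{-1}^1 \bigl(m_{1,\theta}^2+m_{3,\theta}^2\bigr)\,dx_3 = 1-\sin^2\theta\dashint_{-1}^1 (m_2^*)^2\,dx_3 = \cos^2\theta \leq \dashint_{-1}^1 m_{1,\theta}^2\,dx_3
\end{align*}
by Jensen's inequality. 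Hence $m_{3,\theta}\equiv0$ and, by the equality case, $m_{1,\theta}\equiv\cos\theta$, which contradicts $(m_2^*)^2\not\equiv1$. So no choice of $\Phi_\theta$ can succeed: solving $\nabla\cdot m_\theta'=0$ from the single Cauchy datum $\Phi_\theta|_{x_3=-1}=0$ will leave $\Phi_\theta|_{x_3=1}\neq0$ at order $\sin^4\theta$, and ``continuing the iteration'' cannot repair this without either violating $|m_\theta|=1$ or abandoning $m_{2,\theta}=\sin\theta\,m_2^*$. Your leading-order check $\Phi^{(1)}(\cdot,1)=\hat m_3(\cdot,1)=0$ is a miracle of the first order only.

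The paper confronts exactly this obstruction and resolves it by giving up the exact identity $m_{2,\theta}=\sin\theta\, m_2^*$ away from the zero curve $\gamma$ of $m_2^*$: it constructs three stream functions $\psi^t$, $\psi^b$, $\psi^m$, each solving the eikonal equation with only \emph{one} boundary/Cauchy condition, blends them with cutoffs in $x_3$, and then defines $m_{2,\theta}:=\pm\sqrt{1-|\nabla\psi_\theta|^2}$. In the blending zones $\Omega^t_\text{inter}\cup\Omega^b_\text{inter}$ this $m_{2,\theta}$ equals $\sin\theta\,m_2^*$ only up to $\co(\lVert F-\tilde F\rVert_{\infty,\text{loc}})=\co(\theta^4)$, which is precisely the slack needed to recover both $m_3=0$ on $\partial\Omega$ and $|m_\theta|=1$ exactly. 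This also means your claim $\hat m_{2,\theta}\equiv0$ cannot hold for the admissible competitor (the paper's recovery family has $\hat m_{2,\theta}=\sin\theta\,\phi_2$ with $\phi_2$ nontrivial), although that correction is small enough not to affect the value of $E_1$. To rescue your argument along the polar-angle route, you would similarly have to perturb $m_{2,\theta}$ (equivalently, modify $r_\theta$) in a thin region so as to make the transport problem for $\Phi_\theta$ compatible with both boundary conditions, after which the expansion and the appeal to Lemma~\ref{lem:calc_average_second} would go through.
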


\subsection{Strategy of the proof}
For minimizers of $\EA(\theta)$, $\theta\ll 1$, we have already identified the leading-order behavior of the second component as $\sin \theta \, m^*_2$ (see \eqref{eq:defm2star}). Therefore, we will begin the construction by making the ansatz $m_{2,\theta} := \sin\theta\, m^*_2$ and searching for configurations $m'_\theta$ that satisfy
\begin{align*}
\lvert m'_\theta \rvert^2 = 1-\sin^2\theta \,(m^*_2)^2 \quad \text{and} \quad m'_\theta(\pm\infty,\cdot)= (\cos \theta, 0).
\end{align*}
Since also \eqref{flux-clos} needs to be satisfied, the construction of $m'_\theta$ is equivalent to finding a ``stream function'' $\psi_\theta \colon \Omega \to \R$ such that $m'_\theta = \nabla^\perp \psi_\theta$ in $\Omega$ (hence $\nabla\cdot m'=0$ in $\Omega$) and $\psi_\theta$ is constant on each component of $\{x_3=\pm 1\}$ (leading to $m_3=0$ on $\partial \Omega$). The limit condition $\cos \theta=m_{1, \theta}=-\partial_3 \psi_\theta$ at $x_1=\pm\infty$ fixes the difference of those constants: Indeed, setting $\psi_\theta(\cdot, -1):=0$ on $\R$, we deduce that $\psi_\theta(\cdot, 1)=\int_{-1}^1 \partial_3 \psi_\theta(\cdot, s)\, ds=-2\cos \theta$. Therefore, $\psi_\theta$ has to satisfy:
\begin{align}
\lvert \nabla \psi_\theta \rvert^2 &= 1-\sin^2\theta \, (m_2^*)^2 \quad \text{in } \Omega,\label{eq:eik}\\
\psi_\theta &= \mathrlap{0}\phantom{1-\sin^2\theta \, (m_2^*)^2}\quad \text{on }\{x_3=-1\},\label{eq:bcb}\\
\psi_\theta &= \mathrlap{-2\cos\theta}\phantom{1-\sin^2\theta \, (m_2^*)^2}\quad \text{on }\{x_3=+1\}.\label{eq:bct}
\end{align}
However, there is no solution of the ``perturbed'' eikonal equation \eqref{eq:eik} with \eqref{eq:bcb} \& \eqref{eq:bct}.
Indeed, integrating \eqref{eq:eik} on $\{x_1\}\times (-1,1)$ for every $x_1\in \R$, we deduce:
\begin{align*}
\cos^2 \theta&=1-\sin^2\theta \dashint_{-1}^1 (m^*_2)^2(x_1, s)\, ds \stackrel{\eqref{eq:eik}}{=}\dashint_{-1}^1 |\nabla \psi_\theta|^2(x_1, s)\, ds\\
&\geq \dashint_{-1}^1 |\partial_3  \psi_\theta|^2(x_1, s)\, ds\geq \bigg(\dashint_{-1}^1 \partial_3  \psi_\theta(x_1, s)\, ds\bigg)^2=\cos^2 \theta.
\end{align*}
It means that $\partial_1  \psi_\theta\equiv 0$ and $\partial_3  \psi_\theta\equiv -\cos \theta$ in $\Omega$, i.e., 
\begin{align*} %\label{approxi_psi}
\psi_\theta(x)=-\cos \theta\,(x_3+1) \quad \text{in } \Omega,
\end{align*}
which is a contradiction to \eqref{eq:eik}.

Therefore, we will solve \eqref{eq:eik} imposing only one boundary condition at a time. Denote by
\begin{align*}
\psi_\theta^b \text{ and } \psi_\theta^t \text{ the solutions of \eqref{eq:eik} \& \eqref{eq:bcb} and \eqref{eq:eik} \& \eqref{eq:bct}, respectively}.
\end{align*}
We will then try to blend the two solutions $\psi_\theta^b$ and $\psi_\theta^t$ into one. This leads to a new difficulty: The blended function will not solve \eqref{eq:eik} anymore, but an equation of the form $ \lvert \nabla \psi \rvert^2 = 1-\sin^2\theta \, v^2$ with $v^2=(m^*_2)^2+\co(\theta^2)$. Yet the function $v^2$ will in general not have a curve $\gamma$ of zeros that connects $\{x_3=1\}$ with $\{x_3=-1\}$; note that the existence of such a curve is necessary to define $m_{2,\theta}:=\pm \sin \theta \, \lvert v \rvert$ with the suitable sign to the left and right of $\gamma$ so that \eqref{eq:bcinfty} holds. To fix this problem, we solve \eqref{eq:eik} a third time for suitably chosen boundary data on the curve of zeros of $m^*_2$, calling this solution $\psi^m_\theta$, and blend all three solutions into one, according to Figure~\ref{fig:combsolutions}.
\begin{figure}
\centering
\begin{pspicture}(-5,-2)(5,2)
  \psline(-5,2)(5,2)
  \psline(-5,-2)(5,-2)
  % zeros of m_2^*
  \parametricplot[plotstyle=line,plotpoints=51]{-1}{1}{90 t mul sin dup dup mul 2 mul 1 add sqrt div 2 sqrt neg mul 1 add dup neg 2 add div ln 3.1415 div 2 mul 2 t mul} % plot line where m^*_2 vanishes, last two commands only needed to have the right scaling for the plot
  \rput(0,2){%
    \psline(0,-0.1)(0,0.1)
    \rput[b](0,0.15){\psscalebox{0.6}{$0$}}
  }
  \rput(0.75,2){%
    \psline(0,-0.1)(0,0.1)
    \rput[b](0,0.15){\psscalebox{0.6}{$\frac{3}{8}$}}
  }
  \rput(2,2){%
    \psline(0,-0.1)(0,0.1)
    \rput[b](0,0.15){\psscalebox{0.6}{$1$}}
  }
  \rput(4,2){%
    \psline(0,-0.1)(0,0.1)
    \rput[b](0,0.15){\psscalebox{0.6}{$2$}}
  }
  \rput(-0.75,2){%
    \psline(0,-0.1)(0,0.1)
    \rput[b](0,0.15){\psscalebox{0.6}{$-\frac{3}{8}$}}
  }
  \rput(-2,2){%
    \psline(0,-0.1)(0,0.1)
    \rput[b](0,0.15){\psscalebox{0.6}{$-1$}}
  }
  \rput(-4,2){%
    \psline(0,-0.1)(0,0.1)
    \rput[b](0,0.15){\psscalebox{0.6}{$-2$}}
  }
  \psline[linestyle=dotted](0,2)(0,-2)
  \rput[l](5.1,2){\psscalebox{0.6}{$1$}}
  \rput[l](5.1,1.5){\psscalebox{0.6}{$\frac{3}{4}$}}
  \rput[l](5.1,1){\psscalebox{0.6}{$\frac{1}{2}$}}
  \rput[r](-5.1,-1){\psscalebox{0.6}{$-\frac{1}{2}$}}
  \rput[r](-5.1,-1.5){\psscalebox{0.6}{$-\frac{3}{4}$}}
  \rput[l](5.1,-2){\psscalebox{0.6}{$-1$}}
  % blended functions
  \psline(5,1.5)(-0.75,1.5)(-0.75,1)(5,1)
  \psline(-5,-1.5)(0.75,-1.5)(0.75,-1)(-5,-1)
  \psline(-0.75,1)(-0.75,-1)
  \psline(0.75,1)(0.75,-1)
  % markings bottom
  \psset{linewidth=0.005}
  \rput(-5,-2){\psline(0,0)(0.5,0.5)}
  \rput(-4.5,-2){\psline(0,0)(0.5,0.5)}
  \rput(-4,-2){\psline(0,0)(0.5,0.5)}
  \rput(-3.5,-2){\psline(0,0)(0.5,0.5)}
  \rput(-3,-2){\psline(0,0)(0.5,0.5)}
  \rput(-2.5,-2){\psline(0,0)(0.5,0.5)}
  \rput(-2,-2){\psline(0,0)(0.5,0.5)}
  \rput(-1.5,-2){\psline(0,0)(0.5,0.5)}
  \rput(-1,-2){\psline(0,0)(0.5,0.5)}
  \rput(-0.5,-2){\psline(0,0)(0.5,0.5)}
  \rput(0,-2){\psline(0,0)(0.5,0.5)}
  \rput(0.5,-2){\psline(0,0)(3,3)}
  \rput(1,-2){\psline(0,0)(3,3)}
  \rput(1.5,-2){\psline(0,0)(3,3)}
  \rput(2,-2){\psline(0,0)(3,3)}
  \rput(2.5,-2){\psline(0,0)(2.5,2.5)}
  \rput(3,-2){\psline(0,0)(2,2)}
  \rput(3.5,-2){\psline(0,0)(1.5,1.5)}
  \rput(4,-2){\psline(0,0)(1,1)}
  \rput(4.5,-2){\psline(0,0)(0.5,0.5)}
  \psline(0.75,-1.25)(3,1)
  \psline(0.75,-0.75)(2.5,1)
  \psline(0.75,-0.25)(2,1)
  \psline(0.75,0.25)(1.5,1)
  \psline(0.75,0.75)(1,1)
  % markings top
  \rput(4.5,1.5){\psline(0,0.5)(0.5,0)}
  \rput(4,1.5){\psline(0,0.5)(0.5,0)}
  \rput(3.5,1.5){\psline(0,0.5)(0.5,0)}
  \rput(3,1.5){\psline(0,0.5)(0.5,0)}
  \rput(2.5,1.5){\psline(0,0.5)(0.5,0)}
  \rput(2,1.5){\psline(0,0.5)(0.5,0)}
  \rput(1.5,1.5){\psline(0,0.5)(0.5,0)}
  \rput(1,1.5){\psline(0,0.5)(0.5,0)}
  \rput(0.5,1.5){\psline(0,0.5)(0.5,0)}
  \rput(0,1.5){\psline(0,0.5)(0.5,0)}
  \rput(-0.5,1.5){\psline(0,0.5)(0.5,0)}
  \rput(-1,1.5){\psline(0,0.5)(0.5,0)}
  \rput(-5,-1){\psline(0,3)(3,0)}
  \rput(-4.5,-1){\psline(0,3)(3,0)}
  \rput(-4,-1){\psline(0,3)(3,0)}
  \rput(-3.5,-1){\psline(0,3)(2.75,0.25)}
  \rput(-3,-1){\psline(0,3)(2.25,0.75)}
  \rput(-2.5,-1){\psline(0,3)(1.75,1.25)}
  \rput(-2,-1){\psline(0,3)(1.25,1.75)}
  \rput(-1.5,-1){\psline(0,3)(0.75,2.25)}
  \psline(-5,1.5)(-2.5,-1)
  \psline(-5,1)(-3,-1)
  \psline(-5,0.5)(-3.5,-1)
  \psline(-5,0)(-4,-1)
  \psline(-5,-0.5)(-4.5,-1)
  % markings middle
  \psline(-0.75,-0.75)(-0.5,-1)
  \psline(0.5,-1)(0.75,-0.75)
  \pscurve(-0.75,-0.25)(-0.5,-0.5)(0,-1)(0.5,-0.5)(0.75,-0.25)
  \pscurve(-0.75,0.25)(-0.5,0)(0,-0.5)(0.5,0)(0.75,0.25)
  \pscurve(-0.75,0.75)(-0.5,0.5)(0,0)(0.5,0.5)(0.75,0.75)
  \pscurve(-0.5,1)(0,0.5)(0.5,1)
  % markings blending...
  \rput(-5,-1.5){\psbezier(0,0)(0.2,0.2)(-0.2,0.3)(0,0.5)}
  \rput(-4.5,-1.5){\psbezier(0,0)(0.2,0.2)(-0.2,0.3)(0,0.5)}
  \rput(-4,-1.5){\psbezier(0,0)(0.2,0.2)(-0.2,0.3)(0,0.5)}
  \rput(-3.5,-1.5){\psbezier(0,0)(0.2,0.2)(-0.2,0.3)(0,0.5)}
  \rput(-3.0,-1.5){\psbezier(0,0)(0.2,0.2)(-0.2,0.3)(0,0.5)}
  \rput(-2.5,-1.5){\psbezier(0,0)(0.2,0.2)(-0.2,0.3)(0,0.5)}
  \rput(-2.0,-1.5){\psbezier(0,0)(0.2,0.2)(-0.2,0.3)(0,0.5)}
  \rput(-1.5,-1.5){\psbezier(0,0)(0.2,0.2)(-0.2,0.3)(0,0.5)}
  \rput(-1.0,-1.5){\psbezier(0,0)(0.2,0.2)(-0.2,0.3)(0,0.5)}
  \rput(-0.5,-1.5){\psbezier(0,0)(0.2,0.2)(-0.2,0.3)(0,0.5)}
  \rput(0.0,-1.5){\psbezier(0,0)(0.2,0.2)(-0.2,0.3)(0,0.5)}
  \rput(0.5,-1.5){\psbezier(0,0)(0.2,0.2)(-0.2,0.3)(0,0.5)}
  \rput(-0.5,1){\psbezier(0,0)(0.2,0.2)(-0.2,0.3)(0,0.5)}
  \rput(0.0,1){\psbezier(0,0)(0.2,0.2)(-0.2,0.3)(0,0.5)}
  \rput(0.5,1){\psbezier(0,0)(0.2,0.2)(-0.2,0.3)(0,0.5)}
  \rput(1.0,1){\psbezier(0,0)(0.2,0.2)(-0.2,0.3)(0,0.5)}
  \rput(1.5,1){\psbezier(0,0)(0.2,0.2)(-0.2,0.3)(0,0.5)}
  \rput(2.0,1){\psbezier(0,0)(0.2,0.2)(-0.2,0.3)(0,0.5)}
  \rput(2.5,1){\psbezier(0,0)(0.2,0.2)(-0.2,0.3)(0,0.5)}
  \rput(3.0,1){\psbezier(0,0)(0.2,0.2)(-0.2,0.3)(0,0.5)}
  \rput(3.5,1){\psbezier(0,0)(0.2,0.2)(-0.2,0.3)(0,0.5)}
  \rput(4.0,1){\psbezier(0,0)(0.2,0.2)(-0.2,0.3)(0,0.5)}
  \rput(4.5,1){\psbezier(0,0)(0.2,0.2)(-0.2,0.3)(0,0.5)}
  \rput(5.0,1){\psbezier(0,0)(0.2,0.2)(-0.2,0.3)(0,0.5)}
  % labels
  \rput*(-3,0.5){\psscalebox{1.2}{$\Omega^t$}}
  \rput*(3,-0.5){\psscalebox{1.2}{$\Omega^b$}}
  \rput*(0,0){\psscalebox{1.2}{$\Omega^m$}}
  \psline(6,1.35)(4.5,1.25)
  \rput*(6,1.25){\psscalebox{1.2}{$\Omega^t_\text{inter}$}}
  \psline(-6,-1.15)(-4.5,-1.25)
  \rput*(-6,-1.25){\psscalebox{1.2}{$\Omega^b_\text{inter}$}}
  \rput[l](-1.3,2.70){\psscalebox{0.8}{$\gamma = \{m_2^*=0\}$}}
  \psline(-1.1,2.55)(-1.4,1.8)
\end{pspicture}
\caption{The way in which the three solutions $\psi^t_\theta$, $\psi^m_\theta$ and $\psi^b_\theta$ are combined in the case $\sigma=1$.}
\label{fig:combsolutions}
\end{figure}
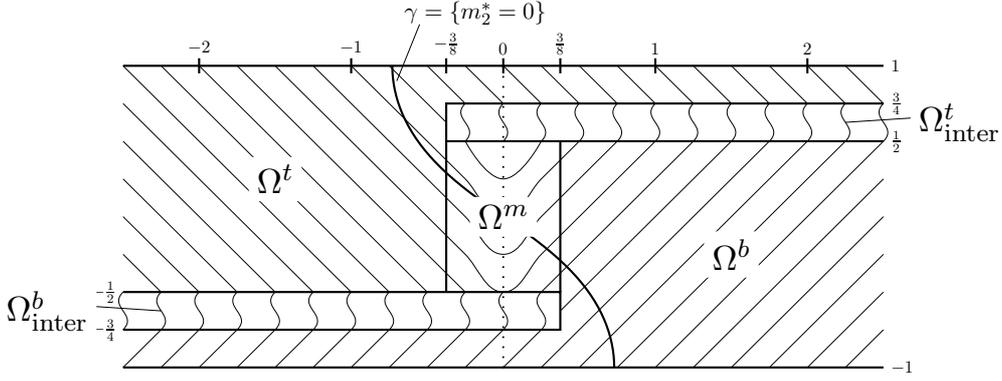

In order to prove the energy estimate \eqref{eq:upperbd}, we will show that each of the approximate solutions $\psi_\theta^t$, $\psi_\theta^b$, $\psi_\theta^m$ and therefore also the blended function $\psi_\theta$ agrees with
\begin{align}\label{eq:deftildepsi}
\tilde{\psi}_\theta(x_1,x_3) = - \int_{-1}^{x_3} \Bigl(\cos\theta + \sin^2\theta \,\tfrac{1-(m^*_2)^2}{2}(x_1,s)\Bigr) \, ds
\end{align}
up to an error of order $\co(\theta^4)$ that is exponentially decaying as $\lvert x_1 \rvert\to \infty$. We remark that it is natural to consider $\tilde{\psi}_\theta$ since by Proposition~\ref{prop:compactnessasym} we expect to have $\partial_3 \psi_\theta = -m_{1,\theta} \approx -\bigl(\cos\theta+\sin^2\theta\,\frac{1-(m_2^*)^2}{2}\bigr)$.

\subsubsection*{Method of characteristics}
Our main tool for constructing solutions to the Hamilton-Jacobi equation \eqref{eq:eik} and proving $\psi_\theta=\tilde{\psi}_\theta+\co(\theta^4)$ is the method of characteristics. Recall (see e.g. \cite{evanspde10,lions82}) that given a Hamiltonian $H_\theta\colon\R^2\times \Omega\to \R$ the characteristics $(p,x) \colon I_\text{max}\subset\R \to \R^2\times\overline{\Omega}$ are defined as solutions of the Hamiltonian system $\dot{p}=-\nabla_{x} H_\theta(p, x)$ and $\dot{x}=\nabla_{p} H_\theta(p, x)$ for some initial data $(p_0, x_0)$ at time $s=0\in I_\text{max}$ with $H_\theta(p_0, x_0)=0$ (so that $H_\theta(p(s), x(s))=0$ for all times $s$ of existence).

In the case of \eqref{eq:eik}, i.e. for
\begin{align*}
H_\theta(p, x)=\lvert p \rvert^2 - \Bigl(1-\sin^2\theta \,(m^*_2)^2(x)\Bigr),
\end{align*}
choosing $\theta>0$ small enough so that $1-\sin^2\theta \, (m^*_2)^2(x)>\frac{1}{2}$, we obtain the following characteristic equations:
\begin{gather}\label{eq:eikchar}
\begin{aligned}
\dot{x} &= 2p,\\
\dot{p} &= -\sin^2\theta\, \nabla(m^*_2)^2(x) =: F(x).
\end{aligned}
\end{gather}
Observe that also $\tilde{\psi}_\theta$ satisfies \eqref{eq:eik} up to an error of order $\co(\theta^4)$:
\begin{align}\label{alta_con}
\lvert \nabla \tilde{\psi}_\theta \rvert^2=1-\sin^2\theta \, (m_2^*)^2+\co(\theta^4).
\end{align}
Finally, we cast property \eqref{alta_con} in the form of a Hamilton-Jacobi equation for the Hamiltonian
\begin{align*}
\tilde{H}_\theta(\tilde{p}, \tilde{x})=\lvert \tilde{p} \rvert^2 - \lvert \nabla \tilde{\psi}_\theta \rvert^2.
\end{align*}
The associated characteristic equations read
\begin{align}\label{eq:eikchartilde}
\begin{aligned}
\dot{\tilde{x}} &= 2\tilde{p},\\
\dot{\tilde{p}} &= \nabla \lvert \nabla \tilde{\psi}_\theta \rvert^2(\tilde{x}) =: \tilde{F}(\tilde{x}).
\end{aligned}
\end{align}
Observe that \eqref{eq:eikchar} and \eqref{eq:eikchartilde} are the same, up to an error $\lvert F(x) - \tilde{F}(x) \rvert$ of order $\theta^4$, which decays exponentially as $\lvert x \rvert \to \infty$.

In Lemmas~\ref{lem:propeik1}~and~\ref{lem:propeik2} in the Appendix we have collected properties of the solutions of \eqref{eq:eikchar} and \eqref{eq:eikchartilde} which will be used in the sequel. In particular, we prove existence and uniqueness, a growth estimate and a stability estimate under perturbations of the initial data. Furthermore, we prove that the characteristics cover the whole domain.

\subsection{Proof of Proposition~\ref{prop:upperbd}}
To simplify notation, we will omit the index $\theta$ from here on.

\textbf{Step 1:} \textit{A simple perturbation result.}

Let $(p,x)$ and $(\tilde{p},\tilde{x})$ be solutions of \eqref{eq:eikchar} and \eqref{eq:eikchartilde} with respect to the initial conditions $(p_0,x_0)$ and $(\tilde{p}_0=\nabla\tilde{\psi}(\tilde{x}_0),\tilde{x}_0)$ at time $s=0$, respectively. Suppose that $\lvert x_0 - \tilde{x}_0 \rvert \leq 1$ and $\lvert p_0 - \tilde{p}_0\rvert \leq C \sin^2\theta<\tfrac{1}{2}$. Then, by the fundamental theorem of calculus, we obtain the following estimates:
\begin{align*}
\MoveEqLeft \lvert p(r)-\tilde{p}(r) \rvert \leq \lvert p_0 - \tilde{p}_0 \rvert + \biggl\lvert \int_0^r \underbrace{\dot{p}(s) - \dot{\tilde{p}}(s)}_{\mathclap{= F(x(s))-F(\tilde{x}(s))+F(\tilde{x}(s)) - \tilde{F}(\tilde{x}(s))}} \, ds \biggr\rvert\\
&\leq \lvert p_0 - \tilde{p}_0 \rvert + C\bigl( \lVert \nabla F \rVert_{\infty,\text{loc}} \sup_{s\in[0,r]} \lvert x(s) - \tilde{x}(s) \rvert + \lVert F-\tilde{F} \rVert_{\infty,\text{loc}}\bigr),
\end{align*}
as well as
\begin{align*}
\MoveEqLeft \lvert x(r)-\tilde{x}(r) \rvert \leq \lvert x_0 - \tilde{x}_0 \rvert + \biggl\lvert \int_0^r \underbrace{\dot{x}(s) - \dot{\tilde{x}}(s)}_{\mathclap{=2p(s)-2\tilde{p}(s)}} \, ds \biggr\rvert \leq \lvert x_0 - \tilde{x}_0 \rvert + 2\lvert r \rvert \sup_{s\in[0,r]} \lvert p(s) - \tilde{p}(s) \rvert\\
&\leq \lvert x_0 - \tilde{x}_0 \rvert + C\bigl( \lvert p_0 - \tilde{p}_0 \rvert + \lVert \nabla F \rVert_{\infty,\text{loc}} \sup_{s\in[0,r]} \lvert x(s) - \tilde{x}(s) \rvert + \lVert F-\tilde{F} \rVert_{\infty,\text{loc}} \bigr),
\end{align*}
where we denote by $\lVert \cdot \rVert_{\infty,\text{loc}}$ the $L^\infty$ norm of a function, taken over a sufficiently large but bounded box that contains both trajectories $x$ and $\tilde{x}$.

Since $\lVert \nabla F \rVert_{\infty} \leq C\sin^2\theta\to 0$ as $\theta \tod 0$, we can absorb the third term on the right hand side of the second estimate into the left hand side, so that we obtain
\begin{align*}
\sup_{r \in [0,s]}\lvert x(r)-\tilde{x}(r) \rvert \leq C \bigl( \lvert x_0-\tilde{x}_0 \rvert +  \lvert p_0 - \tilde{p}_0 \rvert +  \lVert F-\tilde{F} \rVert_{\infty,\text{loc}} \bigr),
\end{align*}
and thus also
\begin{align*}
\sup_{r \in [0,s]} \lvert p(r) - \tilde{p}(r) \rvert \leq C\bigl( \lvert p_0 - \tilde{p}_0 \rvert + \lVert F - \tilde{F} \rVert_{\infty,\text{loc}} + \sin^2\theta \lvert x_0 - \tilde{x}_0\rvert\bigr).
\end{align*}
Now additionally suppose that $x$ and $\tilde{x}$ intersect at some point $\bar{x}=x(s)=\tilde{x}(\tilde{s})\in\overline{\Omega}$, and that the initial values $x_0$ and $\tilde{x}_0$ are taken from the graph of a smooth function $f \colon [a,b] \to [-1,1]$, so that we have the estimate $\lvert x_{0,3}- \tilde{x}_{0,3} \rvert \leq \lVert f' \rVert_{\infty} \lvert x_{0,1} - \tilde{x}_{0,1}\rvert$. We prove that $x_0$ and $\tilde{x}_0$ cannot differ much and therefore the previous estimates apply.

Note that $s$ and $\tilde{s}$ need to have the same sign. W.l.o.g. we may assume $0 < s \leq \tilde{s}$. Thus, we have
\begin{align*}
\MoveEqLeft \lvert \tilde{s} - s \rvert \leq 2 \int_s^{\tilde{s}} \underbrace{-\tilde{p}_3(r)}_{\geq \frac{1}{2}} \, dr = \underbrace{\int_0^s \dot{x}_3(r) \, dr - \int_0^{\tilde{s}} \dot{\tilde{x}}_3(r) \, dr}_{= x_{0,3}-\tilde{x}_{0,3}} - 2\int_0^s p_3(r) - \tilde{p}_3(r) \, dr\\
&\leq C\bigl( \lvert x_0 - \tilde{x}_0 \rvert + \lvert p_0 - \tilde{p}_0 \rvert + \lVert F - \tilde{F} \rVert_{\infty,\text{loc}} \bigr).
\end{align*}
This yields
\begin{align*}
\MoveEqLeft \underbrace{\lvert x_{0,3} - \tilde{x}_{0,3} \rvert}_{\leq \lVert f' \rVert_\infty \lvert x_{0,1}-\tilde{x}_{0,1}\rvert} +\quad \lvert \underbrace{x_{0,1} - \tilde{x}_{0,1}}_{\mathclap{= x_{0,1} - \bar{x}_1 + \bar{x}_1 - \tilde{x}_{0,1}}} \rvert \leq (1+\lVert f' \rVert_{\infty} )\biggl\lvert \int_0^{\tilde{s}} 2\tilde{p}_1(r) \, dr - \int_0^s 2p_1(r) \, dr \biggr\rvert\\
&\leq C \sup_{r \in [0,s]} \lvert p_1(r) - \tilde{p}_1(r) \rvert + C \underbrace{\sup_{r \in [0,s]} \max\bigl( \lvert p_1(r) \rvert,\lvert \tilde{p}_1(r) \rvert \bigr)}_{\leq C \sin^2\theta} \, \lvert \tilde{s} - s \rvert\\
&\leq C \bigl( \lvert p_0 - \tilde{p}_0 \rvert + \lVert F-\tilde{F} \rVert_{\infty,\text{loc}} + \sin^2\theta \, ( \lvert \tilde{x}_{0,3} - x_{0,3} \rvert + \lvert \tilde{x}_{0,1} - x_{0,1} \rvert)\bigr),
\end{align*}
so that (again absorbing the small term $C\sin^2\theta\,(\lvert \tilde{x}_{0,3} - x_{0,3} \rvert + \lvert \tilde{x}_{0,1} - x_{0,1} \rvert )$ into the left hand side)
\begin{align*}
\lvert x_0 - \tilde{x}_0 \rvert \leq C\bigl( \lvert x_{0,1} - \tilde{x}_{0,1}\rvert + \lvert x_{0,3} - \tilde{x}_{0,3}\rvert \bigr)\leq C \bigl( \lvert p_0 - \tilde{p}_0 \rvert + \lVert F-\tilde{F} \rVert_{\infty,\text{loc}} \bigr).
\end{align*}
This improves our estimates to
\begin{align*}
\MoveEqLeft \lvert x_0 - \tilde{x}_0 \rvert + \lvert s - \tilde{s} \rvert + \sup_{0\leq r \leq s}\lvert p(r) - \tilde{p}(r) \rvert + \sup_{0\leq r \leq s} \lvert x(r) - \tilde{x}(r) \rvert\\
&\leq C \bigl( \lvert p_0 - \tilde{p}_0 \rvert + \lVert F-\tilde{F} \rVert_{\infty,\text{loc}} \bigr),
\end{align*}
and we may deduce for a solution $\psi$ of \eqref{eq:eik}, using $\psi(\bar{x})=\psi(x_0)+\int_0^s \tfrac{d}{dt}\psi(x(t)) dt$ and the equivalent for $\tilde{\psi}$,
\begin{align*}
\MoveEqLeft \lvert \psi(\bar{x}) - \tilde{\psi}(\bar{x}) \rvert \leq \lvert \psi(x_0) - \tilde{\psi}(\tilde{x}_0) \rvert + 2\biggl\lvert\int_0^s \lvert p(s)\rvert^2 ds - \int_0^{\tilde{s}} \lvert \tilde{p}(s) \rvert^2 ds \biggr\rvert\\
&\leq \lvert \psi(x_0) - \tilde{\psi}(\tilde{x}_0) \rvert + C\bigl( \lvert p_0 - \tilde{p}_0 \rvert + \lVert F-\tilde{F} \rVert_{\infty,\text{loc}} \bigr)
\end{align*}
and
\begin{align*}
\MoveEqLeft \underbrace{\lvert \nabla \psi(\bar{x}) - \nabla \tilde{\psi}(\bar{x}) \rvert}_{\mathclap{\leq \lvert p(s)-\tilde{p}(s)\rvert + \lvert\tilde{p}(s)-\tilde{p}(\tilde{s})\rvert}} \leq C\bigl( \lvert p_0 - \tilde{p}_0 \rvert + \lVert F-\tilde{F} \rVert_{\infty,\text{loc}} \bigr) + \biggl\lvert \int_{s}^{\tilde{s}} \tilde{F}(x(s)) \, ds \biggr\rvert\\
&\leq C\bigl( \lvert p_0 - \tilde{p}_0 \rvert + \lVert F-\tilde{F} \rVert_{\infty,\text{loc}} \bigr).
\end{align*}

\textbf{Step 2:} \textit{Construction of $\psi^b$ and $\psi^t$.}

For $\theta\ll 1$, we consider the following admissible initial data of the Hamilton-Jacobi equation \eqref{eq:eik}:
  \begin{align*}
  (x^t_0,z^t_0,p^t_0) &= \bigl((x^t_{0,1},1), -2\cos\theta, -\sqrt{1-\sin^2\theta \,(m^*_2)^2(x^t_{0,1},1)}\, {\bf e}_3\bigr)\in \partial\Omega\times\R\times\R^2
  \intertext{and}
  (x^b_0,z^b_0,p^b_0) &= \bigl((x^b_{0,1},-1), 0, -\sqrt{1-\sin^2\theta \,(m^*_2)^2(x^b_{0,1},-1)}\, {\bf e}_3\bigr)\in \partial\Omega\times\R\times\R^2.
  \end{align*}
 Thus, by the method of characteristics (cf. Appendix), there exist smooth functions $\psi^b$ and $\psi^t$
that satisfy $$\textrm{$\nabla \psi^b(x^b(s)) = p^b(s)$ and $\nabla \psi^t(x^t(s))=p^t(s)$, respectively,}$$ and solve \eqref{eq:eik} subject to the one-sided constraints $\psi^b = \tilde{\psi}=0$ on $\{x_3=-1\}$ and $\psi^t = \tilde{\psi}=-2\cos\theta$ on $\{x_3=1\}$. 

Observe that $\lvert p_0^t - \tilde{p}_0 \rvert \leq C\sin^2\theta \lvert x_0^t - \tilde{x}_0 \rvert$, so that Step~1 for $f \equiv 1$ immediately yields $\lvert x_0^t - \tilde{x}_0 \rvert \leq C \lVert F-\tilde{F} \rVert_{\infty,\text{loc}}$ and thus also
\begin{align*}
\lvert \psi^t(\bar{x})-\tilde{\psi}(\bar{x}) \rvert + \lvert \nabla \psi^t(\bar{x}) - \nabla \tilde{\psi}(\bar{x}) \rvert \leq C\lVert F-\tilde{F} \rVert_{\infty,\text{loc}},   \quad \forall \bar{x}\in \Omega.
\end{align*}
The same inequality holds for $\psi^b$, i.e.,
\begin{align*}
\lvert \psi^b(\bar{x})-\tilde{\psi}(\bar{x}) \rvert + \lvert \nabla \psi^b(\bar{x}) - \nabla \tilde{\psi}(\bar{x}) \rvert \leq C\lVert F-\tilde{F} \rVert_{\infty,\text{loc}},   \quad \forall\bar{x}\in \Omega.
\end{align*}

\textbf{Step 3:} \textit{Definition of $\psi^m$.}

Now, we focus on the region around the curve $\gamma$ of zeros of $m^*_2$, where we interpolate $\psi^t$ with $\psi^b$ by means of another stream function $\psi^m$ that will satisfy the same equation \eqref{eq:eik}. To define $\psi^m$, we will impose initial conditions on the curve $\gamma$ that interpolate between $\nabla\psi^t$ and $\nabla\psi^b$.

To this end, we may assume that $\gamma$ denotes an arc-length parametrization of the curve of zeros of $m^*_2$, i.e., $\lvert \dot{\gamma} \rvert = 1$. Moreover, we can choose $\gamma$ symmetric w.r.t. $\gamma(0)=0$. Denote by $a>0$ the value at which $\gamma_1(a) = -\gamma_1(-a) = \tfrac{1}{2}$. Obviously, $a>\frac{1}{2}$. We also remark that on $[-a,a]$ the curve $\gamma$ is in fact the graph of a smooth function.

Let $\alpha\colon (-\tfrac{1}{2},\tfrac{1}{2}) \to [0,1]$ be a smooth cutoff function with $\alpha(s)=1$ for $s\leq-\tfrac{1}{4}$ and $\alpha(s)=0$ for $s\geq\tfrac{1}{4}$. 
\begin{figure}
  \centering
  \begin{pspicture}(-2.25,-0.75)(2.25,2.25)
    \psset{unit=1.5}
    % axes
    \psline{->}(-2.4,0)(2.4,0)
    \psline{->}(0,-0.2)(0,1.2)
    \psline{-}(-2,-0.1)(-2,0.1)
    \psline{-}(-1,-0.1)(-1,0.1)
    \psline{-}(1,-0.1)(1,0.1)
    \psline{-}(2,-0.1)(2,0.1)
    \psline{-}(-0.1,1)(0.1,1)
    % curve
    \psline{-}(-2,1)(-1,1)
    \psbezier{-}(-1,1)(0,1)(0,0)(1,0)
    \psline{-}(1,0)(2,0)
    % labels
    \rput[t](-2,-0.15){\psscalebox{0.7}{$-\tfrac{1}{2}$}}
    \rput[t](-1,-0.15){\psscalebox{0.7}{$-\tfrac{1}{4}$}}
    \rput[t](1,-0.15){\psscalebox{0.7}{$\tfrac{1}{4}$}}
    \rput[t](2,-0.15){\psscalebox{0.7}{$\tfrac{1}{2}$}}
    \rput[l](0.2,1){\psscalebox{0.7}{$1$}}
    \rput[b](0,1.25){\psscalebox{0.7}{$\alpha$}}
    \rput[l](2.5,0){\psscalebox{0.7}{$x_1$}}
  \end{pspicture}
  \caption{The cutoff function $\alpha$.}
\end{figure}
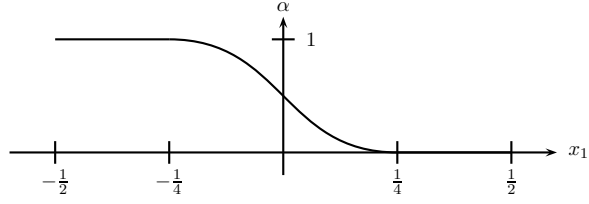
Then we define $h \colon (-\tfrac{1}{2},\tfrac{1}{2}) \to \R$ via
\begin{align*}
  \psi^m\bigl(\gamma(s)\bigr):=h(s) = \alpha(s) \psi^t\bigl(\gamma(s)\bigr) + \bigl(1-\alpha(s)\bigr) \psi^b\bigl(\gamma(s)\bigr)=\tilde{\psi}\bigl(\gamma(s)\bigr) + \co(\lVert F-\tilde{F}\rVert_{\infty,\text{loc}}).
\end{align*}
Given $s\in(-\tfrac{1}{2},\tfrac{1}{2})$ we let
\begin{align*}
  x_0^m := \gamma(s),\\
  z_0^m := h(s),
\end{align*}
while $p_0^m$ is uniquely determined by
\begin{align*}
  p_0^m \cdot \dot{\gamma}(s) &:= \dot{h}(s) = \nabla \tilde{\psi} \bigl(\gamma(s)\bigr) \cdot \dot{\gamma}(s) + \co(\lVert F-\tilde{F} \rVert_{\infty,\text{loc}}),\\
  p_0^m \cdot \dot{\gamma}^\perp(s) &:= -\sqrt{1-\lvert \dot{h}(s) \rvert^2} = \nabla \tilde{\psi} \bigl(\gamma(s)\bigr) \cdot \dot{\gamma}^\perp(s) + \co(\lVert F-\tilde{F} \rVert_{\infty,\text{loc}})
   \end{align*}
Clearly, $(x_0^m,z_0^m,p_0^m)$ are admissible initial data such that the method of characteristics yields a smooth function $\psi^m \colon [-\tfrac{3}{8},\tfrac{3}{8}] \times [-1,1]\to \R$ that coincides with $\psi^t$ in a neighbourhood of $\{-\tfrac{3}{8}\} \times [-1,1]$ and with $\psi^b$ in a neighbourhood of $\{\tfrac{3}{8}\} \times [-1,1]$, and solves \eqref{eq:eik} with the boundary conditions $(z_0^m,p_0^m)$ on $\gamma$.

In particular, using the notation from Step~1, we have $\lvert p_0^m - \tilde{p}_0 \rvert \leq C \lVert F-\tilde{F} \rVert_{\infty,\text{loc}}$, so that Step~1 yields
\begin{align*}
\lvert \psi^m(\bar{x}) - \tilde{\psi}(\bar{x}) \rvert + \lvert \nabla \psi^m(\bar{x}) - \nabla\tilde{\psi}(\bar{x}) \rvert \leq C \lVert F-\tilde{F} \rVert_{\infty,\text{loc}}\quad \forall \bar{x} \in [-\tfrac{3}{8},\tfrac{3}{8}]\times[-1,1].
\end{align*}

\textbf{Step 4:} \textit{Construction of a smooth stream function $\psi \colon \Omega \to\R$ such that
    \begin{itemize}
      \item $\psi$ satisfies \eqref{eq:bcb} and \eqref{eq:bct} on $\partial \Omega$,
      \item $\lvert \nabla \psi \rvert \leq 1$ and $\lvert \nabla \psi \rvert^2 = 1-\sin^2\theta \,(m^*_2)^2$ in a neighborhood $N\subset\Omega$ of $\gamma$,
      \item $\lvert \psi(x) - \tilde{\psi}(x) \rvert + \lvert \nabla\psi(x) - \nabla\tilde{\psi}(x) \rvert\leq C \lVert F - \tilde{F} \rVert_{\infty,\text{loc}}$ for all $x \in \Omega$.
    \end{itemize}
}

  Here we blend the functions $\psi^t$, $\psi^b$ and $\psi^m$ from Steps 2 and 3 as indicated in Figure~\ref{fig:combsolutions}.
  Let $\eta \colon [-1,1] \to [0,1]$ be a smooth cutoff function with $\eta(x_3)=1$ for $x_3 \geq \tfrac{3}{4}$ and $\eta(x_3)=0$ if $x_3\leq\tfrac{1}{2}$. We define
  \begin{align*}
    \psi(x) &=
    \begin{cases}
      \psi^t(x), &\text{if } x \in \Omega^t,\\
      \psi^m(x), &\text{if } x \in \Omega^m,\\
      \psi^b(x), &\text{if } x \in \Omega^b,\\
      \eta(x_3) \psi^t(x) + \bigl(1-\eta(x_3)\bigr) \psi^m(x), &\text{if } x \in \Omega^t_\text{inter} \text{ and } |x_1| \leq \tfrac{3}{8},\\
      \eta(x_3) \psi^t(x) + \bigl(1-\eta(x_3)\bigr) \psi^b(x), &\text{if } x \in \Omega^t_\text{inter} \text{ and } x_1 \geq \tfrac{3}{8},\\
      \eta(-x_3) \psi^b(x) + \bigl(1-\eta(-x_3)\bigr) \psi^m(x), &\text{if } x \in \Omega^b_\text{inter} \text{ and } |x_1| \leq \tfrac{3}{8},\\
      \eta(-x_3) \psi^b(x) + \bigl(1-\eta(-x_3)\bigr) \psi^t(x), &\text{if } x \in \Omega^b_\text{inter} \text{ and } x_1 \leq -\tfrac{3}{8}.\\
    \end{cases}\\
    &= \tilde{\psi}(x) + \co(\lVert F - \tilde{F} \rVert_{\infty,\text{loc}}).
  \end{align*}
  Observe that the resulting function is smooth, since $\psi^t$, $\psi^m$, $\psi^b$ and $\eta$ are, and since $\psi^m=\psi^t$ and $\psi^m=\psi^b$ in neighbourhoods of $\{-\tfrac{3}{8}\}\times[-1,1]$ and $\{\tfrac{3}{8}\}\times[-1,1]$, respectively. Furthermore, by definition, $\psi$ satisfies \eqref{eq:bcb} and \eqref{eq:bct} on $\partial \Omega$ and $\lvert \nabla \psi \rvert^2 = 1 - \sin^2\theta \, (\mds)^2 \leq 1$ on $\Omega^t \cup \Omega^b \cup \Omega^m$. In particular $\lvert \nabla \psi \rvert^2 = 1$ on $\gamma$. On $\Omega^t_\text{inter} \cup \Omega^b_\text{inter}$ we can estimate
  \begin{align*}
    \lvert \nabla \psi \rvert &\leq \eta \underbrace{\lvert \nabla \hat{\psi} \rvert}_{\mathclap{= \sqrt{1-\sin^2\theta \, (\mds)^2}}} \; + \; (1-\eta) \underbrace{\lvert \nabla \check{\psi} \rvert}_{\mathclap{= \sqrt{1-\sin^2\theta \, (\mds)^2}}} \; + \; \lvert \partial_3 \eta \rvert \underbrace{\lvert \hat{\psi} - \check{\psi} \rvert}_{\mathclap{\leq C\lVert F-\tilde{F} \rVert_{\infty,\text{loc}}}}\leq \sqrt{1-\sin^2\theta \, (\mds)^2} + C\lVert F - \tilde{F} \rVert_{\infty,\text{loc}} < 1
  \end{align*}
  for $\hat{\psi}, \check{\psi} \in \{\psi^b, \psi^t, \psi^m\}$ suitably chosen and $\theta\ll 1$, since $(\mds)^2\geq C>0$ for every $x \in \Omega^t_\text{inter}\cup\Omega^b_\text{inter}$.

{\bf Step 5:} {\it Conclusion.} 

First, observe that
\begin{gather*}
\lvert \nabla \psi \rvert^2 = 1-\sin^2\theta \, (m^*_2)^2 + 
\begin{cases}
\co(\lVert F-\tilde{F} \rVert_{\infty,\text{loc}}), &\text{on }\Omega^t_\text{inter}\cup\Omega^b_\text{inter},\\ 
0, &\text{otherwise}.
\end{cases}
\end{gather*}
We define
\begin{align*}
  m'(x) = \nabla^\perp \psi(x), \qquad m_{2}(x) = s(x) \sqrt{1-\lvert m'(x) \rvert^2},
\end{align*}
where $s(x)=-1$ if $x$ is left of $\gamma$ and $s(x)=1$ else, to obtain a magnetization that is admissible in $E_\text{asym}(\theta)$.

This yields
\begin{align*}
  s(x) \sqrt{1-\lvert m'(x) \rvert^2}= \sin\theta \, m^*_2(x)
\end{align*}
for $x \in N$ and 
\begin{align*}
  s(x) \sqrt{1-\lvert m'(x) \rvert^2}= \sin\theta \, m^*_2(x) \sqrt{1+\co\Bigl(\tfrac{\lVert F-\tilde{F} \rVert_{\infty,\text{loc}}}{\sin^2\theta}\Bigr)} = \sin\theta \, m^*_2(x) + \co\Bigl(\tfrac{\lVert F-\tilde{F} \rVert_{\infty,\text{loc}}}{\sin\theta}\Bigr)
\end{align*}
for $x\not\in N$, since $\lvert m^*_2 \rvert \geq C>0$  in $\Omega\setminus N$.

  We claim that the smooth $m=m_\theta$ defined above generates a suitable recovery family as $\theta\tod 0$. Indeed, it is easy to see that $m\in X_0\cap\Xt$, and using the expansion of $\psi$ given in Step~4 we obtain (by \eqref{eq:defhatm})
  \begin{align*}
    m = \left( \begin{smallmatrix} \cos\theta\\\sin\theta \,m^*_2\\0\end{smallmatrix}\right) + \sin^2\theta \left( \begin{smallmatrix} \hat{m}_1 + \sin^2\theta \, \phi_1\\ \sin\theta \,\phi_2\\ \hat{m}_3 + \sin^2\theta \,\phi_3\end{smallmatrix} \right)
  \end{align*}
  with $\phi=(\phi_1,\phi_2,\phi_3) \in C^\infty(\Omega)$ decaying exponentially as $\lvert x \rvert \to \infty$. Thus, by Lemma \ref{lem:calc_average_second}, we have
  \begin{align*}
     \MoveEqLeft \sin^{-4}\theta \int_{\Omega} \Bigl(\lvert \nabla m \rvert^2 -\sin^2\theta \, \lvert \nabla m^*_2 \rvert^2 \Bigr)\, dx\\
     &\stackrel{\eqref{eq:rewrittenenergy}}{=} \int_{\Omega} \Bigl(\lvert \nabla \hat{m}_1 \rvert^2 + \lvert \nabla \hat{m}_3 \rvert^2- \mu(x_1) ( \hat{m}_1^2 + \hat{m}_3^2) \Bigr)\, dx + \co(\theta^2)\\
     &\stackrel{\eqref{eq:defe1}}{=} \tilde{E}_1 + \co(\theta^2),
  \end{align*}
  which ends the proof of Proposition~\ref{prop:upperbd}.\qed

\appendix
\section*{Appendix}

\section{Classical inequalities}
\begin{lem}
The following Poincar\'e(-Wirtinger) inequalities hold with optimal constant:
\begin{align}
\forall f\in \dot{H}^1(\Omega)\colon \int_\Omega \lvert f-\bar{f} \rvert^2 dx &\leq \tfrac{4}{\pi^2} \int_\Omega \lvert\partial_3 f \rvert^2 dx,\label{eq:poincm1}\\ 
\forall f\in H^1_0(\Omega)\colon \qquad \int_\Omega f^2 \, dx &\leq \tfrac{4}{\pi^2} \int_\Omega \lvert\partial_3 f \rvert^2 dx.\label{eq:poincm3}
\end{align}
\end{lem}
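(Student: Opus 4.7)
The plan is to reduce both inequalities to the one-dimensional case by Fubini, since $\partial_3$ only involves the transverse variable $x_3\in(-1,1)$. For each $x_1\in\R$, the function $g(x_3):=f(x_1,x_3)$ belongs (for a.e.\ $x_1$) to $H^1(-1,1)$ for the first inequality, with zero mean $\int_{-1}^1 g\,dx_3 = 2\bar f(x_1)\cdot \tfrac{1}{2}\cdot\ldots$ well, $g-\bar g$ has zero mean, and to $H^1_0(-1,1)$ for the second. I would then quote the sharp one-dimensional Poincaré–Wirtinger inequality on $(-1,1)$: for $g\in H^1(-1,1)$ with $\int_{-1}^1 g\,dx_3=0$,
\[
  \int_{-1}^1 g^2\,dx_3 \;\leq\; \tfrac{4}{\pi^2}\int_{-1}^1 (g')^2\,dx_3,
\]
and the analogous statement for $g\in H^1_0(-1,1)$. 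Integrating over $x_1\in\R$ and invoking Tonelli yields both \eqref{eq:poincm1} and \eqref{eq:poincm3}. A small remark is needed for \eqref{eq:poincm1}: an element $f\in\dot H^1(\Omega)$ need not be in $L^2(\Omega)$, but the slicewise application above only requires that $f(x_1,\cdot)-\bar f(x_1)\in H^1(-1,1)$ for a.e.\ $x_1$, which follows from $\partial_3 f\in L^2(\Omega)$ and the fact that $\bar f(x_1)$ is the mean over the slice.

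For the one-dimensional inequalities I would give a self-contained Fourier-series proof, which also makes the sharp constant transparent. Expand $g\in L^2(-1,1)$ in the cosine basis $\{1\}\cup\{\cos(\tfrac{\pi}{2}n(x_3+1))\}_{n\geq 1}$ (Neumann eigenfunctions on $(-1,1)$, with eigenvalues $(\tfrac{\pi}{2}n)^2$): if $g$ has zero mean, the constant mode vanishes and Parseval's identity gives
\[
  \int_{-1}^1 (g')^2\,dx_3 \;=\; \sum_{n\geq 1}\bigl(\tfrac{\pi}{2}n\bigr)^2 a_n^2 \;\geq\; \tfrac{\pi^2}{4}\sum_{n\geq 1} a_n^2 \;=\; \tfrac{\pi^2}{4}\int_{-1}^1 g^2\,dx_3.
\]
The Dirichlet case is analogous, using the sine basis $\{\sin(\tfrac{\pi}{2}n(x_3+1))\}_{n\geq 1}$ whose smallest eigenvalue is again $\tfrac{\pi^2}{4}$, realized by $\cos(\tfrac{\pi}{2}x_3)$.

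For sharpness of the constant $\tfrac{4}{\pi^2}$, I would exhibit a (near-)saturating family. For \eqref{eq:poincm1} take $f_R(x_1,x_3):=\chi_R(x_1)\sin(\tfrac{\pi}{2}x_3)$ with $\chi_R$ a smooth cutoff that is $1$ on $[-R,R]$ and supported in $[-R-1,R+1]$; then $\bar f_R\equiv 0$ and a direct computation gives $\int f_R^2\,dx / \int(\partial_3 f_R)^2\,dx \to \tfrac{4}{\pi^2}$ as $R\to\infty$ (in fact the ratio already equals $\tfrac{4}{\pi^2}$ up to boundary terms from $\partial_1\chi_R$, which are negligible). For \eqref{eq:poincm3} the same argument works with $\cos(\tfrac{\pi}{2}x_3)\in H^1_0$ in place of the sine.

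I do not foresee any real obstacle: the proof is essentially a Fubini reduction to a textbook one-dimensional spectral fact. The only minor issue worth flagging in the write-up is the homogeneous-space subtlety in \eqref{eq:poincm1} mentioned above, namely that the inequality is to be read as a statement about the well-defined object $f-\bar f\in L^2(\Omega)$ rather than about $f$ itself.
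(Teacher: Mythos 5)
Your proof is correct. The paper does not actually prove this lemma: it simply remarks that it follows as a corollary of a textbook Poincar\'e--Wirtinger inequality (it cites \cite[Theorem~4.24]{dacorogna08}), so your write-up is a self-contained version of precisely the standard argument behind that citation: Fubini reduction to slices $\{x_1\}\times(-1,1)$, the sharp one-dimensional Neumann (zero-mean) and Dirichlet inequalities on an interval of length $2$ with first nonzero eigenvalue $\tfrac{\pi^2}{4}$, and integration in $x_1$. Your handling of the $\dot H^1$ subtlety is the right one: for a.e.\ $x_1$ the slice $f(x_1,\cdot)$ lies in $H^1(-1,1)$, and the inequality is a statement about $f-\bar f$, both sides being controlled by $\partial_3 f\in L^2(\Omega)$. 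Two cosmetic points for the write-up: the sentence introducing the zero-mean condition is garbled (just say you apply the 1D inequality to $g-\bar g$), and in the sharpness argument no ``boundary terms from $\partial_1\chi_R$'' arise at all, because the right-hand side of \eqref{eq:poincm1} involves only $\partial_3 f$; hence $f_R(x_1,x_3)=\chi_R(x_1)\sin(\tfrac{\pi}{2}x_3)$ attains the ratio $\tfrac{4}{\pi^2}$ exactly (and likewise $\chi_R(x_1)\cos(\tfrac{\pi}{2}x_3)$ for \eqref{eq:poincm3}), which makes optimality immediate; the cutoff is needed only to ensure membership in $\dot H^1(\Omega)$, respectively $H^1_0(\Omega)$.
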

In the context of this paper, we typically apply the first inequality to ``first components'', i.e. $m_{1,\theta}$ with $\bar{m}_{1,\theta}\equiv \cos\theta$ or $\hat{m}_{1,\theta}$ with $\bar{\hat{m}}_{1,\theta} = 0$. The second inequality is applied to ``third components'', i.e. $m_{3,\theta}$ or $\hat{m}_{3,\theta}$, which satisfy $m_{3,\theta}(\pm\infty,\cdot)=0$ as well as $m_{3,\theta}=0$ on $\partial\Omega$.

The above lemma can be obtained as a corollary e.g. of \cite[Theorem~4.24]{dacorogna08}.

Combining the classical Hardy inequality \cite[pg.~3]{hardy20} with \eqref{eq:poincm1}, one obtains
\medskip
\begin{lem}
There exists a constant $0<C<\infty$, such that we have:
\begin{align}
  \int_\Omega \bigl\lvert f-\bar{f}(0) \bigr\rvert^2 \, \tfrac{1}{1+x_1^2}\,dx &\leq C \int_\Omega \lvert \nabla f \rvert^2 dx\quad\text{for any }f\in\dot{H}^1(\Omega).\label{eq:hardy}
\end{align}
\end{lem}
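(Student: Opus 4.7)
The plan is to split $f-\bar{f}(0)$ into the vertical fluctuation $f-\bar{f}(x_1)$ and the horizontal fluctuation $\bar{f}(x_1)-\bar{f}(0)$, and to handle the two contributions by \eqref{eq:poincm1} and a one-dimensional Hardy inequality, respectively. Since $(a+b)^2\leq 2a^2+2b^2$, it suffices to bound
\begin{equation*}
I_1:=\int_\Omega \bigl\lvert f-\bar{f}(x_1)\bigr\rvert^2 \tfrac{1}{1+x_1^2}\,dx\quad\text{and}\quad I_2:=\int_\Omega\bigl\lvert\bar{f}(x_1)-\bar{f}(0)\bigr\rvert^2\tfrac{1}{1+x_1^2}\,dx
\end{equation*}
separately by $C\int_\Omega|\nabla f|^2\,dx$.

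For $I_1$, I would use the trivial pointwise bound $\tfrac{1}{1+x_1^2}\leq 1$ and then invoke the Poincar\'e-Wirtinger inequality \eqref{eq:poincm1} in the $x_3$-direction, yielding
\begin{equation*}
I_1\leq \int_\Omega|f-\bar{f}|^2\,dx\leq \tfrac{4}{\pi^2}\int_\Omega|\partial_3 f|^2\,dx.
\end{equation*}

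For $I_2$, since $\bar{f}(x_1)-\bar{f}(0)$ is independent of $x_3$, the $x_3$-integration gives a factor of $2$, so it remains to prove the one-dimensional weighted inequality
\begin{equation*}
\int_\R \bigl\lvert g(x_1)-g(0)\bigr\rvert^2\tfrac{1}{1+x_1^2}\,dx_1\leq C\int_\R \lvert g'\rvert^2\,dx_1
\end{equation*}
for $g=\bar{f}\in\dot{H}^1(\R)$, and then observe that by Jensen's inequality $\int_\R|\bar{f}'|^2\,dx_1\leq \tfrac{1}{2}\int_\Omega|\partial_1 f|^2\,dx$. To establish the 1D weighted Hardy inequality, I would split $\R=\{|x_1|\leq 1\}\cup\{|x_1|>1\}$: on $\{|x_1|\leq 1\}$ the weight is bounded by $1$ and the Cauchy-Schwarz estimate $|g(x_1)-g(0)|^2\leq |x_1|\int_0^{x_1}|g'|^2\,ds$ directly yields the bound $C\int_\R|g'|^2\,dx_1$; on $\{|x_1|>1\}$ I use $\tfrac{1}{1+x_1^2}\leq \tfrac{1}{x_1^2}$ together with the classical Hardy inequality $\int_0^\infty |g(x)-g(0)|^2 x^{-2}\,dx\leq 4\int_0^\infty |g'|^2\,dx$ applied to each half-line.

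Combining the bounds on $I_1$ and $I_2$ yields the claimed inequality with a constant $C$ that can be tracked explicitly through the argument. The only real subtlety is the 1D Hardy step: the standard form of Hardy's inequality uses the weight $1/x_1^2$, which is singular at the origin, so the splitting into a bounded and an unbounded regime is the natural way to transfer control to the regular weight $1/(1+x_1^2)$; no further obstacles are expected.
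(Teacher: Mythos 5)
Your proof is correct and follows exactly the route the paper intends: the paper's (one-line) argument is precisely "combine the classical Hardy inequality with the Poincar\'e--Wirtinger inequality \eqref{eq:poincm1}", i.e. split off the vertical fluctuation $f-\bar f(x_1)$ and control the averaged part $\bar f(x_1)-\bar f(0)$ by the one-dimensional Hardy inequality, which is what you do. Your handling of the regular weight $1/(1+x_1^2)$ near the origin and the Jensen step $\int_\R|\bar f'|^2\,dx_1\le\tfrac12\int_\Omega|\partial_1 f|^2\,dx$ are exactly the details the paper leaves implicit.
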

Note that we may replace the profile $\frac{1}{1+x_1^2}$ with any continuous function that decays at least quadratically for $\lvert x_1 \rvert \to \infty$ such as $\mu=\mu(x_1)$ as in \eqref{eq:defmu}.

\section{Characteristics of the eikonal equation}\label{sec:chareik}
The following two lemmata show that the method of characteristics can in fact be applied to construct a solution of the modified eikonal equations \eqref{eq:eik} and \eqref{alta_con} on $\Omega$. Lemma~\ref{lem:propeik1} (iii) yields that under suitable assumptions on the initial data characteristics cannot cross, while by Lemma~\ref{lem:propeik2} characteristics cover the whole domain $\Omega$.

\medskip

\begin{lem}\label{lem:propeik1}
  There exists a universal constant $0<C<\infty$ such that for $\theta\leq \tfrac{1}{C}$ and any initial datum $(x_0,p_0)\in\Omega\times\R^2$ with $\lvert p_0 + \mathbf{e}_3 \rvert \leq \tfrac{1}{2}$ there exists a unique solution $(x,p)$ of \eqref{eq:eikchar}, subject to the initial condition $(x,p)(0)=(x_0,p_0)$. The solution depends smoothly on time and initial data and satisfies:
  \begin{enumerate}
    \item Let $[T_\text{t},T_\text{b}]$ be the maximal interval of existence of $(x,p)$. We have $T_\text{t}=\sup\left\{t\leq 0 \with x_3(t)=1 \right\}$, $T_\text{b}=\inf\left\{ t\geq 0 \with x_3(t) = -1 \right\}$, and $\lvert T_\text{t} \rvert + \lvert T_\text{b} \rvert \leq C$.
    \item It holds $\lvert p(t) - p_0 \rvert \leq C\sin^2\theta$ for all $t \in [T_\text{t},T_\text{b}]$.
    \item Given $0<q<1$ there exists a constant $C(q)$ such that the solution $(\tilde{x},\tilde{p})$ of \eqref{eq:eikchar} corresponding to the initial datum $(\tilde{x},\tilde{p})(0)=(\tilde{x}_0,\tilde{p}_0)\in\Omega\times\R^2$ satisfies the estimate
      \begin{align*}
        \MoveEqLeft \tfrac{1}{C(q)} \bigl(\lvert x_0 - \tilde{x}_0 \rvert + \lvert t - \tilde{t} \rvert \bigr) \\
        &\leq \lvert x(t) - \tilde{x}(\tilde{t}) \rvert \leq C(q) \bigl(\lvert x_0 - \tilde{x}_0 \rvert + \lvert t - \tilde{t} \rvert \bigr) \qquad \forall t\in[T_\text{t},T_\text{b}],\,\tilde{t}\in[\tilde{T}_\text{t},\tilde{T}_\text{b}],
      \end{align*}
      provided $\theta \leq \tfrac{1}{C(q)}$, $\lvert p_0-\tilde{p}_0 \rvert \leq \frac{\lvert x_0 - \tilde{x}_0 \rvert}{C(q)}$, and $\lvert (x_0-\tilde{x}_0)\cdot \xi \rvert \leq q^2 \lvert x_0 - \tilde{x}_0 \rvert \lvert \xi \rvert$ for $\xi \in \{p_0,\tilde{p}_0\}$. This last inequality in fact is a lower bound on the angle between $x_0-\tilde{x}_0$ and $\xi\in\{p_0,\tilde{p}_0\}$.
  \end{enumerate}
  The same statements hold for solutions of \eqref{eq:eikchartilde}.
\end{lem}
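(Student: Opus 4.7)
The proof reduces to classical ODE theory for the Hamiltonian system \eqref{eq:eikchar}. From the explicit formula \eqref{eq:defm2star} and the exponential decay of $1-\ell^2$ in $|x_1|$, the function $(m_2^*)^2$ is smooth on $\overline\Omega$ with bounded derivatives of every order; hence the vector field $F(x) = -\sin^2\theta\,\nabla(m_2^*)^2(x)$ satisfies $\|F\|_{L^\infty(\Omega)} + \|DF\|_{L^\infty(\Omega)} \leq C\sin^2\theta$, and Picard--Lindel\"of produces a unique maximal smooth solution depending smoothly on the initial data. Assertion (ii) is then immediate from $p(t) - p_0 = \int_0^t F(x(s))\,ds$, which gives $|p(t) - p_0| \leq C\sin^2\theta\,|t|$; combined with (i) this is the claimed bound.

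For (i), I will run a bootstrap on $p_3$. The hypothesis $|p_0 + \mathbf{e}_3| \leq 1/2$ gives $p_{0,3} \leq -1/2$, so on any interval where $|p(t) - p_0| \leq 1/4$ one has $p_3(t) \leq -1/4$ and therefore $\dot x_3 = 2p_3 \leq -1/2$. Thus $x_3$ decreases at rate at least $1/2$ and must reach $\pm 1$ within time at most $4$. Over such a window the bound from the previous paragraph yields $|p(t) - p_0| \leq 4C\sin^2\theta < 1/4$ for $\theta$ small, closing the bootstrap and giving $|T_t| + |T_b| \leq C$.

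Item (iii) is the main technical point. The upper bound is routine: writing $x(t) - \tilde x(\tilde t) = (x(t)-\tilde x(t)) + (\tilde x(t)-\tilde x(\tilde t))$, a Gronwall argument on the coupled system $\dot{\delta x} = 2\delta p$, $\dot{\delta p} = DF(\xi)\,\delta x$ with $\|DF\|_\infty \leq C\sin^2\theta$ and $|t|,|\tilde t| \leq C$ gives $|\delta x(t)| + |\delta p(t)| \leq C(|\delta x_0| + |\delta p_0|)$, while $|\tilde x(t) - \tilde x(\tilde t)| \leq C|t - \tilde t|$. For the lower bound --- the central difficulty --- I plan to linearize,
\begin{align*}
x(t) - \tilde x(\tilde t) = (x_0 - \tilde x_0) + 2t\,(p_0 - \tilde p_0) + 2\tilde p_0\,(t - \tilde t) + R,
\end{align*}
with $|R| \leq C\sin^2\theta\bigl(|x_0 - \tilde x_0| + |t - \tilde t|\bigr)$ by (ii), and then decompose along and orthogonal to $p_0$. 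The angle hypothesis forces the perpendicular component of $x_0 - \tilde x_0$ to have size at least $\sqrt{1-q^4}\,|x_0 - \tilde x_0|$; the assumption $|p_0 - \tilde p_0| \leq |x_0 - \tilde x_0|/C(q)$ makes $2t(p_0 - \tilde p_0)$ a controllable fraction of this perpendicular contribution once $C(q)$ is large; and the parallel term $2\tilde p_0(t-\tilde t)$ has magnitude comparable to $|t-\tilde t|$ since $|\tilde p_0| \geq 1/2$ (which follows from $|p_0+\mathbf{e}_3| \leq 1/2$ and $|p_0 - \tilde p_0|$ small). Choosing $C(q)$ sufficiently large and $\theta$ sufficiently small absorbs $R$ into a definite fraction of these leading terms, which yields both sides of the double inequality. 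The same argument transfers verbatim to \eqref{eq:eikchartilde} because $\tilde F = \nabla|\nabla\tilde\psi|^2$ enjoys the same size and regularity estimates as $F$.
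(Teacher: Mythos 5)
Your overall strategy coincides with the paper's: standard ODE theory for existence, uniqueness and smooth dependence; the trivial integral bound $|p(t)-p_0|\leq C\sin^2\theta\,|t|$ for (ii); a uniform-in-$\theta$ time bound for (i); and, for (iii), a linearization of $x(t)-\tilde x(\tilde t)$ followed by an angle-based lower bound. Your bootstrap for (i) is a cosmetic variant of the paper's direct estimate $x_{0,3}-x_3(t)\geq t\bigl(1-C\sin^2\theta\,t\bigr)$, and both work.

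There is, however, a genuine gap in the lower bound of (iii). You claim $|R|\leq C\sin^2\theta\,(|x_0-\tilde x_0|+|t-\tilde t|)$ ``by (ii)''. Item (ii) only gives $|p(s)-p_0|\leq C\sin^2\theta$ and $|\tilde p(s)-\tilde p_0|\leq C\sin^2\theta$; since $R=2\int_0^t(p(s)-p_0)\,ds-2\int_0^{\tilde t}(\tilde p(s)-\tilde p_0)\,ds$, what follows from (ii) alone is only $|R|\leq C\sin^2\theta\,(|t|+|\tilde t|)\leq C'\sin^2\theta$, a bound that does \emph{not} degenerate as $|x_0-\tilde x_0|+|t-\tilde t|\to 0$. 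For nearby initial data and times, $R$ would then dominate the leading terms and the lower bound would fail. The correct estimate needs cancellation: taking w.l.o.g.\ $0\leq t\leq\tilde t$, split $R=2\int_0^t\int_0^s\bigl[F(x(\sigma))-F(\tilde x(\sigma))\bigr]\,d\sigma\,ds-2\int_t^{\tilde t}\int_0^s F(\tilde x(\sigma))\,d\sigma\,ds$. The second piece is $\leq C\sin^2\theta\,|t-\tilde t|$ by $\|F\|_\infty\leq C\sin^2\theta$ and (i). The first piece requires $\|DF\|_\infty\leq C\sin^2\theta$ combined with the Gronwall-type bound $\sup_s|x(s)-\tilde x(s)|\leq C\bigl(|x_0-\tilde x_0|+|p_0-\tilde p_0|\bigr)$ (which you derived for the upper bound) and the hypothesis $|p_0-\tilde p_0|\leq|x_0-\tilde x_0|/C(q)$, giving $\leq C\sin^2\theta\,|x_0-\tilde x_0|$. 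This is exactly the paper's route; once the ``by (ii)'' is replaced by this argument, your plan closes.
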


\begin{proof}
  We denote by $0<C<\infty$ a universal, generic constant. Note that we only treat the case \eqref{eq:eikchar}, since the argument for \eqref{eq:eikchartilde} is similar. In fact, the only property that we actually need is $F = \co(\theta^2) = \tilde{F}$.

  Existence of a unique solution $(x,p)$ of \eqref{eq:eikchar} that depends smoothly on time and initial data is immediate by standard theory, see e.g. \cite[Ch. 2, Cor. 6]{arnoldode92}. 

  Integrating the second line of \eqref{eq:eikchar} obviously yields
  \begin{align}
    \lvert p(t) -p_0 \rvert &\leq C\sin^2\theta \, \lvert t \rvert\label{eq:estp},
  \end{align}
  if the solution exists on $[0,t)$. In view of the structure of \eqref{eq:eikchar}, this already rules out finite-time blow-up.

  Hence, solutions exist until $x(t)$ leaves $\bar{\Omega}$ and we may estimate for $t>0$
  \begin{align*}
    x_{0,3} - x_3(t) &\stackrel{\mathclap{\eqref{eq:eikchar}}}{=} - 2\int_0^t\! p_{0,3}\, ds- 2\int_0^t\! p_3(s)-p_{0,3} \, ds \\
    &\stackrel{\mathclap{\lvert p_0+\mathbf{e}_3\rvert \leq \frac{1}{2} }}{\geq} \quad t - Ct \sup_{0\leq s\leq t} \lvert p(s) - p_0 \rvert\\
    &\stackrel{\mathclap{\eqref{eq:estp}}}{\geq} t \bigl( 1 - C \sin^2\theta \, t \bigr).
  \end{align*}
  Thus if $\theta$ is sufficiently small, e.g. such that $C\sin^2\theta\leq \tfrac{1}{8}$, we have
  \begin{align*}
    1\geq -x_3(t) \geq \tfrac{t}{8} \bigl( 8 - t \bigr) - x_{0,3} \geq \tfrac{t}{8} \bigl( 8 - t \bigr) - 1,
  \end{align*}
  and therefore $0\leq T_\text{b}\leq 4$. A similar argument shows that $0\leq -T_\text{t} \leq 4$.

  This proves statement (i) of Lemma~\ref{lem:propeik1}. In particular, \eqref{eq:estp} improves to statement (ii).

  We now address statement (iii): Assume $\tilde{t}\geq t \geq 0$. In the remaining cases, the proof is similar (using that $\lvert t \rvert + \lvert \tilde{t} \rvert \leq 2 \lvert t-\tilde{t} \rvert$ if $t\tilde{t}\leq 0$). Successively employing \eqref{eq:eikchar}, one computes
  \begin{gather*}
    \begin{aligned}
      x(t) - \tilde{x}(\tilde{t}) &= (x_0 - \tilde{x}_0) - 2\tilde{p}_0 (\tilde{t}-t) + 2(p_0-\tilde{p}_0) \, t\\
      &\quad+2\int_0^t \int_0^s \underbrace{F\bigl(x(s)\bigr) - F\bigl(\tilde{x}(s)\bigr)}_{\lvert\cdot \rvert \leq C\sin^2\theta \, \lvert x(s) - \tilde{x}(s) \rvert} \, d\sigma\,ds\\
      &\quad-2\int_t^{\tilde{t}} \int_0^s F\bigl(\tilde{x}(s)\bigr) d\sigma \, ds.
    \end{aligned}
  \end{gather*}
  With help of (i) this yields the estimate
  \begin{gather}\label{eq:errestx}
    \begin{aligned}
      \MoveEqLeft \bigl\lvert \bigl(x(t) - \tilde{x}(\tilde{t})\bigr) - (x_0-\tilde{x}_0) + 2\tilde{p}_0 (\tilde{t}-t) \bigr\rvert\\
      &\leq C \sin^2\theta  \bigl(\lvert t-\tilde{t} \rvert + \sup_{s} \lvert x(s) - \tilde{x}(s) \rvert\bigr) + C \lvert p_0 - \tilde{p}_0 \rvert.
    \end{aligned}
  \end{gather}
  By the triangle inequality and \eqref{eq:errestx} in the case $t=\tilde{t}$ we derive 
  \begin{gather*}
    \sup_{s} \lvert x(s) - \tilde{x}(s) \rvert \leq C \bigl( \lvert x_0 - \tilde{x}_0 \rvert + \lvert p_0 - \tilde{p}_0\rvert \bigr),
  \end{gather*}
  such that \eqref{eq:errestx} turns into 
  \begin{gather}\label{eq:errestximp}
    \begin{aligned}
      \MoveEqLeft \bigl\lvert \bigl(x(t) - \tilde{x}(\tilde{t})\bigr) - (x_0-\tilde{x}_0) + 2\tilde{p}_0 (\tilde{t}-t) \bigr\rvert\\
      &\leq C \sin^2\theta  \bigl(\lvert t-\tilde{t} \rvert + \lvert x_0 - \tilde{x}_0\rvert \bigr) + C \lvert p_0 - \tilde{p}_0 \rvert.
    \end{aligned}
  \end{gather}
  The upper bound of statement (iii) now easily follows from the triangle inequality and \eqref{eq:errestximp}.

  For the lower bound note that Young's inequality and the assumptions yield:
  \begin{align*}
    \bigl\lvert (x_0 - \tilde{x}_0 ) - 2 \tilde{p}_0 (\tilde{t}-t) \bigr\rvert^2 &\geq (1-q^2) \lvert x_0 - \tilde{x}_0 \rvert^2 + (1-q^2) \lvert \tilde{t}-t \rvert^2 \smash{\underbrace{4\lvert p_0 \rvert^2}_{\geq 1}}.
  \end{align*}
  Thus by the triangle inequality and \eqref{eq:errestximp}, we can estimate
  \begin{align*}
    \tfrac{1}{2}\sqrt{1-q^2} \bigl(\lvert x_0 - \tilde{x}_0 \rvert + \lvert t- \tilde{t} \rvert \bigr) \leq \lvert x(t)-\tilde{x}(\tilde{t}) \rvert + C\sin^2\theta \bigl( \lvert t- \tilde{t} \rvert + \lvert x_0 -\tilde{x}_0 \rvert \bigr) + C \lvert p_0  - \tilde{p}_0 \rvert.
  \end{align*}
  In view of the assumptions $\theta \leq \tfrac{1}{C(q)}$ and $\lvert p_0 - \tilde{p}_0 \rvert \leq \tfrac{\lvert x_0-\tilde{x}_0 \rvert}{C(q)}$ it remains to choose $C(q)$ sufficiently large to absorb the second and third terms on the right hand side into the left hand side and conclude the lower bound of (iii) in Lemma~\ref{lem:propeik1}.
\end{proof}

\medskip
\begin{lem}\label{lem:propeik2}
  Let $\gamma \subset \bar{\Omega}$ be the graph of a smooth function $f \colon [a,b] \subset\R \to [-1,1]$, and let $p_0=p_0(x_{0,1})\in\R^2$ be a smooth function of $x_{0,1}\in[a,b]$ with $\lVert \frac{d}{dx_1} p_0 \rVert_{\infty} \leq \frac{1}{C(q)}$ for $q^2=\tfrac{7}{9}$ if $\lVert f' \rVert_\infty < 1$, and $q^2=\frac{1+2\lVert f' \rVert_\infty^2}{2+2\lVert f' \rVert_\infty^2}<1$ otherwise (cf. previous lemma). Moreover, assume $\lvert p_0 + \mathbf{e}_3 \rvert \leq \tfrac{1}{4}$ and 
  \begin{align*}
    \lvert p_{0,1} \rvert \leq \frac{\lvert p_{0,3} \rvert}{4\lVert f' \rVert_\infty} \quad \forall x_{0,1}\in[a,b].
  \end{align*}
  Denote for $\theta$ sufficiently small by $(x^a,p^a)$ and $(x^b,p^b)$ the non-intersecting characteristics solving \eqref{eq:eikchar} corresponding to the initial data $\bigl((a,f(a)),p_0\bigr)$ and $\bigl( (b,f(b) ), p_0 \bigr)$, respectively. Let $M$ be the bounded subdomain of $\Omega$ that is bounded by the curves $x^a$ and $x^b$.

  Then for each point $x_1 \in M$ there exists an $x_0\in\gamma$ such that the characteristic $(x,p)$ corresponding to the initial datum $(x_0,p_0)$ passes through $x_1$, i.e. M is covered by characteristics.

  The same statements hold for solutions of \eqref{eq:eikchartilde}.
\end{lem}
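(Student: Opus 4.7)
My plan is to realize $M$ as the image of the characteristic flow issued from $\gamma$, so that the statement reduces to showing that a natural parametrization map is surjective. For $s\in[a,b]$, let $T_{\mathrm t}(s)$ and $T_{\mathrm b}(s)$ be the escape times provided by Lemma~\ref{lem:propeik1}(i) for the characteristic starting at $(s,f(s))$ with momentum $p_0(s)$, and set
\begin{align*}
N := \{(s,t)\in\R^2 \with a\leq s\leq b,\ T_{\mathrm t}(s)\leq t\leq T_{\mathrm b}(s)\}.
\end{align*}
By smooth dependence on initial data, $N$ is a compact topological disk bounded by four smooth arcs. Define $\Phi\colon N\to\overline{\Omega}$ by $\Phi(s,t):=x(t;(s,f(s)),p_0(s))$. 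The lemma is equivalent to $M\subseteq\Phi(N)$.

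First I would prove that $\Phi$ is injective by verifying the hypotheses of Lemma~\ref{lem:propeik1}(iii) for any pair of initial data $x_0=(s,f(s))$ and $\tilde x_0=(\tilde s,f(\tilde s))$ on $\gamma$. The Lipschitz bound $\lvert p_0(s)-p_0(\tilde s)\rvert\leq \lVert\tfrac{d}{dx_1}p_0\rVert_\infty\lvert s-\tilde s\rvert\leq \lvert x_0-\tilde x_0\rvert/C(q)$ is immediate from the hypothesis on $p_0$. For the angular condition, writing $x_0-\tilde x_0=(s-\tilde s)(1,f'(\zeta))$ and using $\lvert p_{0,1}\rvert\leq\lvert p_{0,3}\rvert/(4\lVert f'\rVert_\infty)$ together with $\lvert p_0+\mathbf e_3\rvert\leq\tfrac14$, an elementary computation bounds $\lvert(x_0-\tilde x_0)\cdot\xi\rvert$ by $q^2\lvert x_0-\tilde x_0\rvert\lvert\xi\rvert$ for both $\xi\in\{p_0(s),p_0(\tilde s)\}$, with exactly the two regimes for $q$ given in the statement. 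Lemma~\ref{lem:propeik1}(iii) then yields the bi-Lipschitz estimate
\begin{align*}
\lvert\Phi(s,t)-\Phi(\tilde s,\tilde t)\rvert \geq \tfrac{1}{C(q)}\bigl(\lvert s-\tilde s\rvert+\lvert t-\tilde t\rvert\bigr),
\end{align*}
so $\Phi$ is injective on $N$.

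Next I would record the boundary behaviour of $\Phi$. By Lemma~\ref{lem:propeik1}(i) the arc $\{t=T_{\mathrm t}(s)\}$ is mapped into $\{x_3=1\}$ and the arc $\{t=T_{\mathrm b}(s)\}$ into $\{x_3=-1\}$, while by definition $\Phi(a,\cdot)=x^a$ and $\Phi(b,\cdot)=x^b$. Thus $\Phi(\partial N)\subseteq\partial M$, and by injectivity $\Phi\vert_{\partial N}$ is a homeomorphism of the Jordan curve $\partial N$ onto $\partial M$. Finally, since $\Phi$ is a continuous injection from the compact set $N$ into $\R^2$, it is a homeomorphism onto its image; invariance of domain makes $\Phi(\mathrm{int}\,N)$ open in $\R^2$, disjoint from $\partial M$ and contained in $\overline M$. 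Combined with compactness of $\Phi(N)$ and the Jordan curve theorem applied to $\partial M$, this forces $\Phi(\mathrm{int}\,N)=M$, hence $M\subseteq\Phi(N)$. The argument transfers verbatim to the flow \eqref{eq:eikchartilde}, since only the qualitative and quantitative properties of characteristics supplied by Lemma~\ref{lem:propeik1} enter the proof.

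The main technical obstacle I anticipate is the angle calibration in the injectivity step. The two distinct values of $q$ in the hypotheses arise from whether the nearly-vertical direction of the momenta or the tangent direction $(1,f'(\zeta))$ of $\gamma$ dominates the worst-case inner product between $x_0-\tilde x_0$ and $\xi\in\{p_0,\tilde p_0\}$; both regimes must be handled uniformly so that Lemma~\ref{lem:propeik1}(iii) can be applied with a single constant $C(q)$, which is precisely the role of the technical bound $\lvert p_{0,1}\rvert\leq\lvert p_{0,3}\rvert/(4\lVert f'\rVert_\infty)$.
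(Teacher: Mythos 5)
Your proof is correct and its skeleton matches the paper's: parametrize $M$ by the characteristic flow from $\gamma$, verify the hypotheses of Lemma~\ref{lem:propeik1}(iii) --- the Lipschitz bound on $p_0$ plus the improved Cauchy--Schwarz angle estimate split into the two regimes for $\lVert f'\rVert_\infty$ --- to obtain non-crossing of characteristics, and then close by a topological surjectivity argument. The one genuine divergence is in the final step: the paper proves surjectivity by a degree/no-retraction argument (since $\Psi(\partial D)=\partial M$ and $\Psi\vert_{\partial D}$ has topological degree $1$, if $\Psi(D)\subsetneq M$ one could build a retraction of the disk onto its boundary circle, which is impossible), whereas you invoke injectivity of $\Phi$ together with Brouwer's invariance of domain to show that $\Phi(\mathrm{int}\,N)$ is simultaneously open, closed, and nonempty in the connected set $M$, and therefore equals $M$. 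Both closings are standard and valid; the paper's degree argument is marginally leaner since it only needs the boundary map's degree and not interior injectivity, while your route makes the homeomorphism structure of $\Phi$ explicit --- neither constitutes a gap.
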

\begin{proof}
  First of all, we remark that Lemma~\ref{lem:propeik1} (iii) is applicable and in particular the characteristics $x$ do not intersect. Indeed, by smoothness of $p_0$ we have $\lvert p_0(x_{0,1}) - p_0(\tilde{x}_{0,1}) \rvert \leq \lVert \frac{d}{dx_1} p_0 \rVert_{\infty} \lvert x_0 - \tilde{x}_0 \rvert$. Moreover, due to the assumptions, an improved Cauchy-Schwarz inequality holds: By definition of $x_0=(x_{0,1},f(x_{0,1}))$ and monotonicity of $z\mapsto \frac{a+bz}{\sqrt{1+z^2}}$, for $az\leq b$, $a,b,z\geq 0$, we have for $\xi \in \{p_0(x_0),p_0(\tilde{x_0})\}$:
  \begin{align*}
    \MoveEqLeft \frac{\lvert(x_0-\tilde{x}_0)\cdot \xi \rvert}{\sqrt{\lvert x_{0,1} - \tilde{x}_{0,1} \rvert^2 + \lvert f(x_{0,1})-f(\tilde{x}_{0,1})\rvert^2}}
    \leq \frac{\lvert\xi_1\rvert + \lvert\xi_3\rvert \lvert\frac{f(x_{0,1})-f(\tilde{x}_{0,1})}{x_{0,1}-\tilde{x}_{0,1}} \rvert}{\sqrt{1 + \lvert \frac{f(x_{0,1})-f(\tilde{x}_{0,1})}{x_{0,1}-\tilde{x}_{0,1}}\rvert^2}}
    \leq \frac{\lvert\xi_1\rvert + \lvert\xi_3\rvert \lVert f' \rVert_\infty}{\sqrt{1 + \lVert f' \rVert_\infty^2}}
  \end{align*}
  In the case $\lVert f' \rVert_\infty < 1$ we use $\lvert \xi_1 \rvert \leq\tfrac{1}{4} \leq\tfrac{1}{3} \lvert \xi_3 \rvert$, which implies
  \begin{align*}
    \Bigl(\tfrac{\lvert\xi_1\rvert + \lvert\xi_3\rvert \lVert f' \rVert_\infty}{\sqrt{1 + \lVert f' \rVert_\infty^2}}\Bigr)^2 \leq \lvert \xi_3 \rvert^2 \tfrac{(\frac{1}{3}+\lVert f' \rVert_\infty)^2}{1+\lVert f' \rVert_\infty^2}= \lvert \xi_3 \rvert^2 \tfrac{1+\lVert f' \rVert_\infty^2-\frac{8}{9} + \frac{2}{3}\lVert f' \rVert_\infty}{1+\lVert f' \rVert_\infty^2} = \lvert \xi_3 \rvert^2 (1 - \tfrac{8}{9}+\tfrac{2}{3}) = \tfrac{7}{9} \lvert \xi \rvert^2.
  \end{align*}
  For $\lVert f' \rVert_\infty \geq 1$ the improved Cauchy-Schwarz inequality follows from the assumption $4\lVert f' \rVert_\infty \lvert \xi_1 \rvert \leq \lvert \xi_3\rvert$:
  \begin{align*}
    \Bigl(\lvert\xi_1\rvert + \lvert\xi_3\rvert \lVert f' \rVert_\infty\Bigr)^2 \leq \lvert \xi_1 \rvert^2 + (\tfrac{1}{2} + \lVert f' \rVert_\infty^2)  \lvert \xi_3 \rvert^2 \leq (\tfrac{1}{2} + \lVert f' \rVert_\infty^2) \lvert \xi \rvert^2.
  \end{align*}

  We now give a topological argument to prove that the characteristics cover $M$: Denote by $\Psi \colon D \to M$ the continuous map $(t,s) \mapsto x(s)$, where $(x,p)$ is the characteristic corresponding to the initial datum $\bigl( x_0,p_0(x_0)\bigr)$, $x_0=(t,f(t))\in\gamma$. Both the domain $D=\bigcup_{t\in[a,b]} \{t\} \times [T^\text{t}(t),T^\text{b}(t)]$ of $\Psi$ and $M$ are homeomorphic to a disk. Moreover, $\Psi(\partial D)=\partial M$, and the restriction $\Psi\big\vert_{\partial D}$ has topological degree $1$. Therefore $\Psi$ is surjective.

  Indeed, if $R:=\Psi(D)\subsetneq M$, $\Psi$ would induce a continuous map $g \colon \mathbb{D}^2 \to \mathbb{S}^1$ with $g\big\vert_{\mathbb{S}^1} \simeq \id_{\mathbb{S}^1}$, i.e. the sphere were a retract of the disk, which is impossible. Hence $R=M$, and Lemma~\ref{lem:propeik2} is proven.
\end{proof}

\textbf{Acknowledgments}: The authors thank Felix Otto for his strong support and very helpful discussions and suggestions. LD acknowledges the hospitality of the universities Paris-Sud and Paul Sabatier and the MPI MIS, as well as support of its IMPRS. RI gratefully acknowledges the hospitality of the MPI MIS, where part of this work was carried out; he also acknowledges partial support by the ANR project ANR-14-CE25-0009-01.

\bibliography{references}
\bibliographystyle{abbrv}
\end{document}